\renewcommand{\vec}[1]{\mathbf{#1}}
\newcommand{\vu}{\bm{u}}
\newcommand{\va}{\bm{a}}
\newcommand{\ds}{\displaystyle}
\newcommand{\vs}{\vspace{0.5ex}}
\theoremstyle{definition}
\theoremstyle{remark}
\newtheorem{theorem}{Theorem}
\newtheorem{lemma}[theorem]{Lemma}
\begin{document}
	
\title{Balancing Fairness and Efficiency in an Optimization Model}
	
\author{Violet (Xinying) Chen and J. N. Hooker \\ Carnegie Mellon University \\ \texttt{xinyingc@andrew.cmu.edu, jh38@andrew.cmu.edu}}
	
\date{June 2020}
	
\maketitle
	
\begin{abstract}
Optimization models generally aim for efficiency by maximizing total benefit or minimizing cost.  Yet a trade-off between fairness and efficiency is an important element of many practical decisions.  We propose a principled and practical method for balancing these two criteria in an optimization model. Following a critical assessment of existing schemes, we define a set of social welfare functions (SWFs) that combine Rawlsian leximax fairness and utilitarianism and overcome some of the weaknesses of previous approaches.  In particular, we regulate the equity/efficiency trade-off with a single parameter that has a meaningful interpretation in practical contexts.  We formulate the SWFs using mixed integer constraints and sequentially maximize them subject to constraints that define the problem at hand.  After providing practical step-by-step instructions for implementation, we demonstrate the method on problems of realistic size involving healthcare resource allocation and disaster preparation.  The solution times are modest, ranging from a fraction of a second to 18 seconds for a given value of the trade-off parameter. 
\end{abstract}

\section{Introduction}
	


Fairness is an important consideration across a wide range of optimization models.  It can be a central issue in health care provision, disaster planning, workload allocation, public facility location, telecommunication network management, traffic signal timing, and many other contexts.  While it is normally straightforward to formulate an objective function that reflects efficiency or cost, it is not obvious how to express fairness in mathematical form.  When both fairness and efficiency are desired, as is typical in practice, there is the additional challenge of mathematically integrating them in a tractable model.  

We undertake in this paper to develop a practical and yet principled approach to balancing fairness and efficiency that can be implemented in a mixed \mbox{integer}/linear programming (MILP) model.  We present the underlying theory as well as practical guidelines for implementation.  In particular, we provide step-by-step instructions for incorporating our formulation into an existing model (Section~\ref{sec:guide}). 

Our guiding intuition is that a scheme that balances equity and efficiency should give some degree of priority to parties in greatest need, but not at unlimited cost.  When a natural disaster brings down the electric power grid, crisis managers may dispatch crews to urban areas first in order to restore power to more households quickly, thus maximizing efficiency. Yet this may cause rural areas to experience very long blackouts, which could be seen as unfair.  
A more satisfactory solution might give some amount of priority to rural customers, but without imposing too much harm on the population as a whole.  Similarly, traffic signal timing that minimizes total delay may result in impracticably long wait times for traffic on minor streets that cross a main thoroughfare.  Again a balance between equity and efficiency may be desirable.  The issue can be especially acute in health care.  Expensive treatments or research programs that prolong the life of a relatively few gravely ill patients may divert funds from preventive health measures that would spare thousands the suffering brought by less serious diseases.  The obvious question in such cases is how to regulate the trade-off between fairness and efficiency.  We propose an approach that is arguably more easily interpreted and implemented in a practical optimization model than other schemes that have been proposed.


While there are many possible measures of fairness, we choose a criterion based ultimately on John Rawls' concept of justice-as-fairness (\citeauthor{Raw99} \citeyear{Raw99}).  One consequence of the Rawlsian analysis is his famous difference principle, which states roughly that a fair distribution of resources is one that maximizes the welfare of the worst-off.  Rawls defends the principle with a social contract argument that can be plausibly extended to lexicographic maximization.  That is, the welfare of the worst-off is first maximized subject to resource constraints, whereupon the welfare of the second worst-off is maximized while holding that of the worst-off fixed, and so forth.  We choose a lexicographic formulation because it provides a more nuanced measure of fairness than the maximin criterion, and because the Rawlsian perspective
has been defended by closely reasoned philosophical arguments in a vast literature (\citeauthor{RicWei99} \citeyear{RicWei99}, \citeauthor{Fre03} \citeyear{Fre03}).  Our aim is to combine this fairness criterion with an efficiency measure in a principled and practical fashion.

The Rawlsian argument goes roughly as follows.  Let's suppose that all concerned parties adopt an agreed-upon social policy in an original position behind a ``veil of ignorance'' as to their identity.  It must be a policy that all parties can rationally accept upon learning who they are.  Rawls argues that no rational decision maker will accept a policy in which she is the least advantaged, unless she would have been even worse off under any other policy.  A fair outcome should therefore maximize the welfare of the worst-off.  The argument can be employed recursively to defend a leximax criterion.  Rawls intended his principle to apply only to the design of social institutions, and to pertain only to the distribution of ``primary goods,'' which are goods that any rational person would want.  Yet it can be plausibly extended to distributive justice in general, particularly if it is appropriately combined with an efficiency criterion.  

Fairness can be incorporated into an optimization model by using a {\em social welfare function} (SWF) as the objective function.  The SWF is a function of the utility levels allocated to affected parties, where utility can be defined as profit, negative cost, or any other measure of benefit that suits the context.  The SWF assigns higher values to more desirable utility distributions, and the optimization problem is to maximize the SWF subject to problem constraints.  The goal is to develop a SWF that reflects both fairness and efficiency. 

Our contribution is threefold.  We begin by assessing, from the perspective of an optimization modeler, several existing approaches to balancing fairness and efficiency in a SWF.  We then propose a scheme that not only has a tractable MILP formulation but, in our view, overcomes several weaknesses of existing methods.  In particular, we address a problem that nearly all of them share: a failure to parameterize the trade-off between efficiency and fairness in a fashion that can be understood and applied in practice.  Finally, we conduct a thorough analysis of the mathematical properties of the formulation proposed here.

One modeling scheme that offers a potentially appealing approach to the fairness/efficiency trade-off is that of \citeauthor{HooWil12} (\citeyear{HooWil12}).  Their SWF contains a single parameter $\Delta$ that has the same units as utility and can be related naturally to the problem at hand.  The value of $\Delta$ is chosen so that parties whose utility is within $\Delta$ of the lowest are seen as sufficiently disadvantaged to deserve priority.  This yields a utilitarian criterion when $\Delta=0$ and a pure maximin criterion for sufficiently large $\Delta$, with intermediate values of $\Delta$ injecting the fairness criterion to varying degrees.  


Despite its attractive features, the Hooker-Williams scheme has shortcomings.  One is that the actual utility levels of disadvantaged parties other than the very worst-off have no bearing on social welfare.  The SWF imposes no penalty for reducing their utility levels to that of the worst-off.  This leads to solutions that may not adequately account for equity considerations in real applications.  

We address this problem by combining efficiency with a lexicographic criterion rather than a maximin criterion.  This allows the utility levels of all disadvantaged parties to factor into social welfare.  It also results in a more sophisticated interpretation of the trade-off parameter $\Delta$.  However, it poses the difficult challenge of designing an SWF that yields a practical optimization model.  We meet the challenge by maximizing a series of SWFs rather than a single function.  We show how to incorporate each SWF as the objective function of the given problem using MILP constraints.  We then obtain a socially optimal solution by solving the resulting sequence of optimization models.  

The paper is organized as follows.  We begin in Section~\ref{sec:lit} with an assessment of the primary existing schemes for combining equity and efficiency in a SWF.  These include convex and other combinations of utility with such equity criteria as the Gini cofficient and the Rawlsian difference principle.  We examine $\alpha$-fairness, along with proportional fairness (also known as a the Nash bargaining solution) as a special case, followed by an evaluation of the the Kalai-Smorodinsky bargaining solution.  We then take up the Hooker-Williams scheme as well as a proposal for combining utility with a lexicographic criterion.   

In an effort to address some of the shortcomings of these methods, we next develop our approach to combining leximax fairness and utilitarianism. An earlier version of this scheme is briefly described in a conference paper (\citeauthor{CheHoo20} \citeyear{CheHoo20}).  In Section \ref{sec:defSWF} below, we define SWFs that can be sequentially maximized, subject to the constraints of the given problem, to obtain a socially optimal solution for a specified tradeoff parameter $\Delta$.  We study the mathematical properties of these SWFs in Section~\ref{sec:PD} and describe the sequential optimization procedure more precisely in Section~\ref{sec:optimization}.  A key element of our proposal is a set of practical MILP models for these optimization problems, which we describe and validate in Section~\ref{sec:MILP}.  This is followed in Section~\ref{sec:sharpness} with a family of valid inequalities that can be added to tighten the models.  Real applications often require utility distribution to groups rather than individuals, such as organizations, regions, or demographic groups.  We therefore generalize our method in Section~\ref{sec:group} to deal with groups having specified sizes.    

Section~\ref{sec:guide} provides step-by-step practical guidelines for incorporating our modeling approach into an existing optimization model.  Practitioners may wish to skip directly to this section, which is intended to be as self-contained as possible.  In Section~\ref{sec:exp}, we demonstrate the practical applicability of our approach  by applying it to a healthcare resource allocation problem and an emergency preparedness problem.  The former allows us to compare results with those reported by \citeauthor{HooWil12} (\citeyear{HooWil12}) on the same problem.  The latter is a shelter location and assignment problem of realistic size.  We find that our approach yields reasonable and nuanced socially optimal solutions for both problems, with computation times ranging from a fraction of a second to 18 seconds for a given $\Delta$.  The paper wraps up with concluding remarks and two Appendices containing proofs of theorems that were not proved in the body of the paper.


\section{Schemes for Combining Equity and Efficiency} \label{sec:lit}

We begin by evaluating some of the primary alternatives to our proposed method for balancing equity and efficiency in an optimization model.  For a comprehensive review of equity/efficiency modeling, we refer the reader to Karsu and Morton (2015), who also survey a wide range of applications in which equity measures are used and describe research in the mathematical foundations of equity modeling.

We view each scheme for integrating equity and efficiency as proposing a social welfare function (SWF) $F(\vu)$, where $\vu=(u_1,\ldots,u_n)$ and $u_i$ is the amount of utility allocated to party $i$.  The function $F(\vu)$ appears as the objective function in an optimization model. The model contains resource limitations and other constraints imposed by the problem, and these define a feasible set of utility vectors $\vu$.  The SWF is then maximized over this feasible set.


\subsection{Fairness as Equality}

One approach to combining fairness and efficiency is to use equality as a surrogate for fairness.  This poses two questions: which measure of equality is appropriate, and how should it be combined with efficiency in an SWF?  We address the latter question in the next section.

While it is clear what it means for utilities to be equal, it is much less clear how to measure departures from equality.  A number of metrics have been proposed, including relative mean deviation, variance, coefficient of variation, McLoone index, Gini coefficient, Atkinson index, Hoover index, and the Theil index.  Many of these are discussed and analyzed by \citeauthor{Cow00} (\citeyear{Cow00}) and \citeauthor{JenVanKer11} (\citeyear{JenVanKer11}).  By far the most popular is the Gini coefficient, which is regularly used to gauge income and wealth inequality.  The Gini coefficient measures the relative mean difference between all pairs of utilities and can be defined
\begin{equation}
G(\vu) = \frac{\ds \sum_{i<j}|u_i-u_j|}{\ds n\sum_i u_i}
\label{eq:gini}
\end{equation}
Perfect equality corresponds to a Gini coefficient of 0, and perfect inequality (in which all the utility is allocated to one party) to a coefficient of 1.  One attractive property of the Gini coefficient is that it satisfies the {\em Pigou-Dalton condition}, often viewed as a desirable characteristic for inequality measures.  It states that transferring utility from a better-off party to a worse-off party should increase (or at least not decrease) social welfare.  A practical issue with the Gini coefficient and most other inequality metrics, however, is their nonlinearity, which could complicate solution of an optimization model.  If one wishes to minimize (\ref{eq:gini}) alone, the problem can be linearized using linear-fractional programming (\citeauthor{ChaCoo62} \citeyear{ChaCoo62}), but nonlinearity remains an issue when (\ref{eq:gini}) is combined with an efficiency measure.

A fundamental weakness of any equality-based surrogate for fairness is that equality is not the same concept as fairness (\citeauthor{Fra15} \citeyear{Fra15}).  While excessive inequality is  undesirable in a number of contexts (\citeauthor{Sca03} \citeyear{Sca03}), minimizing inequality is arguably not synonymous with maximizing fairness, because it gives no particular priority to the less advantaged (\citeauthor{Par97} \citeyear{Par97}).  It can equate a utility-neutral rearrangement of wealth that promotes equality among millionaires with a utility-neutral rearrangement that improves the lot of the very poor.  It is therefore at odds with the basic intuition we have adopted for this study, namely that fairness implies giving priority to those who need it most.   

\subsection{Convex Combinations}
We now move to the question of how a fairness measure can be combined with an efficiency measure. The most obvious alternative is to use a convex combination:
\[
F(\vu) = (1-\lambda)\sum_i u_i + \lambda \Phi(\vu)
\]
where $\Phi(\vu)$ is a fairness measure.  A perennial problem with convex combinations is that it is difficult to interpret $\lambda$, particularly when $\Phi(\vu)$ is measured in units other than utility.  For example, if we use the Gini coefficient $G(\vu)$ as a measure of inequity, then we must combine utility with a dimensionless quantity $\Phi(\vu)=1-G(\vu)$.  Larger values of $\lambda$ give greater weight to equality, but in a practical situation it is unclear how to attribute any meaning to a chosen value of $\lambda$. 

\citeauthor{EisTzu19} (\citeyear{EisTzu19}) use a product rather than a convex combination of utility and $1-G(\vu)$, which nicely reduces to an SWF that is easily linearized: 
\[
F(\vu) = \sum_i u_i - \frac{1}{n}\sum_{i<j} |u_j-u_i|
\]
Yet we now have a convex combination of total utility and another equality metric (one that is proportional to the negative mean absolute difference); in particular, it is a convex combination in which $\lambda=1/2$.  This may be reasonable for the intended application, but one may ask why this particular value of $\lambda$ is suitable, and whether other values should be used in other contexts.  Aside from this are the general issues raised by using equality as a surrogate for fairness.

One can combine utility with the Rawlsian maximin criterion by using the convex combination
\begin{equation}
F(\vu) = (1-\lambda)\sum_i u_i  + \lambda \min_i\{u_i\}\
\label{eq:convexComb1}
\end{equation}
This, at least, combines quantities that are measured in the same units.  Yet it is again unclear how to select a suitable value of $\lambda$.   Note that if we index utilities so that $u_1\leq\cdots\leq u_n$, (\ref{eq:convexComb1}) is simply a weighted sum $u_1 + (1-\lambda)\sum_{i>1}u_i$ that gives somewhat more weight to the lowest utility.  Yet how much more is appropriate?  
One can refine criterion (\ref{eq:convexComb1}) by giving gradually decreasing weights $w_1 > w_2 > \cdots > w_n$ to the utilities in an SWF of the form
\begin{equation}
F(\vu) = \sum_i w_iu_i  
\label{eq:convexComb2}
\end{equation}
Yet this only complicates the task of assigning weights.  In addition, since we do not know how to index the utilities by size in advance, we have the difficult modeling challenge of ensuring that weight $w_i$ is assigned to the $i$th smallest utility. 

Finally, one might attempt to combine efficiency with a leximax criterion by using suitable weights $w_i$ in (\ref{eq:convexComb2}).  \citeauthor{Ogryczak2006} (\citeyear{Ogryczak2006}) show which weights accomplish this for a purely leximax objective, but the weights tend to vary enormously in size, which introduces numerical instability.  In addition, it is unclear how to adjust the weights so as to incorporate efficiency into the objective function.

\subsection{$\alpha$-Fairness and Proportional Fairness}

A much-discussed family of SWFs known as $\alpha$-fairness have the form
\[
F_{\alpha}(\vu) = \left\{
\begin{array}{ll}
{\ds
\frac{1}{1-\alpha}\sum_i u_i^{1-\alpha}
} & \mbox{for} \; \alpha\geq 0, \; \alpha\neq 1 \vs \\
{\ds
\sum_i \log(u_i)
} & \mbox{for} \; \alpha=1
\end{array}
\right.
\]
These SWFs form a continuum that stretches from a utilitarian criterion ($\alpha=0$) to a maximin criterion as $\alpha\rightarrow\infty$. They can therefore be seen as blending equity and efficiency, with larger values of $\alpha$ implying a greater emphasis on equity.   \citeauthor{LanKaoChiSab2010} (\citeyear{LanKaoChiSab2010}) provide an axiomatic treatment of $\alpha$-fairness in the context of network resource allocation, and \citeauthor{Bertsimas2012} (\citeyear{Bertsimas2012}) study worst-case equity/efficiency trade-offs implied by this criterion.

The parameter $\alpha$ can be interpreted as quantifying the equity/efficiency trade-off, because utility $u_j$ must be reduced by $(u_j/u_i)^{\alpha}$ units to compensate for a unit increase in $u_i$ ($<u_j$) while maintaining constant social welfare.  This gives priority to less-advantaged parties, as we desire, with $\alpha$ indicating how much priority.  In particular, the fact that $(u_j/u_i)^{\alpha}>1$ when $u_i<u_j$ implies that $F_{\alpha}(\vu)$ satisfies the Pigou-Dalton condition for all $\alpha$.  Yet it is not obvious what kind of trade-off, and therefore what value of $\alpha$, is appropriate for a given application.  The underlying difficulty is that there is no apparent interpretation of $\alpha$ independent of its role in the SWF.  There is also the computational impediment that $F_{\alpha}(\vu)$ is nonlinear.
  
A well-known special case of $\alpha$-fairness arises when $\alpha=1$.  This results in proportional fairness, often measured by the product $\Pi_i u_i$ rather than its logarithm $\sum_i \log(u_i)$.  Maximizing proportional fairness yields the Nash bargaining solution (\citeauthor{Nas50} \citeyear{Nas50}), which should not be confused with the Nash equilibrium of game theory.  It corresponds to selecting a point $\vu$ in the feasible set that maximizes the volume of the hyperrectangle with opposite corners at $\vu$ and the origin.  This is illustrated in Fig.~\ref{fig:NashRKS}(a), where each point on the plot represents the utility outcomes for two parties that result from some distribution of resources.  The set of feasible utility vectors is the area under the curve.  The Nash bargaining solution is the black dot, which is the feasible point that maximizes the area of the shaded rectangle.  Proportional fairness is frequently used in engineering, such as for bandwidth allocation in telecommunication networks and traffic signal timing. 

\begin{figure}[!t]
	\centering
	\begin{tabular}{c@{\hspace{10ex}}c}
	\includegraphics[trim=120 490 320 120, clip, scale=0.9]{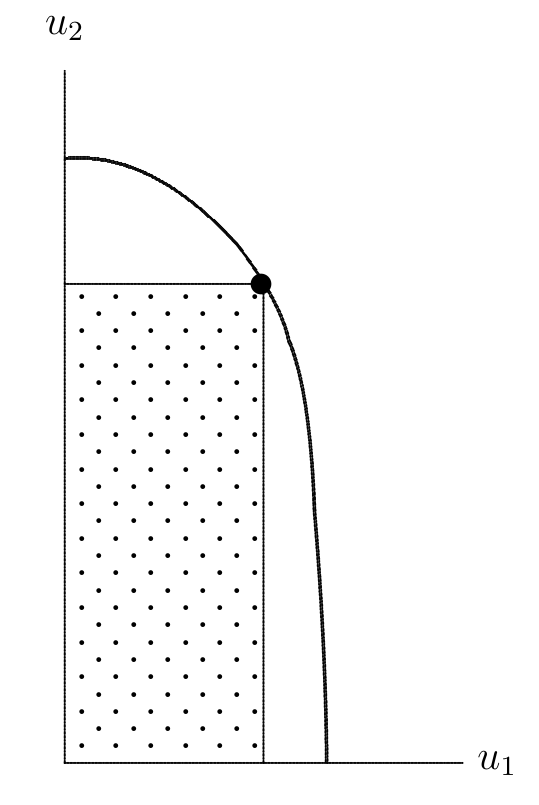} &
	\includegraphics[trim=120 490 320 120, clip, scale=0.9]{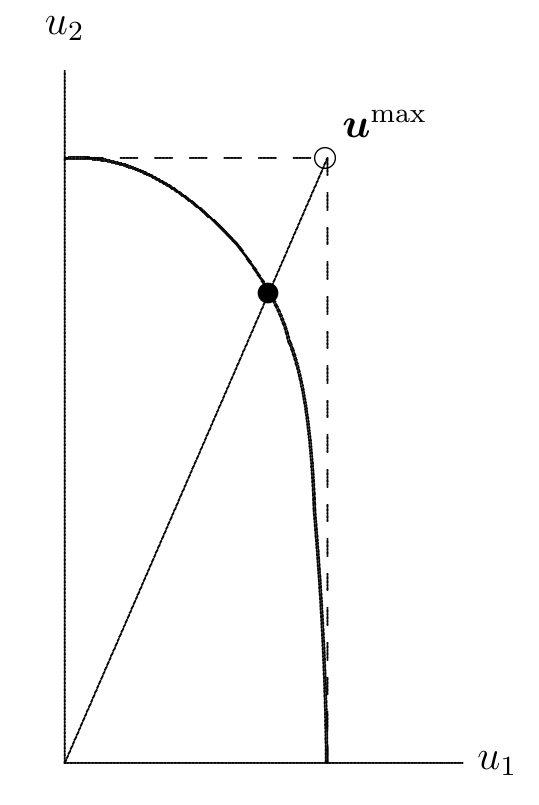} \\
	(a) & (b)
	\end{tabular}
	\caption{(a) Nash bargaining solution for two players.  (b) Kalai-Smorodinsky bargaining solution for two players.  In both cases, the default position is the origin.}
	\label{fig:NashRKS}
\end{figure}

Proportional fairness has axiomatic and bargaining-based derivations that might be seen as justifying the parameter setting $\alpha=1$.  For example, \citeauthor{Nas50} (\citeyear{Nas50}) showed that his bargaining solution for two persons is implied by a set of axioms for utility theory. \citeauthor{Har77} (\citeyear{Har77}), \citeauthor{Rub82} (\citeyear{Rub82}), and \citeauthor{BinRubWol86}\ (\citeyear{BinRubWol86}) showed that the Nash solution is the (asymptotic) outcome of certain rational bargaining procedures.  Yet the axiomatic derivation relies on a strong axiom of cardinal noncomparability across parties that is central to the proof.  The axiom assumes that the ranking of utility vectors is invariant under affine transformations of the form $\phi_i(u_i)=\beta_iu_i + \gamma_i$, which arguably rules out the kind of utility comparisons we need in order to assess fairness (\citeauthor{Hoo13} \citeyear{Hoo13}).  Furthermore, the bargaining theories assume that the parties begin with a default utility allocation $\bm{d}=(d_1,\ldots,d_n)$ on which they fall back if bargaining fails, as illustrated by our medical example in Section~\ref{sec:healthexp}.  The proportional fairness SWF then becomes $F(\vu)=\Pi_i(u_i-d_i)$.  An unfair starting point $\bm{d}$ could lead to an unfair outcome even under a  rational bargaining procedure, and even if we grant that rational bargaining from a fair starting point necessarily yields a fair outcome.  This weakens the bargaining argument for the fairness of the Nash solution in general.

\subsection{Kalai-Smorodinsky Bargaining}

The Kalai-Smorodinsky (K-S) bargaining solution was proposed as an alternative to the Nash bargaining solution (\citeauthor{KalSmo75} \citeyear{KalSmo75}).  It provides players the largest possible increase in utility relative to the maximum increase they could obtain if other players were disregarded, subject to the condition that the relative increase is the same for all players.  Increases in utility are measured with respect to the default utility allocation.  This scheme is sometimes described as minimizing each player's relative concession.

One motivation for the K-S criterion is that it maximizes total utility while maintaining fairness for all players, where fairness takes into account the fact that allocating utility to some players is more costly than to others.  The approach is defended by \citeauthor{Tho94} (\citeyear{Tho94}) and is arguably consistent with the contractarian ethical philosophy developed by \citeauthor{Gau87} (\citeyear{Gau87}).   

Mathematically, the objective is to find the largest scalar $\beta$ such that $\vu=(1-\beta)\bm{d} + \beta \vu^{\max}$ is a feasible utility vector, where each $u_i^{\max}$ is the maximum of $u_i$ over all feasible utility vectors $\vu$.  The bargaining solution is the vector $\vu$ that maximizes $\beta$.  Figure~\ref{fig:NashRKS}(b) illustrates the idea for two players when the default position $\bm{d}$ is the origin.  The K-S solution (black dot) is the highest point at which the diagonal line intersects the feasible set. 

Axiomatic justifications are given for the K-S solution by Kalai and Smorodinsky as well as by Thompson, but they again rely on a strong axiom of cardinal noncomparability.  A bargaining justification might be given by arguing that it is rational for each player to minimize relative concession, and repeated rounds of bargaining will lead under suitable conditions to an equilibrium in which their relative concessions are equal and minimized.  

Yet the K-S criterion leads to an anomalous situation when one player's utility gain is very expensive and therefore requires very large utility transfers from the other players (\citeauthor{Hoo13} \citeyear{Hoo13}).  This forces the overall utility gain to be very small in the K-S solution when much greater utility gains are possible.  In the extreme case where one player's utility cannot be increased at all unless other utilities are held constant, the K-S solution obliges all players to settle for the default position with no increase in utility.  That is, if $u_i^{\max}>d_i$ and $\max\{u_i\;|\; \vu\in S, \;u_j>0 \;\mbox{for $j\neq i$}\}=d_i$, where $S$ is the feasible set, then the K-S solution is $\beta=0$ and $\vu=\bm{d}$.  This wastes all potential increases in total utility simply because one player cannot benefit when others benefit.  This phenomenon does not occur in the other schemes considered here, in which there are socially optimal solutions that increase utility in a scenario of this kind.  

Finally, there is apparently no generalization of the K-S scheme, analogous to $\alpha$-fairness and Nash bargaining, that allows parameterization of the equity/efficiency trade-off.

\subsection{Combining Utilitarian and Rawlsian Criteria}

We now examine two additional methods in the literature for combining utility with a maximin or leximax criterion.  One is the Hooker-Williams SWF described earlier, which combines utility with a maximin criterion.  It was inspired by a 2-person SWF put forward by \citeauthor{WilliamsCookson2000} (\citeyear{WilliamsCookson2000}).  The indifference curves (contours) of the SWF are illustrated in Fig.~\ref{fig:2person}.  
The function is utilitarian when $|u_1-u_2|\geq \Delta$ and represents a maximin criterion otherwise.  Specifically,
\[
F(u_1,u_2) = \left\{ \begin{array}{ll} u_1 + u_2, & \mbox{if $|u_1-u_2|\geq \Delta $} \\
2 \min\{u_1,u_2\}+\Delta, & \mbox{otherwise}
\end{array} \right.
\]
The maximin criterion would ordinarily be $\min\{u_1,u_2\}$, but it is modified here to obtain continuous contours as one moves from the utilitarian to the maximin objective.  

The feasible set in Fig.~\ref{fig:2person} is again the portion of the nonnegative quadrant under the curve.  In a medical context, for example, it represents all feasible health outcomes that are within the resource budget.  The shape of the curve indicates that when patient 1's health reaches a certain point, further improvement requires extraordinary sacrifice by patient 2 due to the transfer of resources.  The utilitarian solution (black dot in the figure) might therefore be viewed as preferable to the maximin solution (small open circle) and in fact yields slightly more social welfare as indicated by the contours. 

\begin{figure}
	\centering
	\includegraphics[trim=70 475 220 120, clip, scale=0.9]{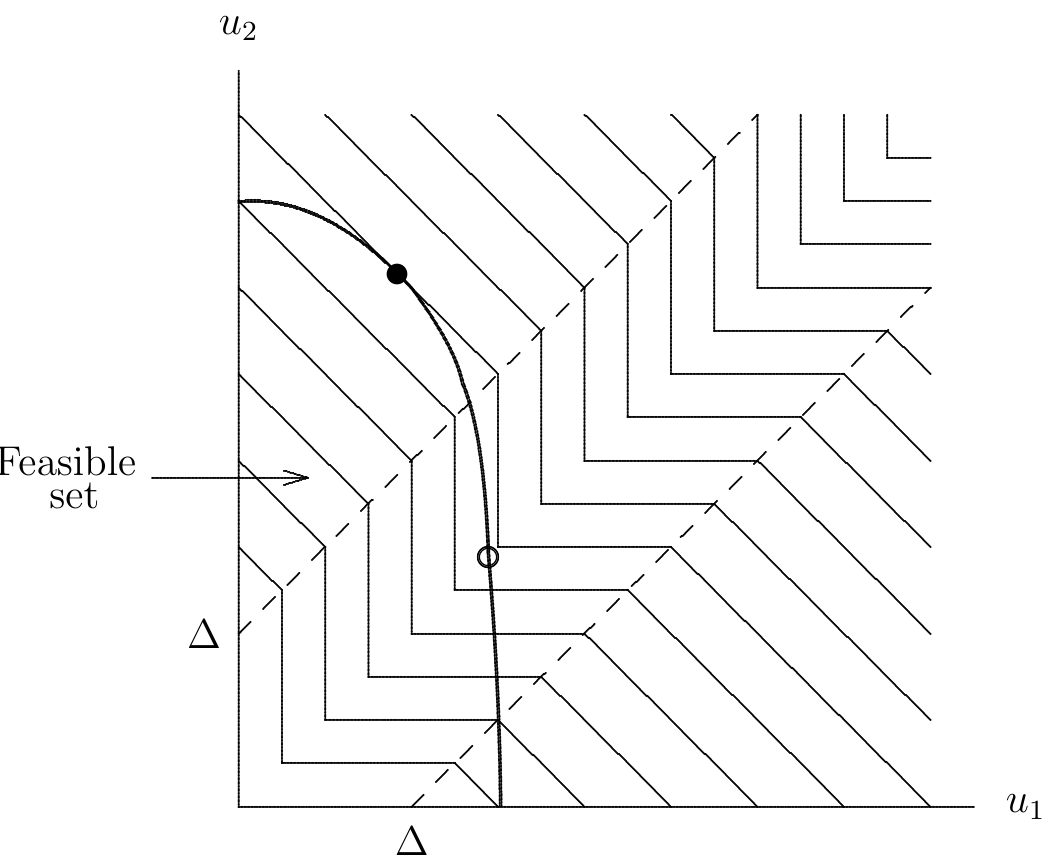}
	\caption{Piecewise linear social welfare contours for 2 persons.}
	\label{fig:2person}
\end{figure}

\citeauthor{HooWil12} (\citeyear{HooWil12}) extend this social welfare function to $n$ persons as follows.  They observe that it can be written
\[
F(u_1,u_2) = \Delta + 2u_{\langle 1\rangle} + (u_1-u_{\langle 1\rangle}-\Delta)^+ + (u_2-u_{\langle 1\rangle}-\Delta)^+
\]
where $(\alpha)^+=\max\{0,\alpha\}$, and where we adopt the convention that $(u_{\langle 1\rangle},\ldots, u_{\langle n\rangle})$ is the tuple $(u_1,\ldots, u_n)$ arranged in non-decreasing order.  This function is readily generalized to $n$ persons:
\begin{equation}
F_1(\vu) = (n-1)\Delta + nu_{\langle 1\rangle} + \sum_{i=1}^n (u_i-u_{\langle 1\rangle}-\Delta)^+ 
\label{eq:swf1}
\end{equation}
We refer to the function as $F_1$ because it will be the first in a series of functions $F_1,\ldots,F_n$ we define later.  It may be more intuitive to rewrite (\ref{eq:swf1}) as
\[
F_1(\vu) = \big(t(\vu)-1\big) \Delta + \sum_{i=1}^{t(\vec{u})} u_{\langle 1\rangle} + \hspace{-2ex} \sum_{i=t(\vec{u})+1}^n \hspace{-2.5ex} u_{\langle i\rangle}
\]
where $t(\vu)$ is defined so that $u_{\langle 1\rangle},\ldots, u_{\langle t(\vu)\rangle}$ are within $\Delta$ of $u_{\langle 1\rangle}$; that is, $u_{\langle i\rangle} - u_{\langle 1\rangle} \leq \Delta$ if and only if $i\leq t(\vu)$.  We will refer to utilities $u_{\langle 1\rangle}, \ldots, u_{\langle t(\vu)\rangle}$ as being {\em in the fair region} and utilities $u_{\langle t(\vu)+1\rangle}, \ldots, u_{\langle n\rangle}$ as being {\em in the utilitarian region}.  
The function $F_1(\vu)$ therefore has the effect of summing all the utilities, but with the proviso that utilities in the fair region are counted as equal to $u_{\langle 1\rangle}$.  Thus the relatively disadvantaged individuals (those within $\Delta$ of the worst-off) are treated in solidarity with the worst-off, in the sense that their lot is equated with that of the worst-off.  The term $(t(\vu)-1)\Delta$ is added to ensure continuity of the function.

The parameter $\Delta$ therefore has an interpretation that can be described independently of its role in the SWF.  Namely, any party with utility within $\Delta$ of the lowest is viewed as disadvantaged and deserving of special consideration.  The SWF then defines the special consideration to be an identification of the disadvantaged party with the worst-off party, which is given disproportionate weight in the summation of utilities---namely, weight equal to the number of utilities within $\Delta$ of the lowest.  


\citeauthor{GerKanCla18}\ (\citeyear{GerKanCla18}) make some interesting observations regarding properties of the SWF (\ref{eq:swf1}).  In particular, the solutions obtained by varying $\Delta$ need not all lie on the Pareto frontier defined by the convex combination (\ref{eq:convexComb1}) of utilitarian and maximin objectives.  This is in fact to be expected, because the convex combination balances total utility with only the welfare of the worst-off party, while (\ref{eq:swf1}) balances total utility with the welfare of all disadvantaged parties.

A problem with (\ref{eq:swf1}), however, is that the actual utility levels of the disadvantaged parties, other than that of the very worst-off, have no effect on the value of the SWF.  This is illustrated in the 3-person example of Fig.~\ref{fig:3person}, which shows the contours of $F(u_1,u_2,u_3)$ with $\Delta=3$ and $u_1$ fixed to zero.  The SWF is constant in the shaded region, meaning that the utilities allocated to persons 2 and 3 have no effect on social welfare as measured by $F_1(\vu)$, so long as they remain in the fair region.  As a result, there are infinitely many utility vectors that maximize social welfare, some of which differ greatly with respect to utilities in the fair region.  One can add a tie-breaking term $\epsilon(u_2+u_3)$ to the social welfare function, where $\epsilon>0$ is small, so as to maximize utility as a secondary objective.  Yet this still does not account for equity considerations within the fair region.  

\begin{figure}
	\centering
	\includegraphics[trim=70 500 250 120, clip, scale=0.9]{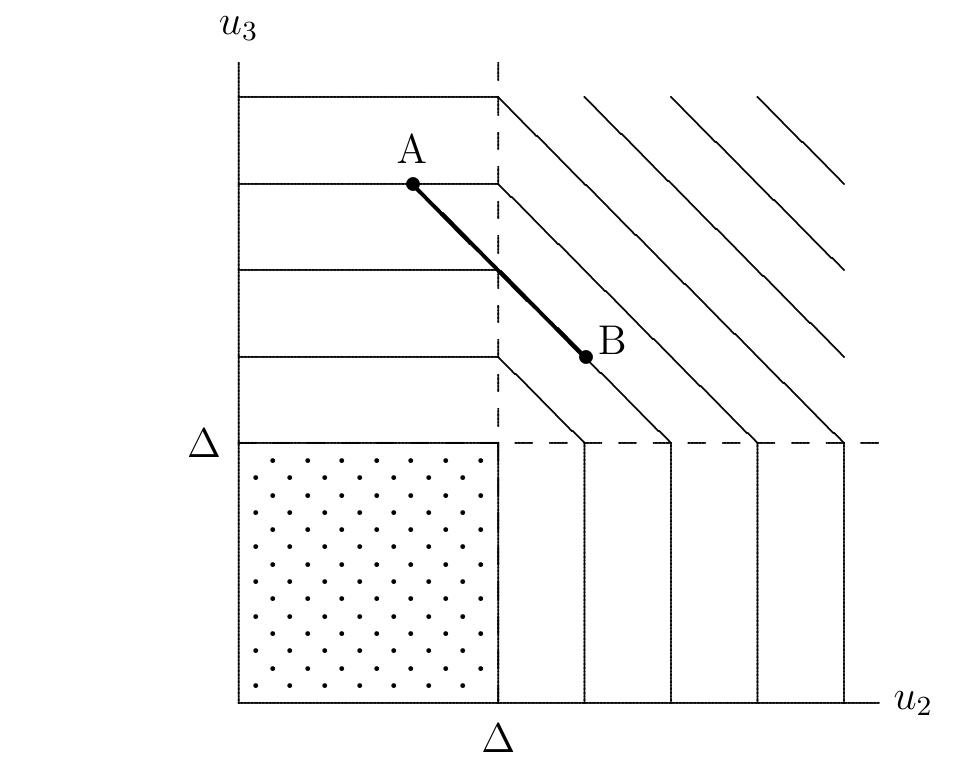}
	\caption{Contours of $F_1(0,u_2,u_3)$.  The function is constant in the shaded region.}
	\label{fig:3person}
\end{figure}

To obtain a SWF that is sensitive to the actual utility levels of all the disadvantaged parties, one might combine utility with a leximax criterion rather than a maximin criterion.  \citeauthor{McElfresh2018} (\citeyear{McElfresh2018}) propose one method of doing so in the context of kidney exchange.  Their method is related to the Hooker-Williams approach, but it relies on the assumption that the parties can be given a preference ordering in advance.  It first maximizes a SWF that combines utilitarian and maximin criteria in a way that treats the most-preferred party as the worst-off.   If all optimal solutions of this problem lie in the utilitarian region, a utilitarian criterion is used to select one of the optimal solutions.  (Here, a utility vector $\vu$ is said to be in the fair region if $\max_i\{u_i\}-\min_i\{u_i\}\leq \Delta$, and otherwise in the utilitarian region.)  Otherwise a leximax criterion is used for all of the optimal solutions, subject to the preference ordering (i.e., maximize $u_1$ first, then $u_2$ etc.).  If we index the parties in order of decreasing preference, the SWF is
\begin{equation}
F(\vu) = \left\{
\begin{array}{ll}
nu_1, & \mbox{if}\; |u_i-u_j|\leq\Delta \;\mbox{for all} \; i,j \vs\\
{\ds
\sum_i u_i + (N^+-N^-)\Delta, 
} & \mbox{otherwise}
\end{array}
\right.   \label{eq:kidney}
\end{equation}
where $N^+=|\{i\;|\;u_1>u_i\}|$ and $N^-=|\{i\;|\;u_1<u_i\}|$ and the term $(N^+-N^-)\Delta$ achieves continuity.  While it is possible to pre-specify a preference ranking of parties in some applications, such as the kidney exchange problem, this is not possible in many applications.  Also the leximax criterion is not used until optimal solutions of the SWF are already obtained, and then applied only to the optimal solutions.  We prefer to allow a leximax criterion to play a role in evaluating all the possible solutions.

\section{Defining the Social Welfare Functions} \label{sec:defSWF}

We now propose general-purpose SWFs that combine utilitarian and leximax criteria.  Our aim is to preserve the desirable properties of the Hooker-Williams SWF while avoiding some of the shortcomings of it and other SWFs discussed above.  In particular, we regulate the fairness/efficiency trade-off with a parameter $\Delta$ that has a practical interpretation.  By replacing the maximin with a leximax criterion, we allow the fairness criterion to reflect the actual utility levels of all disadvantaged parties rather than just that of the worst-off.  In addition, we do not assume a predefined preference ordering of the parties when applying the leximax criterion.  Thus a utility vector $\vu$ is lexicographically greater than or equal to $\bm{u}'$ when $u_{\langle k \rangle}\geq u'_{\langle k\rangle}$ and $(u_{\langle 1\rangle},\ldots,u_{\langle k-1\rangle})=(u'_{\langle 1\rangle},\ldots,u'_{\langle k-1\rangle})$ for some $k\in \{1,\ldots,n\}$.  

To combine the leximax and utilitarian criteria, we propose optimizing a sequence of social welfare functions $F_1(\vu),\ldots,F_n(\vu)$, each of which combines maximin and utilitarian measures.  The first function $F_1(\vu)$ is the Hooker-Williams function (\ref{eq:swf1}) defined earlier and is maximized over $\vu=(u_1,\ldots,u_n)$ to obtain a value for $u_{\langle 1\rangle}$.  Each subsequent function $F_k(\vu)$ is maximized over $u_{\langle k\rangle},\ldots,u_{\langle n\rangle}$, while fixing utilities $u_{\langle 1\rangle},\ldots,u_{\langle k-1\rangle}$ to the values already obtained, to determine a value for $u_{\langle k\rangle}$.  The process terminates when maximizing $F_k(\vu)$ yields a value of $u_{\langle k\rangle}$ that lies outside the fair region.  At this point, $F_k(\vu)$ is utilitarian, and utilities $u_{\langle k\rangle},\ldots, u_{\langle n \rangle}$ are determined simultaneously by maximizing $F_k(\vu)$ while fixing $u_{\langle 1\rangle},\ldots, u_{\langle k-1\rangle}$ to the values already obtained.  We refer to a utility vector $(u_{\langle 1\rangle},\ldots,u_{\langle n\rangle})$ that results from this process as {\em socially optimal}.
	
We will describe this sequential optimization procedure more precisely in Section~\ref{sec:optimization}, but we must first define and explain the functions $F_k(\vu)$.  To develop functions that are continuous as well as analogous to the Hooker-Williams SWF  $F_1(\vu)$ in (\ref{eq:swf1}), it is helpful to write $F_1(\vu)$ as
\[
F_1(\vu) = t(\vu)u_{\langle 1\rangle} + \big(t(\vu)-1\big)\Delta + \hspace{-2ex} \sum_{i=t(\vu)+1}^n \hspace{-2ex} u_{\langle i\rangle}
\]
Maximizing $F_1(\vu)$ determines the location of the fair region because it determines the value of $u_{\langle 1\rangle}$, and the fair region is the interval $[u_{\langle 1\rangle}, u_{\langle 1\rangle}+\Delta]$.  The function $F_1(\vu)$ gives priority to the smallest utility $u_{\langle 1\rangle}$ by allowing it to represent all the utilities in the fair region and assigning it a weight $t(\vu)$ equal to the size of the fair region, while utilities in the utilitarian region receive only unit weight.   However, utilities in the fair region other than $u_{\langle 1\rangle}$ do not appear in the SWF, and their actual values therefore have no bearing on social welfare.  One can even reduce them to $u_{\langle 1\rangle}$ without affecting the value of $F_1(\vu)$. 

We now generalize the $F_1(\vu)$ to $F_k(\vu)$ in a way that takes into account the other utilities in the fair region.  For $k=2,\ldots,n$, we define
\begin{equation}
F_k(\vu) = \left\{
\begin{array}{ll}
{\ds
\sum_{i=1}^k (n-i+1) u_{\langle i\rangle} 
+ \hspace{-2ex} \sum_{i=t(\vu)+1}^n \hspace{-2ex} (u_{\langle i\rangle}-u_{\langle 1\rangle}-\Delta),
} & \mbox{if $t(\vu)\geq k$} \vs \vs \\
{\ds
\sum_{i=1}^n u_{\langle i\rangle}, 
} & \mbox{if $t(\vu)<k$}
\end{array}
\right.
\label{eq:swf2}
\end{equation}
This results in an optimization problem that is quite different than when $k=1$, because the fair region has already been determined by maximizing $F_1(\vu)$.  Utilities $u_{\langle 1\rangle},\ldots, u_{\langle k-1\rangle}$ have been fixed by the solutions of prior optimization problems and therefore appear in $F_k(\vu)$ as constants, while $u_{\langle k\rangle}$ remains as the only variable in the fair region.  The actual value of this utility plays a role in the SWF, since increasing $u_{\langle k\rangle}$ increases $F_k(\vu)$.  Furthermore, $u_{\langle k\rangle}$ receives more weight than utilities in the utilitarian region, because it has weight $n-k+1$, while those in the utilitarian region receive weight 1.  Since the weight is greater for small $k$, less advantaged parties receive greater priority.  However, the weight does not depend on the number of utilities in the fair region as in $F_1(\vu)$, as this would result in a discontinuous SWF.  


Thus as $F_k(\vu)$ is maximized for increasing values of $k$, each utility $u_{\langle k\rangle}$ in the fair region receives priority at some point in the process.  The process stops when maximizing $F_k(\vu)$ yields a solution in which $u_{\langle k\rangle}$ is the only remaining utility in the fair region, at which point the solution determines the values of all remaining utilities $u_{\langle k\rangle},\ldots,u_{\langle n\rangle}$.  This scheme incorporates lexicographic optimization in the sense that the smaller utilities are determined earlier in the sequence, although rather than maximizing $u_{\langle k\rangle}$ in step $k$, we maximize a SWF that gives priority to $u_{\langle k\rangle}$.  Utilitarianism in incorporated because each maximization problem considers total utility as well as fairness.

For extreme values of $\Delta$, this process yields purely utilitarian or purely leximax solutions.  When $\Delta=0$, we have $t(\vu)=1$ for all $\vu$, and $F_1(\vu)$ reduces to a utilitarian criterion.  The fair region is the single point $u_{\langle 1\rangle}$, and we solve the social welfare problem simply by maximizing $F_1(\vu)$, which yields a utilitarian solution.  For sufficiently large $\Delta$, $t(\vu)=n$ for all feasible $\vu$, and $F_k(\vu)$ is $(n-k+1)u_{\langle k\rangle}$ plus a constant for $k=1,\ldots, n$.  Since all $u_i$s lie in the fair region, we sequentially maximize $F_k(\vu)$ for $k=1,\ldots n$ and therefore obtain a pure leximax solution.  Intermediate values of $\Delta$ combine utilitarian and leximax criteria.


Figure~\ref{fig:3personNew} illustrates how maximizing $F_1(\vu),\ldots,F_n(\vu)$ sequentially is more sensitive to equity than maximizing $F_1(\vu)$, which has the flat region shown in Fig.~\ref{fig:3person}, as noted earlier.  Suppose we determine a value for $u_1$ by maximizing $F_1(\vu)$, say $u_1=0$.  Then the function $F_2(\vu)$ has no flat regions, as is evident in Fig.~\ref{fig:3personNew}, and the solutions in the flat region of Fig.~\ref{fig:3person} are now distinguished.  
	
\begin{figure}
\centering
\includegraphics[trim=70 450 250 120, clip, scale=0.9]{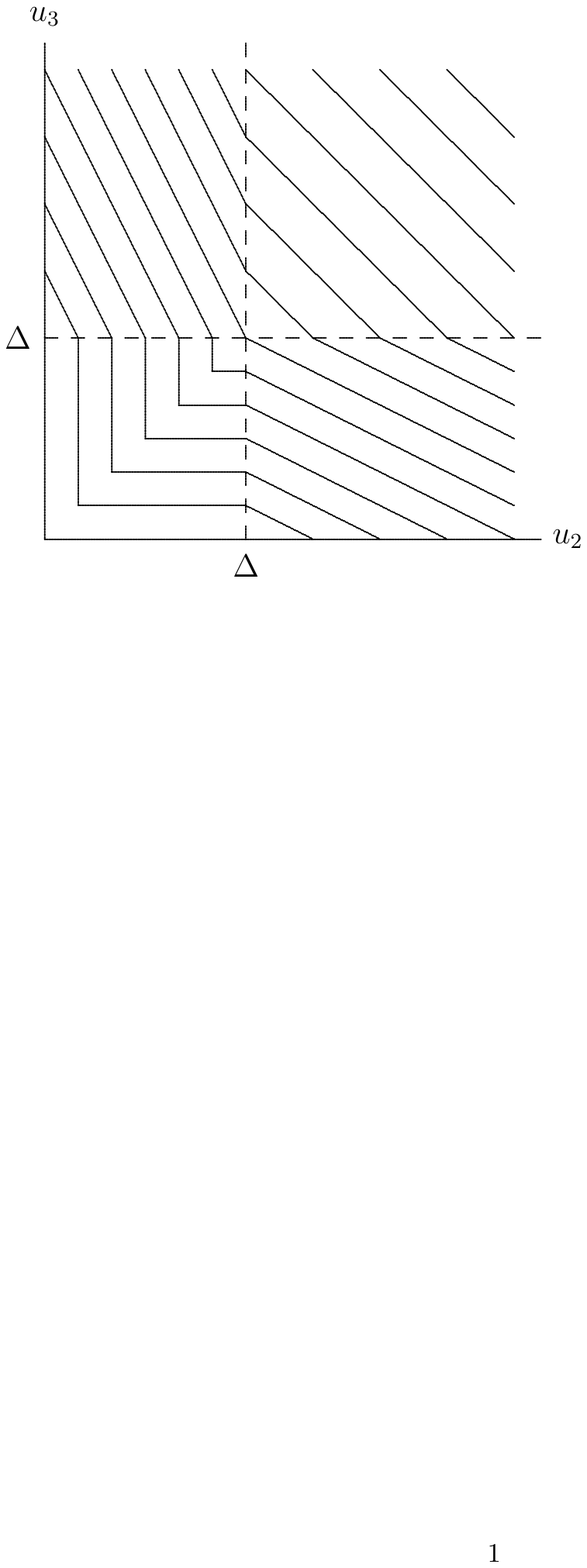}
\caption{Contours of $F_2(0,u_2,u_3)$ with $\Delta=3$ and contour interval 1.}
\label{fig:3personNew}
\end{figure}


\section{Properties of the Social Welfare Functions} \label{sec:PD}

We now investigate some mathematical properties of the social welfare functions $F_k(\vu)$.  After proving continuity, we show that while $F_k(\vu)$ need not satisfy the Pigou-Dalton condition, it satisfies a somewhat weaker condition.  

\begin{theorem} \label{thm:continuity}
The functions $F_k(\vu)$ are continuous for $k=1,\ldots, n$.
\end{theorem}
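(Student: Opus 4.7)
My plan splits into the cases $k=1$ and $k\ge 2$.

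For $k=1$, I would invoke the representation~(\ref{eq:swf1}),
\[
F_1(\vu) = (n-1)\Delta + nu_{\langle 1\rangle} + \sum_{i=1}^n (u_i - u_{\langle 1\rangle} - \Delta)^+.
\]
Since $u_{\langle 1\rangle}=\min_i u_i$ is continuous in $\vu$, the positive-part function $(\cdot)^+$ is continuous, and finite sums and compositions of continuous functions are continuous, $F_1$ is continuous on $\R^n$.

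For $k\ge 2$, my plan is to argue continuity piecewise. The order statistics $u_{\langle i\rangle}$ are continuous on all of $\R^n$ (they are expressible through $\min$/$\max$ operations on the components of $\vu$), so on the open set where $t(\vu)$ is locally constant either defining branch of $F_k$ is a continuous expression in $\vu$. What remains is to verify continuity across the boundaries at which $t(\vu)$ jumps; these occur precisely when some $u_{\langle j\rangle}$ meets the threshold $u_{\langle 1\rangle}+\Delta$. Points where two order statistics coincide pose no problem, because the order statistics themselves remain continuous there.

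There are two kinds of threshold crossings to address. For a crossing $u_{\langle j\rangle}=u_{\langle 1\rangle}+\Delta$ with $j>k$, the defining branch does not change across the boundary, so the only effect is a shift of the summation index $t(\vu)+1$ by one; I check that the added or removed summand $u_{\langle j\rangle}-u_{\langle 1\rangle}-\Delta$ vanishes on the crossing. The main obstacle is the crossing $j=k$, where the definition switches between the two branches of~(\ref{eq:swf2}). Here I would substitute $u_{\langle k\rangle}=u_{\langle 1\rangle}+\Delta$ into the $t(\vu)\ge k$ formula, collect the coefficient $(n-k+1)u_{\langle k\rangle}$ together with the $-(u_{\langle 1\rangle}+\Delta)$ contributions obtained by expanding $\sum_{i=k+1}^n (u_{\langle i\rangle}-u_{\langle 1\rangle}-\Delta)$, and compare the resulting expression term-by-term against the $t(\vu)<k$ branch $\sum_{i=1}^n u_{\langle i\rangle}$. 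Establishing this identity at the boundary, which rests on the specific descending weights $(n-i+1)$ chosen in~(\ref{eq:swf2}), is the technical crux of the argument.
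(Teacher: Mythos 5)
Your handling of $k=1$ is correct and matches the paper's argument, and your observation about the $j>k$ crossings (the added or removed summand $u_{\langle j\rangle}-u_{\langle 1\rangle}-\Delta$ vanishes at the threshold) is also fine. The gap is precisely in the step you defer as ``the technical crux'': the boundary identity at $u_{\langle k\rangle}=u_{\langle 1\rangle}+\Delta$ that your plan requires is \emph{false} for the two branches as written in (\ref{eq:swf2}). Substituting $u_{\langle k\rangle}=u_{\langle 1\rangle}+\Delta$ into the $t(\vu)\geq k$ branch (with $t(\vu)=k$) yields
\[
\sum_{i=1}^{k-1}(n-i+1)u_{\langle i\rangle}+\sum_{i=k}^{n}u_{\langle i\rangle},
\]
which exceeds the $t(\vu)<k$ branch $\sum_{i=1}^{n}u_{\langle i\rangle}$ by $\sum_{i=1}^{k-1}(n-i)u_{\langle i\rangle}$, a quantity that is strictly positive unless $u_{\langle 1\rangle}=\cdots=u_{\langle k-1\rangle}=0$. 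Concretely, with $n=3$, $k=2$, $\Delta=3$ and $\vu=(1,u_2,10)$, the first branch gives $17$ at $u_2=4$ while the second branch tends to $15$ as $u_2\to 4^+$. So the term-by-term comparison you propose cannot be made to close, and your pasting argument, carried out honestly, would conclude that the literal two-branch formula is discontinuous at that boundary.

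The paper avoids this by not pasting branches at all: it rewrites $F_k$ as the single closed-form expression (\ref{eq:swf2a}), equivalently
\[
\sum_{i=1}^{k-1}(n-i+1)u_{\langle i\rangle}+(n-k+1)\min\{u_{\langle 1\rangle}+\Delta,\,u_{\langle k\rangle}\}+\sum_{i=k}^{n}(u_{\langle i\rangle}-u_{\langle 1\rangle}-\Delta)^{+},
\]
which is continuous by inspection, being built from order statistics and the $\min$ and $(\cdot)^{+}$ of continuous functions. This closed form coincides with the first branch of (\ref{eq:swf2}) exactly, but differs from the stated second branch by the same $\sum_{i=1}^{k-1}(n-i)u_{\langle i\rangle}$ identified above---a quantity that is constant in the context of problem $P_k$, where $u_{\langle 1\rangle},\ldots,u_{\langle k-1\rangle}$ are already fixed, and it is this continuous version that the paper actually uses from Section~\ref{sec:optimization} onward. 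To repair your argument you would need to either take (\ref{eq:swf2a}) as the definition of $F_k$ (at which point the piecewise analysis is unnecessary), or replace the second branch of (\ref{eq:swf2}) by $\sum_{i=1}^{k-1}(n-i+1)u_{\langle i\rangle}+\sum_{i=k}^{n}u_{\langle i\rangle}$ before checking the boundary.
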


\begin{proof}
To prove continuity of $F_1(\vu)$, it suffices to show that each term of (\ref{eq:swf1}) is continuous, because a sum of continuous functions is continuous.  The first term of (\ref{eq:swf1}) is a constant, and the second term is continuous because order statistics are continuous functions.  Each term of of the summation is continuous because it is the maximum of two continuous functions.  To show that $F_k(\vu)$ is continuous for $k\geq 2$, it is convenient to write (\ref{eq:swf2}) as
\begin{equation}
F_k(\vu) = \sum_{i=1}^{k-1} (n-i+1)u_{\langle i\rangle} 
+ (n-k+1) u_{\langle k\rangle} 
- (n-k)(u_{\langle k\rangle} - u_{\langle 1\rangle} - \Delta)^+
+ \hspace{-0.5ex} \sum_{i=k+1}^n \hspace{-0.5ex} (u_{\langle i\rangle}-u_{\langle 1\rangle}-\Delta)^+
\label{eq:swf2a}
\end{equation}
which simplifies to
\[
F_k(\vu) = \sum_{i=1}^{k-1} (n-i+1) u_{\langle i\rangle} 
+ (n-k+1) \min\{u_{\langle 1\rangle} + \Delta, u_{\langle k\rangle}\}
+ \hspace{-0.5ex} \sum_{i=k}^n (u_{\langle i\rangle}-u_{\langle 1\rangle}-\Delta)^+
\]
Because order statistics are continuous, $u_{\langle k\rangle}$ and $u_{\langle 1\rangle}$ are continuous functions of $\vu$.  Also $\min\{u_{\langle 1\rangle} + \Delta, u_{\langle k\rangle}\}$ and $(u_{\langle i\rangle} - u_{\langle 1\rangle}-\Delta)^+$ are continuous because they are the minimum or maximum of continuous functions.
\end{proof}

We now investigate whether the SWFs $F_k(\vu)$ satisfy the Pigou-Dalton and related conditions.  Recall that the Pigou-Dalton condition requires that any utility transfer from a better-off party to a worse-off party increases (or at least does not decrease) social welfare.  We will not require a strict increase, because otherwise a pure utilitarian SWF fails the condition.  
Thus $F(\vu)$ satisfies the Pigou-Dalton condition 
if $F(\vu + \epsilon\vec{e}_i - \epsilon\vec{e}_j)\geq F(\vu)$ for any $i,j$ and any $\epsilon>0$, where $u_i<u_j$ and $\vec{e}_i$, $\vec{e}_j$ are the $i$th and $j$th unit vectors, respectively.  
	
It is stated in \citeauthor{Karsu2015} (\citeyear{Karsu2015}) that the Hooker-Williams SWF $F_1(\vu)$ satisfies the Pigou-Dalton condition, but this is true only for $n=2$.  Figure~\ref{fig:3person} provides a counterexample for $n=3$.  The move from point $A$ to point $B$ represents a utility transfer from a better-off party to a worse-off party.  This strictly reduces social welfare, because $B$ lies on a lower contour than $A$.  Similar counterexamples show that $F_k(\vu)$ can violate the Pigou-Dalton condition for $k\geq 2$.

While it is widely recognized that an equality measure should satisfy the Pigou-Dalton condition, this is not obviously true of $F_k(\vu)$, since it is not an equality measure or even a fairness measure.  In any event,
\citeauthor{ChaMoy05} (\citeyear{ChaMoy05}) have defined a weaker form of the Pigou-Dalton condition that all the functions $F_k(\vu)$ satisfy.  It is based on transfers of utility from a better-off class to a worse-off class rather than from one individual to another.  Specifically, it examines the consequences of transferring a given amount of utility from individuals whose utility lies above any given threshold (taking an equal share from each) to those whose utility lies below any given threshold (giving an equal share to each).  Arguably, only such transfers should be considered, because a removal of utility from the upper range should remove at least as much from the best-off individual as from other well-off individuals, and an endowment of utility on the lower range should benefit the worst-off individual at least as much as other badly-off individuals.  
The Chateauneuf-Moyes (C-M) condition requires that such transfers result in at least as much social welfare.
	
To define the C-M condition formally, let us say that a {\em C-M transfer} is a transfer of utility from $\vu$ to $\vu'$ such that $u_1\leq \cdots \leq u_n$ as well as $u'_1\leq\cdots\leq u'_n$, and for some pair of integers $\ell$, $h$ with $1\leq \ell < h \leq n$, we have $u_{\ell}<u_h$ and 
\[ 
\vu' = \vu + \frac{\epsilon}{\ell}\sum_{i=1}^{\ell}\bm{e}_i - \frac{\epsilon}{n-h+1}\sum_{i=h}^n \bm{e}_i
\]
A SWF $F(\vu)$ satisfies the C-M condition if C-M transfers never decrease social welfare.  That is, for any $\vu$ and any C-M transfer from $\vu$ to $\vu'$,
\begin{equation}
F(\vu') \geq F(\vu)
\label{eq:C-M}
\end{equation}
for sufficiently small $\epsilon>0$.

\begin{theorem} \label{th:CM1}
	The Hooker-Williams social welfare function $F_1(\vu)$ satisfies the Chateauneuf-Moyes condition.
\end{theorem}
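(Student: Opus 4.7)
The plan is to work directly with the closed form $F_1(\vu) = (n-1)\Delta + n u_{\langle 1\rangle} + \sum_i (u_i - u_{\langle 1\rangle} - \Delta)^+$ and to compute $F_1(\vu') - F_1(\vu)$ under a C-M transfer as an explicit linear function of $\epsilon$. Since $F_1$ is symmetric in its arguments, I would assume at the outset that $\vu$ is listed in non-decreasing order, so that $u_{\langle i\rangle} = u_i$. The C-M definition implicitly forces $u_\ell < u_{\ell+1}$ and $u_{h-1} < u_h$ (otherwise $\vu'$ would fail to be sorted), and for $\epsilon$ smaller than the minimum nonzero value of $|u_i - u_1 - \Delta|$ the set $T = \{i : u_i > u_1 + \Delta\}$ of utilitarian indices is the same for $\vu$ and $\vu'$, with $u'_{\langle 1\rangle} = u'_1 = u_1 + \epsilon/\ell$.

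Under these conditions I would tally contributions term by term. The summand $n u_{\langle 1\rangle}$ contributes $n\epsilon/\ell$ to $F_1(\vu') - F_1(\vu)$. A positive-part term at index $i \in T$ contributes $0$ if $i \leq \ell$ (both $u_i$ and $u_1$ rise by $\epsilon/\ell$), $-\epsilon/\ell$ if $\ell < i < h$ (only $u_1$ moves), and $-\epsilon/\ell - \epsilon/(n-h+1)$ if $i \geq h$ (both move, in opposite directions). Terms with $i \notin T$ stay at zero. Summing gives a closed expression depending only on $n$, $\ell$, $h$, and the cardinalities $a = |T \cap \{\ell+1,\ldots,h-1\}|$ and $b = |T \cap \{h,\ldots,n\}|$.

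To finish I split into two cases. If $h \notin T$, then $T \subseteq \{h+1,\ldots,n\}$, so $a = 0$, $b \leq n-h$, and the change becomes $(n-b)\epsilon/\ell - b\epsilon/(n-h+1)$; since $\ell \leq h-1$, the inequality $(n-h)\ell \leq h(n-h+1)$ yields nonnegativity after clearing denominators. If $h \in T$, then $\{h,\ldots,n\} \subseteq T$ so $b = n-h+1$, and the change collapses to $(\epsilon/\ell)(h - 1 - a) - \epsilon$, which is nonnegative because $a \leq h - 1 - \ell$. The main obstacle is not conceptual but just the case split and bookkeeping; the potential boundary issues (a utility exactly at $u_1 + \Delta$, or a tie that the transfer would disrupt) are absorbed by taking $\epsilon$ small enough, by the sortedness the C-M definition already demands at $\ell$ and $h$, and by the symmetry of $F_1$.
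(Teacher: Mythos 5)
Your proposal is correct and follows essentially the same route as the paper: both expand $F_1(\vu')-F_1(\vu)$ term by term from the closed form (\ref{eq:swf1}) and reduce the verification to a case split on where the fair-region boundary $t(\vu)$ falls relative to $\ell$ and $h$ (your $a$ and $b$ are exactly the counts the paper tabulates as gains and losses). The only cosmetic difference is that the paper uses three cases and the comparison gain $\geq \epsilon \geq$ loss, whereas you merge the two cases with $h$ in the utilitarian region and check the same inequality by clearing denominators.
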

	
\begin{proof}
It suffices to show that (\ref{eq:C-M}) holds for any $\vu$ and sufficiently small $\epsilon>0$.  There are three types of utility transfer, illustrated in Fig.~\ref{fig:CM1}:
(a) $\ell<h\leq t(\vu)$, 
(b) $\ell\leq t(\vu) <h$, and
(c) $t(\vu) <\ell< h$.
The resulting utility gain by individuals $1,\ldots\ell$, and loss by individuals $h,\ldots,n$, are indicated in Table~\ref{ta:CM1}.  It is clear on inspection of Fig.~\ref{fig:CM1} that the gain is at least $\epsilon$ in each case, and the loss never more than $\epsilon$.  The C-M condition is therefore satisfied.
\end{proof}
	
\begin{figure}[!t]
	\centering
	\includegraphics[trim=130 640 120 90, clip]{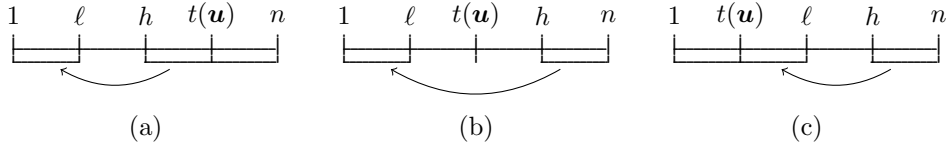}
	\caption{Illustration of proof of Theorem \ref{th:CM1}.}
	\label{fig:CM1}
\end{figure}
	
\begin{table}[!h]
	\centering
	\caption{Verifying the Chateauneuf-Moyes condition for $F_1(\vu)$} \label{ta:CM1}
	\vs\vs
	\begin{tabular}{ccc}
		Case & Gain & Loss \\  
		\hline \\ [-2ex]
		(a) & ${\ds \frac{t(\vu)}{\ell}\epsilon >\epsilon}$  & ${\ds \frac{n-t(\vu)}{n-h+1}\epsilon<\epsilon}$ \vs \vs\\
		(b) & ${\ds \frac{t(\vu)}{\ell}\epsilon > \epsilon}$ & $\epsilon$ \vs \vs \\
		(c) & $\epsilon$ & $\epsilon$ \\ 
		\hline
\end{tabular}
\end{table}

A similar proof (with six cases), given in Appendix~1, shows that the remaining SWFs satisfy the C-M condition.

\begin{theorem} \label{th:CM2}
	The social welfare functions $F_k(\vu)$ satisfy the Chateauneuf-Moyes condition for $k=2,\ldots,n$.
\end{theorem}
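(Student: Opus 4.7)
The plan is to mirror the proof of Theorem \ref{th:CM1}, extending its three-case analysis to six cases that account for the additional threshold $k$ in $F_k(\vu)$.

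First, I would dispatch the easy case $t(\vu) < k$. Here $F_k(\vu) = \sum_i u_i$ is a pure utilitarian sum, and any C-M transfer preserves total utility ($\ell$ recipients gain a total of $\epsilon$, while $n-h+1$ donors lose a total of $\epsilon$), so $F_k(\vu') = F_k(\vu)$ trivially. For sufficiently small $\epsilon$, the transfer cannot raise $t(\vu')$ to $k$ unless some $u_{\langle i\rangle} - u_{\langle 1\rangle}$ equals $\Delta$ exactly; those boundary cases are handled by the continuity established in Theorem \ref{thm:continuity}.

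For the main case $t(\vu)\geq k$, I would rewrite (\ref{eq:swf2}) as $F_k(\vu) = \sum_{i=1}^n c_i u_{\langle i\rangle} - (n-t(\vu))\Delta$, with coefficients $c_1 = t(\vu)$, $c_i = n-i+1$ for $2\leq i\leq k$, $c_i = 0$ for $k < i\leq t(\vu)$, and $c_i = 1$ for $t(\vu) < i\leq n$. For sufficiently small $\epsilon$ (again with boundary configurations covered by continuity), both the sorted order and the value $t(\vu)$ are preserved, so
$$F_k(\vu') - F_k(\vu) = \epsilon \left[ \frac{1}{\ell}\sum_{i=1}^\ell c_i \;-\; \frac{1}{n-h+1}\sum_{i=h}^n c_i \right],$$
and it remains to show that this quantity is non-negative.

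The case analysis then splits six ways based on where $\ell$ and $h$ sit relative to $k$ and $t(\vu)$: (a) $\ell < h\leq k$; (b) $\ell\leq k < h\leq t(\vu)$; (c) $\ell\leq k\leq t(\vu) < h$; (d) $k < \ell < h\leq t(\vu)$; (e) $k < \ell\leq t(\vu) < h$; (f) $t(\vu) < \ell < h$. In each case I would tabulate the per-$\epsilon$ gain on the low end and loss on the high end, analogous to Table \ref{ta:CM1}, and verify gain $\geq$ loss. The main obstacle is cases (a) and (b), where both averages draw (partly) from the strictly decreasing block $c_2=n-1,\ldots,c_k=n-k+1$; here the essential ingredient is $c_1 = t(\vu)\geq k$, which compensates for the fact that $c_1$ can be smaller than $c_2$. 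Cases (d) and (f) reduce to averages involving only $0$s and $1$s, and in cases (c) and (e) the upper average is exactly $1$ while the lower average clearly exceeds $1$, so they follow quickly.
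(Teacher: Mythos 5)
Your plan is the same as the paper's Appendix~1 proof of Theorem~\ref{th:CM2}: dispose of $t(\vu)<k$ as the utilitarian case, rewrite $F_k(\vu)$ with the coefficient vector $\big(t(\vu),\,n-1,\ldots,n-k+1,\,0,\ldots,0,\,1,\ldots,1\big)$ applied to the order statistics, and compare the average coefficient over $\{1,\ldots,\ell\}$ with that over $\{h,\ldots,n\}$ in six cases determined by where $\ell$ and $h$ fall relative to $k$ and $t(\vu)$; your case list is a relabeling of the paper's Table~\ref{ta:CM2}. Most of the cases do close as quickly as you indicate, because the upper average is at most $1$ while the lower average is at least $1$ (using $t(\vu)\geq k\geq \ell$). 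But you misplace the difficulty: your case (b), $\ell\leq k<h\leq t(\vu)$, is one of the easy ones, since the donors' window then lies entirely in the $\{0,1\}$ block and the loss is strictly less than $\epsilon$ while the gain is at least $\epsilon$.

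The genuine gap is case (a), $\ell<h\leq k$, the only case in which the loss can exceed $\epsilon$, and your one-line justification (``the essential ingredient is $c_1=t(\vu)\geq k$'') does not settle it. The inequality to be proved there is
\[
\frac{1}{\ell}\Big(t(\vu)+\sum_{i=2}^{\ell}(n-i+1)\Big) \;\geq\; \frac{1}{n-h+1}\Big(\sum_{i=h}^{k}(n-i+1)+n-t(\vu)\Big),
\]
and it can be asymptotically tight: with $\ell=1$, $h=k=t(\vu)=2$ the left side is $2$ while the right side is $(2n-3)/(n-1)\rightarrow 2$. Note also that the coefficient sequence is not monotone (it jumps from $0$ back up to $1$ at $i=t(\vu)+1$), so no simple rearrangement or averaging argument applies. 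The paper has to (i) reduce to the worst case $h=\ell+1$, (ii) bound the donors' block sum by $\big(t(\vu)-\ell\big)(n-\ell)$, and (iii) reduce the claim to $\big(n-t(\vu)\big)\big(\tfrac{1}{\ell}+\tfrac{1}{n-\ell}-1\big)\leq\tfrac{\ell+1}{2}$, which itself needs a separate integrality argument forcing $n=\ell+1$ in the remaining subcase, where $t(\vu)\geq\ell+1$ finally enters. Some argument of this kind must be supplied before your case (a), and hence the theorem, is established.
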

	
Although each $F_k(\vu)$ satisfies the C-M condition, the feasible set can be designed in such a way that a C-M transfer may not transform a socially optimal solution to another socially optimal solution.  We will demonstrate this in the next section.  

\section{The Sequential Optimization Procedure}  \label{sec:optimization}
	
We now describe in detail how one can obtain a socially optimal utility distribution.
We wish to maximize the sequence of social welfare functions $F_1(\vu),\ldots,F_n(\vu)$, subject to resource constraints, in such a way that maximizing $F_k(\vu)$ determines the value of the $k$th smallest $u_i$ in the socially optimal solution.  We therefore maximize $F_k(\vu)$ subject to the condition that (a) the $u_i$s already determined are fixed to the values obtained for them, and (b) the remaining $u_i$s must be at least as large as the largest $u_i$ already determined.  The unfixed $u_i$ with the smallest value in the solution becomes the utility determined by maximizing $F_k(\vu)$.
	
For now, we indicate resource limits by writing $\vu\in\mathcal{U}$.  In practice, they would be formulated in a MILP model by introducing variables and constraints that specify resource limitations and how resource allocations to individual parties translate to utilities.  This will be illustrated in our experiments in Section \ref{sec:exp}. 
	
To state the optimization procedure more precisely, we define a sequence of maximization problems $P_1,\ldots,P_k$, where $P_1$ maximizes $F_1(\vu)$ subject to $\vu\in\mathcal{U}$, and $P_k$ for $k=2,\ldots,n$ is
\begin{equation}
\begin{array}{l}
\max \; F_k(\vu) \vs \\
u_{i_j}=\bar{u}_{i_j}, \;\; j=1, \ldots, k-1 \vs \\
u_i \geq \bar{u}_{i_{k-1}}, \;\; i\in I_k \vs \\
\vu \in \mathcal{U}
\end{array}
\label{eq:seq}
\end{equation}
The indices $i_j$ are defined so that $u_{i_j}$ is the utility determined by solving $P_j$.  In particular, $u_{i_j}$ is the utility with the smallest value among the unfixed utilities in an optimal solution obtained by solving $P_j$.  Thus
\[
i_j = \mathop{\mathrm{argmin}}_{i\in I_j} \{u_i^{[j]}\}
\]
where $\vu^{[j]}$ is an optimal solution of $P_j$ and $I_j = \{1,\ldots,n\} \setminus \{i_1,\ldots,i_{j-1}\}$.  We denote by $\bar{u}_{i_j}=u^{[j]}_{i_j}$ the solution value obtained for $u_{i_j}$ in $P_j$.  We need only solve $P_k$ for $k=1,\ldots,K+1$, where $K$ is the largest $k$ for which $\bar{u}_{i_k}\leq \bar{u}_{i_1} + \Delta$.  The solution of the social welfare problem is then
\[
u_i = \left\{
\begin{array}{ll}
\bar{u}_i & \mbox{for} \; i=i_1,\ldots,i_{K-1} \vs \\
u^{[K]}_i & \mbox{for} \; i\in I_{K}
\end{array}
\right.
\]
	
As remarked earlier, a C-M transfer applied to a socially optimal solution need not yield another socially optimal solution.  
To see this, suppose $n=4$, $\Delta=5$, and the feasible set consists only of the three vectors on the left:
\[
\begin{array}{ll}
\vu^1 = (1,2,8,9)  &  (24,15,27,35) \\
\vu^2 = (2,3,7,8)  &  (24,18,32,39) \\
\vu^3 = (1,2,3,12) &  (25,16,22,28) 
\end{array}
\]
The corresponding values $(F_1(\vu),\ldots,F_4(\vu))$ are shown on the right. Distribution $\vu^2$ results from applying a C-M transfer to the socially optimal distribution $\vu^1$, but $\vu^2$ is not socially optimal because $u_1=2$ in no optimal solution of $P_1$. Rather, the unique optimal solution of $P_1$ is $\vu^3$, in which $u_1=1$.  This situation can occur when a socially optimal distribution $\vu$ is not an optimal solution of $P_1$.  In the example, $\vu^1$ is not an optimal solution of $P_1$.

We can simplify the development to follow by removing the initial constants from $F_k(\vu)$ for $k\geq 2$, resulting in the SWF
\begin{equation}
\bar{F}_k(\vu) = (n-k+1)u_{\langle k\rangle}
+ \sum_{i=k}^n (u_{\langle i\rangle} - u_{\langle 1\rangle} - \Delta)^+
\label{eq:swfSimple}
\end{equation}
Although the functions $\bar{F}_k(\vu)$ do not satisfy the C-M condition, they obviously yield the same socially optimal solutions as the original SWFs.  We therefore solve problems $\bar{P}_k$ rather than $P_k$ for $k\geq 2$, where $\bar{P}_k$ results from replacing $F_k(\vu)$ with $\bar{F}_k(\vu)$ in $P_k$.  For notational convenience we let $\bar{P}_1$ denote the original problem $P_1$.

\section{Mixed Integer Programming Model} \label{sec:MILP}
	
For practical solution of the optimization problems $\bar{P}_k$, we wish to formulate them as MILP models.  We drop the resource constraints $u\in\mathcal{U}$ from problems $\bar{P}_1,\ldots,\bar{P}_n$ to obtain $P'_1,\ldots,P'_n$, because we wish to analyze the MILP formulations of the SWFs without the complicating factor of resource constraints.  These constraints can later be added to the optimization models before they are solved.  In addition, problems $P'_1,\ldots,P'_n$ contain innocuous auxiliary constraints that make the problems MILP representable.

The MILP model for $P'_1$ follows a different pattern than the models for $P'_2,\ldots,P'_n$, and we therefore treat the two cases separately.  Problem $P'_1$ can be written
\begin{equation}
\begin{array}{ll}
\max \; z_1 \vs \\
{\ds
	z_1 \leq n u_{\langle 1\rangle} + (n-1)\Delta
	+ \hspace{-0.5ex} \sum_{i=2}^n (u_{\langle i\rangle} - u_{\langle 1\rangle} - \Delta)^+ 
} & (a) \vs \\
u_i \geq 0, \;\mbox{all}\;i & (b) \vs \\
u_i - u_j \leq M, \;\text{all } i,j & (c)
\end{array} \label{eq:seq2-1}
\end{equation}
Constraints (c) ensure MILP representability, as explained in \citeauthor{HooWil12} (\citeyear{HooWil12}), because they imply that the hypograph of (\ref{eq:seq2-1}) is a finite union of polyhedra having the same recession cone (\citeauthor{Jeroslow1987} \citeyear{Jeroslow1987}).  Figure~\ref{fig:2personM} illustrates their effect on the feasible region for $n=2$.  The constraints have no practical import for sufficiently large $M$, although for theoretical purposes we assume only $M>\Delta$.  

The MILP model for $P'_1$ can be written as follows:
\begin{equation}
\begin{array}{ll}
\max \; z_1 \vs \\
{\ds
	z_1 \leq (n-1)\Delta + \sum_{i=1}^n v_i 
} & (a) \vs \\
u_i - \Delta \leq v_i \leq u_i - \Delta\delta_i, \;\; i=1,\ldots,n & (b) \vs \\
w \leq v_i \leq w + (M-\Delta)\delta_i, \;\; i=1, \ldots, n & (c) \vs \\
u_i\geq 0, \;\; \delta_i \in \{0,1\}, \; i=1, \ldots, n
\end{array} \label{eq:MILPmodel1}
\end{equation}
The following is proved in \citeauthor{HooWil12} (\citeyear{HooWil12}).
\begin{theorem}
	Model (\ref{eq:MILPmodel1}) is a correct formulation of $P'_1$.
\end{theorem}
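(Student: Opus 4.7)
The plan is to establish equivalence of the two optimization problems by demonstrating two inclusions: (i) every feasible $\vu$ of $P'_1$ extends to a feasible solution of (\ref{eq:MILPmodel1}) with $z_1=F_1(\vu)$, and (ii) every feasible solution of (\ref{eq:MILPmodel1}) satisfies $z_1\leq F_1(\vu)$ for its $\vu$-component. Together with the shared objective $\max z_1$, these imply that the MILP and $P'_1$ have the same optimal value and that the projection of any MILP optimum onto $\vu$ solves $P'_1$.

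The key preliminary step is a structural analysis of constraints (b) and (c) conditioned on the binary $\delta_i$. When $\delta_i=0$, the bounds collapse to $v_i=w$ together with $w\leq u_i\leq w+\Delta$; when $\delta_i=1$, they collapse to $v_i=u_i-\Delta$ together with $w+\Delta\leq u_i\leq w+M$. Two consequences are crucial: first, in every case $w\leq u_i$, so $w\leq u_{\langle 1\rangle}$; second, any index $i$ with $u_i>u_{\langle 1\rangle}+\Delta$ is forced into the case $\delta_i=1$ (since $w+\Delta\leq u_{\langle 1\rangle}+\Delta<u_i$ is required). As a side benefit, these same bounds imply $u_i\leq u_{\langle 1\rangle}+M$, recovering the auxiliary constraints of $P'_1$.

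For direction (i), given feasible $\vu$ I would set $w=u_{\langle 1\rangle}$, take $\delta_i=0$ precisely for indices in the fair region (i.e., $u_i\leq u_{\langle 1\rangle}+\Delta$), and choose $v_i=w$ or $v_i=u_i-\Delta$ accordingly. Feasibility follows directly from the case analysis above. A short arithmetic check then shows that $(n-1)\Delta+\sum_i v_i=t(\vu)u_{\langle 1\rangle}+(t(\vu)-1)\Delta+\sum_{i:\,u_i>u_{\langle 1\rangle}+\Delta}u_i$, which equals $F_1(\vu)$ by the rewriting of the Hooker--Williams SWF given earlier in the paper.

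For direction (ii), I would partition indices into $S=\{i:u_i>u_{\langle 1\rangle}+\Delta\}$ and its complement. On $S$ the analysis pins $v_i=u_i-\Delta$ exactly. On $S^c$ I claim $v_i\leq u_{\langle 1\rangle}$ uniformly: if $\delta_i=0$ then $v_i=w\leq u_{\langle 1\rangle}$, and if $\delta_i=1$ then $v_i=u_i-\Delta\leq u_{\langle 1\rangle}$ because $u_i\leq u_{\langle 1\rangle}+\Delta$. Summing over both parts and adding $(n-1)\Delta$ yields $z_1\leq F_1(\vu)$ after the same algebraic collapse as in direction (i). The main obstacle is precisely this last bound: the MILP is free to set $\delta_i=1$ on certain fair-region indices, so one must show that the upper bound $F_1(\vu)$ holds independently of how the $\delta_i$ are chosen on the boundary, rather than only for the canonical assignment used in direction (i).
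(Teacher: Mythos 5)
Your argument is correct and complete. Note that the paper does not actually prove this theorem itself---it defers to the cited Hooker--Williams reference---so your proposal supplies a self-contained proof where the paper offers none. Your two-direction structure (extend any feasible $(\vu,z_1)$ of (\ref{eq:seq2-1}) to a feasible point of (\ref{eq:MILPmodel1}) via the canonical assignment $w=u_{\langle 1\rangle}$, $\delta_i=1$ exactly off the fair region; conversely bound $v_i\leq u_{\langle 1\rangle}+(u_i-u_{\langle 1\rangle}-\Delta)^+$ termwise for an arbitrary feasible MILP point) exactly parallels the proof the paper does give for the analogous Theorem~\ref{th:correct} covering the models with $k\geq 2$, and your case analysis of constraints (b)--(c) conditioned on $\delta_i$ correctly identifies the one genuine subtlety, namely that the MILP may set $\delta_i=1$ on fair-region indices, where $v_i=u_i-\Delta\leq u_{\langle 1\rangle}$ still holds. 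You also correctly observe that recovering $u_i-u_j\leq M$ from the MILP requires the standing assumption $M>\Delta$. The only thing worth adding is that this theorem asserts correctness of the formulation, not sharpness; the stronger sharpness claim is Theorem~\ref{th:sharp1}, proved separately in Appendix~2 by a dual (surrogate) argument that your approach does not and need not address.
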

	
When $k\geq 2$, the expression (\ref{eq:swfSimple}) for $\bar{F}_k(\vu)$ implies that problem $P'_k$ can be written
\begin{equation}
\begin{array}{ll}
\max \; z_k \vs \\
{\ds
	z_k \leq (n-k+1) \min\{\bar{u}_{i_1} + \Delta, u_{\langle k\rangle} \}
	+ \sum_{i\in I_k} (u_i-\bar{u}_{i_1}-\Delta)^+
} & (a) \vs \\
u_i \geq \bar{u}_{i_{k-1}}, \;\; i\in I_k & (b) \vs \\
u_i-\bar{u}_{i_1} \leq M, \;\; i\in I_k & (c) 
\end{array} \label{eq:seq2}
\end{equation}
The constraints (\ref{eq:seq2}c) are included to ensure that the problem is MILP representable.  Since $\bar{u}_{i_1}$ is a constant, the hypograph is a union of bounded polyhedra whose recession cones consist of the origin only and are therefore identical.  

\begin{figure}
	\centering
	\includegraphics[trim=120 480 230 120, clip, scale=0.9]{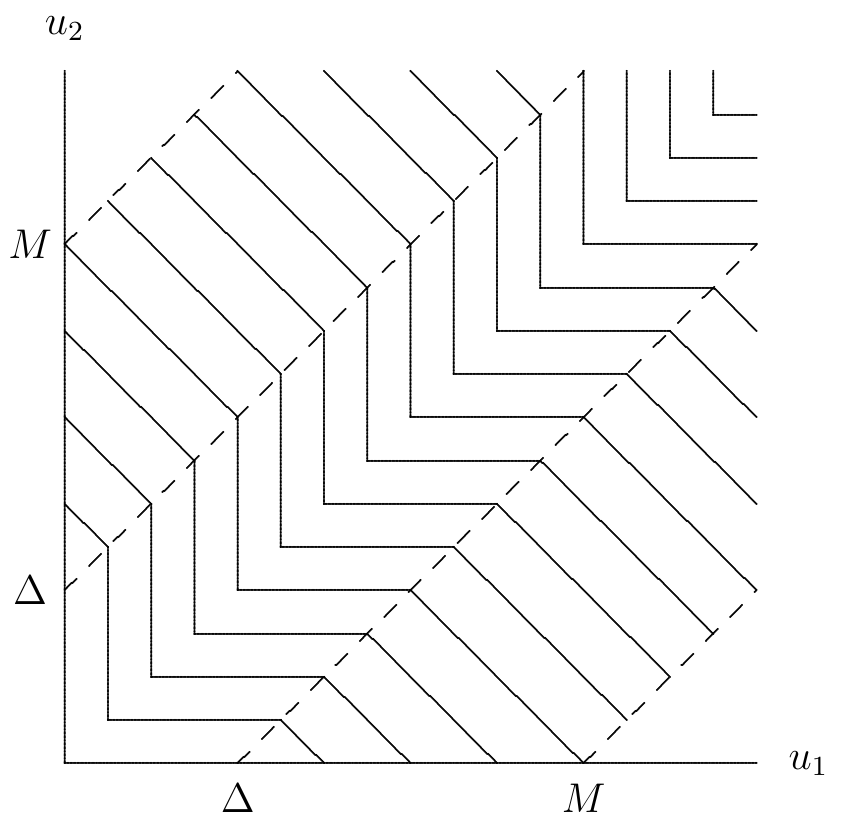}
	\caption{Social welfare function for $F_1(\vu)$ with MILP representable hypograph.}
	\label{fig:2personM}
\end{figure}

The MILP model for $P'_k$ when $k=2,\ldots,n$ is
	
	
\begin{equation}
\begin{array}{ll}
\max \; z_k \vs \\
{\ds
	z_k \leq (n-k+1)\sigma + \sum_{i\in I_k} v_i 
} & (a) \vs \\
0 \leq v_i \leq M\delta_i, \;\; i\in I_k & (b) \vs \\
v_i \leq u_i - \bar{u}_{i_1} - \Delta + M(1-\delta_i), \;\; i\in I_k & (c) \vs \\
\sigma \leq \bar{u}_{i_1} + \Delta & (d) \vs \\
\sigma \leq w  & (e) \vs \\
w \leq u_i, \; i\in I_k & (f) \vs \\
u_i \leq w + M(1-\epsilon_i), \; i\in I_k & (g) \vs \\
{\ds
	\sum_{i\in I_k} \epsilon_i = 1 
} & (h) \vs \\
w \geq \bar{u}_{i_{k-1}} & (i) \vs \\
u_i - \bar{u}_{i_1} \leq M, \; i\in I_k & (j) \vs \\
\delta_i, \epsilon_i \in \{0,1\}, \; i\in I_k
\end{array} \label{eq:MILPmodel}
\end{equation}
	
\begin{theorem} \label{th:correct}
	Model (\ref{eq:MILPmodel}) is a correct formulation of $P'_k$ for $k=2, \ldots, n$.
\end{theorem}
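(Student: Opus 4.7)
The plan is to prove two inclusions. First, every feasible solution of the MILP (\ref{eq:MILPmodel}) projects to a feasible $(u_i)_{i\in I_k}$ of $P'_k$ whose associated $z_k$ satisfies the bound in (\ref{eq:seq2}a); second, every feasible $(u_i)$ of $P'_k$ lifts to an MILP-feasible solution whose $z_k$ attains the right-hand side of (\ref{eq:seq2}a). Since both problems share the objective $\max z_k$, this is enough. The intended interpretation of the auxiliary variables drives the argument: $v_i$ encodes $(u_i-\bar{u}_{i_1}-\Delta)^+$ with $\delta_i$ as its ``above-threshold'' indicator; $w$ represents $\min_{i\in I_k}u_i=u_{\langle k\rangle}$ with $\epsilon_i$ selecting an argmin; and $\sigma$ represents $\min\{\bar{u}_{i_1}+\Delta,\,u_{\langle k\rangle}\}$.

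For the first inclusion I would reason pointwise. Constraints (b) and (c) force $v_i\leq(u_i-\bar{u}_{i_1}-\Delta)^+$: when $\delta_i=0$, (b) pins $v_i=0$; when $\delta_i=1$, (c) gives $v_i\leq u_i-\bar{u}_{i_1}-\Delta$, which combined with $v_i\geq 0$ yields the bound. Constraints (f)--(h) force $w\leq u_{\langle k\rangle}$: (f) bounds $w$ above by every $u_i$, while (g) is active for the index (guaranteed to exist by (h)) with $\epsilon_i=1$. Then (d) and (e) together give $\sigma\leq\min\{\bar{u}_{i_1}+\Delta,\,u_{\langle k\rangle}\}$, and substituting these two bounds into (a) recovers exactly the right-hand side of (\ref{eq:seq2}a). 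Feasibility of the projected $(u_i)$ in $P'_k$ is then immediate from (f), (i), and (j).

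For the reverse inclusion, given $(u_i)$ feasible in $P'_k$, I would set $\delta_i:=1$ iff $u_i>\bar{u}_{i_1}+\Delta$, $v_i:=(u_i-\bar{u}_{i_1}-\Delta)^+$, $w:=\min_{i\in I_k}u_i$, $\sigma:=\min\{\bar{u}_{i_1}+\Delta,\,w\}$, pick a single $i^*$ achieving the minimum over $I_k$ and put $\epsilon_{i^*}:=1$ with all other $\epsilon_i:=0$, and set $z_k$ equal to the right-hand side of (\ref{eq:seq2}a). Then (a) holds with equality, and most of (b)--(j) are immediate from the definitions. The delicate cases are the big-$M$ constraints when the binary is ``off.'' For (c) with $\delta_i=0$ one needs $0\leq u_i-\bar{u}_{i_1}-\Delta+M$, which holds because $u_i\geq\bar{u}_{i_{k-1}}\geq\bar{u}_{i_1}$ and $M>\Delta$. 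For (g) with $\epsilon_i=0$ one needs $u_i-w\leq M$, which follows from $u_i-\bar{u}_{i_1}\leq M$ (constraint (\ref{eq:seq2}c)) together with $w\geq\bar{u}_{i_1}$.

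I expect this big-$M$ bookkeeping to be the main obstacle, because the argument tacitly relies on two structural facts: that $\bar{u}_{i_1}\leq\bar{u}_{i_{k-1}}$ (since the sequential procedure fixes utilities in non-decreasing order), and that the cap $u_i-\bar{u}_{i_1}\leq M$ built into $P'_k$ is inherited by the MILP via (j). Once these are made explicit, each big-$M$ inequality reduces to a short linear check, and the observation that the interlocking constraints (f), (g), (h) force $w$ to coincide with $u_{\langle k\rangle}$ rather than merely bound it, closes the correctness argument.
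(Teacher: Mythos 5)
Your proposal is correct and follows essentially the same route as the paper's proof: both directions are established by the same explicit assignment of $(\bm{v},\bm{\delta},\bm{\epsilon},w,\sigma)$ in the lifting direction and by the same key bounds $\sigma\leq\min\{\bar{u}_{i_1}+\Delta,u_{\langle k\rangle}\}$ and $v_i\leq(u_i-\bar{u}_{i_1}-\Delta)^+$ in the projection direction. Your explicit treatment of the big-$M$ cases (using $\bar{u}_{i_1}\leq\bar{u}_{i_{k-1}}$ and constraint (\ref{eq:seq2}c)) spells out what the paper dismisses as ``easily checked,'' but the argument is the same.
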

	
\begin{proof}
We first show that given any $(\vu,z_k)$ that is feasible for (\ref{eq:seq2}), where $u_{i_j}=\bar{u}_{i_j}$ for $j=1,\ldots, k-1$, there exist $\bm{v}, \bm{\delta},\bm{\epsilon},w,\sigma$ for which $(\vu,z_k,\bm{v},\bm{\delta},\bm{\epsilon},w,\sigma)$ is feasible for (\ref{eq:MILPmodel}).  Constraint (\ref{eq:MILPmodel}$j$) follows directly from (\ref{eq:seq2}$c$).  To satisfy the remaining constraints in (\ref{eq:MILPmodel}), we set 
\begin{equation}
\begin{array}{l}
{\ds
(\delta_i, \epsilon_i, v_i) = 
\left\{
\begin{array}{ll}
(0,0,0),          & \text{if } u_i - \bar{u}_{i_1} \leq \Delta \mbox{ and } i\neq\kappa \\ 
(0,1,0),   & \text{if } u_i - \bar{u}_{i_1} \leq \Delta \mbox{ and } i=\kappa  \\
(1,0,u_i-\bar{u}_{i_1}-\Delta), & \text{if } u_i - \bar{u}_{i_1} > \Delta \mbox{ and } i\neq\kappa  \\
(1,1,u_i-\bar{u}_{i_1}-\Delta),  & \text{if } u_i - \bar{u}_{i_1} > \Delta \mbox{ and } i=\kappa  
\end{array}
\right\}, \;i \in I_k
} \vs \\
w = u_{\kappa} \vs \\
\sigma = \min\{\bar{u}_{i_1}+\Delta, u_{\kappa}\}
\end{array} \label{eq:assign}
\end{equation}
where $\kappa$ is an arbitrarily chosen index in $I_k$ such that $u_{\kappa}=u_{\langle k\rangle}$.
It is easily checked that these assignments satisfy constraints ($b$)--($h$).  They satisfy ($i$) because (\ref{eq:seq2}$b$) implies that $u_{\kappa}\geq \bar{u}_{i_{k-1}}$.  To show they satisfy (\ref{eq:MILPmodel}$a$), we note that (\ref{eq:MILPmodel}$a$) is implied by (\ref{eq:seq2}$a$) because  $\min\{\bar{u}_{i_1}+\Delta,u_{\kappa}\}\leq \sigma$ and $(u_i-\bar{u}_{i_1}-\Delta)^+\leq v_i$ for $i\in I_k$.  Since (\ref{eq:seq2}$a$) is satisfied by $(\vu,z)$, it follows that (\ref{eq:MILPmodel}$a$) is satisfied by (\ref{eq:assign}).
		
For the converse, we show that for any $(\vu,z_k,\bm{v},\bm{\delta},\bm{\epsilon},w,\sigma)$ that satisfies (\ref{eq:MILPmodel}), $(\vu,z_k)$ satisfies (\ref{eq:seq2}). Constraint (\ref{eq:seq2}$b$) follows from (\ref{eq:MILPmodel}$f$) and (\ref{eq:MILPmodel}$i$), and (\ref{eq:seq2}$c$) is identical to (\ref{eq:MILPmodel}$j$).  To verify that (\ref{eq:seq2}$a$) is satisfied, we let $\kappa$ be the index for which $\epsilon_{\kappa}=1$, which is unique due to (\ref{eq:MILPmodel}$g$).  It suffices to show that (\ref{eq:MILPmodel}$a$) implies (\ref{eq:seq2}$a$) when the remaining constraints of (\ref{eq:MILPmodel}) are satisfied.  For this it suffices to show that 
\begin{equation}
\sigma \leq \min\{\bar{u}_{i_1}+\Delta, u_{\kappa}\}
\label{eq:proof1}
\end{equation}
\begin{equation}
v_i \leq (u_i - \bar{u}_{i_1} - \Delta)^+, \; i\in I_k
\label{eq:proof2}
\end{equation}
(\ref{eq:proof1}) follows from ($d$), ($e$), and ($f$) of (\ref{eq:MILPmodel}).  (\ref{eq:proof2}) follows from ($b$) and ($c$) of (\ref{eq:MILPmodel}).  This proves the theorem.
\end{proof}

\section{Valid Inequalities} \label{sec:sharpness}
	
In this section, we identify some valid inequalities that can strengthen the MILP model of $P'_k$ for $k\geq 2$.  The MILP model (\ref{eq:MILPmodel1}) for $P'_1$ is already sharp, meaning that the inequality constraints of the model describe the convex hull of the feasible set, and there is therefore no benefit in adding valid inequalities.  The sharpness property may be lost when budget constraints are added, but the resulting model may remain a relatively tight formulation.  When $n\leq 3$, the models $P'_k$ for $k\geq 2$ become sharp when the valid inequalities described below are added.  This is not true when $n\geq 4$, but the valid inequalities nonetheless tighten the formulation.  
	
The sharpness of the MILP model (\ref{eq:MILPmodel1}) for $P'_1$ is proved in \citeauthor{HooWil12} (\citeyear{HooWil12}).  We present a simpler proof in Appendix~2.\footnote{The proof in \citeauthor{HooWil12} (\citeyear{HooWil12}) can be simplified by using only the multipliers $\alpha_i = \frac{M}{n\Delta}\left(a_i-1+\frac{\Delta}{M}\right)$ for $i =1, \ldots, n$, because each $a_i\geq 1-\Delta/M$.  The multipliers $\beta_{ij}$ in their proof are unnecessary.}
	
\begin{theorem} \label{th:sharp1}
	The MILP model (\ref{eq:MILPmodel1}) is a sharp representation of $P'_1$ (\ref{eq:seq2-1}).
\end{theorem}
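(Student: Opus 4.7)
The goal is to show that the LP relaxation of (\ref{eq:MILPmodel1}), projected onto the original $(\vu, z_1)$ variables, coincides with the convex hull of the integer-feasible set of $P'_1$. My plan is to argue this via LP duality: for every linear objective $\sum_i a_i u_i + z_1$ whose maximum over the LP relaxation is finite, I will construct a dual-feasible multiplier solution whose dual value is attained by some integer-feasible primal solution. Since the LP relaxation weakly dominates the integer program, matching LP and integer optima for every bounded linear functional is equivalent to sharpness of the MILP formulation.

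First I would isolate the range of objective coefficients that must be considered. Inspecting constraints (b) and (c) of (\ref{eq:MILPmodel1}), the LP relaxation admits recession directions in which $u_i$, $v_i$, and $w$ grow at unit rate (with the $\delta_i$ fixed), and a parallel direction in which only $u_i$ grows with the slack absorbed through $v_i \leq u_i - \Delta\delta_i$. Boundedness of the LP forces $a_i \geq 1 - \Delta/M$ for every $i$, which is exactly the regime invoked in the footnote. With this reduction in hand, I would then choose the dual multipliers $\alpha_i = \tfrac{M}{n\Delta}\bigl(a_i - 1 + \tfrac{\Delta}{M}\bigr)$ on the upper half of constraint (b), propagate the companion multipliers on (a) and (c) by solving the dual equalities in $z_1$, $v_i$, and $w$, and set every remaining multiplier to zero. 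Nonnegativity of $\alpha_i$ is immediate from the reduction, and the dual inequalities for $u_i$ and $\delta_i$ reduce by direct substitution to the same bound $a_i \geq 1 - \Delta/M$. The simplification over \citeauthor{HooWil12}~(\citeyear{HooWil12}) comes precisely from observing that the dual constraints that previously demanded extra multipliers $\beta_{ij}$ are in fact slack under this choice, so the multipliers may be omitted.

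Finally I would construct a matching integer primal solution. Complementary slackness with the chosen $\alpha_i$ selects, for each objective $(a_i)$, one of the natural integer vertices of the feasible set: either the ``fair-region'' vertex where every $u_i$ equals a common value $u^*$ with $\delta_i = 0$, or a ``mixed'' vertex in which a subset of utilities is lifted to $u^* + M$ with $\delta_i = 1$. Evaluating the dual bound $(n-1)\Delta + \sum_i \alpha_i M$ and simplifying using the definition of $\alpha_i$ should recover exactly the objective value of the chosen primal vertex. The main obstacle is keeping the complementary-slackness bookkeeping transparent across the two disjunctive cases for each $i$ and verifying that the integer vertex identified by the $\alpha_i$ is simultaneously feasible and tight at all the constraints needed. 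Once that is confirmed, LP optimum equals integer optimum for every admissible objective, and the sharpness of (\ref{eq:MILPmodel1}) follows.
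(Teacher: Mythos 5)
Your overall plan---certify sharpness by matching LP and disjunctive-hull values for every linear functional, using the multipliers $\alpha_i=\frac{M}{n\Delta}\bigl(a_i-1+\frac{\Delta}{M}\bigr)$ and discarding the $\beta_{ij}$---is the same algebraic computation the paper performs (Appendix~2 proves the group version, of which this theorem is the case $s_i=1$, by showing every valid inequality $z_1\leq\va^T\vu+b$ is a nonnegative combination of the model's constraints). The genuine gap is in your reduction step. The claim that ``boundedness of the LP forces $a_i\geq 1-\Delta/M$ for every $i$'' is false, and the recession direction you invoke to prove it does not exist: if $u_i$ grows alone, the lower bound $u_i-\Delta\leq v_i$ in (\ref{eq:MILPmodel1}$b$) forces $v_i$ to grow with it, then (\ref{eq:MILPmodel1}$c$) forces $w$ to grow, which forces every other $v_j$ and hence every $u_j$ to grow. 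The recession cone of the LP relaxation is therefore only the uniform ray (all of $u_i,v_i,w$ at rate $1$, $z_1$ at rate at most $n$), and boundedness yields only the aggregate condition $\sum_j a_j\geq n$, not the individual bounds. Consequently there are bounded objectives for which some $\alpha_i<0$ and your dual certificate is infeasible; e.g.\ for $n=2$ the functional $z_1-0\cdot u_1-2u_2$ has a finite LP maximum but corresponds to $a_1=0<1-\Delta/M$.

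In the paper the bounds $a_i\geq 1-\Delta/M$ are not consequences of boundedness; they are necessary conditions for $z_1\leq\va^T\vu+b$ to be \emph{valid over the feasible set of $P'_1$}, obtained by substituting the feasible point $(\vu,z_1)=\bigl(M\bm{e}_i,\,(n-1)\Delta+M-\Delta\bigr)$ after normalizing $b=(n-1)\Delta$ (and $\sum_j a_j\geq n$ comes from the point $\vu=M\bm{1}$). That information is available precisely because the paper quantifies over valid inequalities of $P'_1$ rather than over bounded objectives of the relaxation; in your support-function formulation the objectives violating these bounds must still be handled---their LP optima sit on different faces and need different multipliers---and your proposal provides no certificate for them. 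Replacing the boundedness argument by the validity-plus-normalization argument, i.e.\ switching to the ``every valid inequality is a surrogate of the model's inequalities'' formulation of sharpness, closes the gap and lands you exactly on the paper's proof.
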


We now describe a class of valid inequalities that can be added to the MILP model (\ref{eq:MILPmodel}) of $P'_k$ for $k\geq 2$ to tighten the formulation.   

\begin{theorem} \label{th:valid}
	The following inequalities are valid for $P'_k$ for $k\geq 2$:
	\begin{align}
	& z_k \leq \sum_{i\in I_k} u_i \label{eq:valid0} \\
	& z_k \leq (n-k+1)u_i 
	+ \beta \hspace{-1.5ex} \sum_{j\in I_k\setminus\{i\}} \hspace{-1.5ex} (u_j - \bar{u}_{i_{k-1}}), \;\; i\in I_k \label{eq:valid}
	\end{align}
	where
	\[
	\beta = \frac{M-\Delta}{M-(\bar{u}_{i_{k-1}}-\bar{u}_{i_1})} = \Big(1-\frac{\Delta}{M}\Big)\Big(1-\frac{\bar{u}_{i_{k-1}}-\bar{u}_{i_1}}{M}\Big)^{-1}
	\]
\end{theorem}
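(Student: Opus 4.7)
The plan is to reduce both inequalities to the single defining constraint $z_k \le \bar F_k(\vu)$ of $P'_k$, i.e. to
\[
z_k \le (n-k+1)\min\{\bar u_{i_1}+\Delta,\,u_{\langle k\rangle}\} + \sum_{j\in I_k}(u_j-\bar u_{i_1}-\Delta)^+,
\]
together with the structural facts $|I_k|=n-k+1$, $u_{\langle k\rangle}=\min_{j\in I_k}u_j$, $u_j\ge\bar u_{i_{k-1}}$, and $u_j\le\bar u_{i_1}+M$ for $j\in I_k$, plus the fair-region property $\bar u_{i_{k-1}}-\bar u_{i_1}\le \Delta$. For (\ref{eq:valid0}) I would first observe the identity $u_j=\min\{u_j,\bar u_{i_1}+\Delta\}+(u_j-\bar u_{i_1}-\Delta)^+$, which holds for every real $u_j$. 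Summing over $j\in I_k$ and using $\min\{u_j,\bar u_{i_1}+\Delta\}\ge\min\{u_{\langle k\rangle},\bar u_{i_1}+\Delta\}$ (since $u_j\ge u_{\langle k\rangle}$ for $j\in I_k$) yields
\[
\sum_{j\in I_k}u_j \ge (n-k+1)\min\{u_{\langle k\rangle},\bar u_{i_1}+\Delta\} + \sum_{j\in I_k}(u_j-\bar u_{i_1}-\Delta)^+ \ge z_k,
\]
which is (\ref{eq:valid0}).

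For (\ref{eq:valid}) the central step is the pointwise inequality
\[
(u_j-\bar u_{i_1}-\Delta)^+ \le \beta\,(u_j-\bar u_{i_{k-1}}) \quad\text{for all } j\in I_k.
\]
When $u_j\le\bar u_{i_1}+\Delta$ the left side vanishes and the right side is nonnegative (since $u_j\ge \bar u_{i_{k-1}}$ and $\beta\ge 0$). When $u_j>\bar u_{i_1}+\Delta$, both sides are affine functions of $u_j$, with slopes $1$ and $\beta$ respectively, and $\beta\le 1$ because $\bar u_{i_{k-1}}-\bar u_{i_1}\le\Delta$. A direct algebraic check—substituting $u_j=\bar u_{i_1}+M$ and simplifying using the definition $\beta=(M-\Delta)/(M-(\bar u_{i_{k-1}}-\bar u_{i_1}))$—shows that the two affine functions agree at the upper endpoint $u_j=\bar u_{i_1}+M$; since the left side has the larger slope and meets the right side at that endpoint, the bound holds on the whole interval $[\bar u_{i_1}+\Delta,\,\bar u_{i_1}+M]$. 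This confirms that $\beta$ is precisely the value making the inequality tight at the recession-cone boundary.

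With the pointwise lemma in hand I would isolate the $j=i$ term of the constraint and apply the lemma to the remaining terms, obtaining
\[
z_k \le (n-k+1)\min\{\bar u_{i_1}+\Delta,\,u_{\langle k\rangle}\} + (u_i-\bar u_{i_1}-\Delta)^+ + \beta\sum_{j\in I_k\setminus\{i\}}(u_j-\bar u_{i_{k-1}}).
\]
Because $u_i\ge u_{\langle k\rangle}$, I have $\min\{\bar u_{i_1}+\Delta,u_{\langle k\rangle}\}\le\min\{\bar u_{i_1}+\Delta,u_i\}$, and combining this with $n-k+1\ge 1$ and the identity $\min\{\bar u_{i_1}+\Delta,u_i\}+(u_i-\bar u_{i_1}-\Delta)^+=u_i$ collapses the first two terms into $(n-k+1)u_i$, establishing (\ref{eq:valid}). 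The only real obstacle I anticipate is the algebraic verification of the pointwise lemma—specifically, showing that the given formula for $\beta$ is exactly what is needed for equality at $u_j=\bar u_{i_1}+M$; the rest is bookkeeping that flows from the split-at-$\bar u_{i_1}+\Delta$ identity used in the proof of (\ref{eq:valid0}).
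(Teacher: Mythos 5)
Your proof is correct and follows essentially the same route as the paper's: both reduce the claim to the single constraint (\ref{eq:seq2}$a$) and both rest on the same key pointwise bound $\beta(u_j-\bar{u}_{i_{k-1}})\geq u_j-\bar{u}_{i_1}-\Delta$, derived from $u_j\leq\bar{u}_{i_1}+M$ and the definition of $\beta$ (your endpoint-plus-slope verification is just a repackaging of the paper's algebraic identity). Your use of the identity $\min\{\bar{u}_{i_1}+\Delta,u_i\}+(u_i-\bar{u}_{i_1}-\Delta)^+=u_i$ lets you avoid the paper's two-case split on $u_i$ and makes explicit the hypotheses $\beta\leq 1$ and $\bar{u}_{i_{k-1}}-\bar{u}_{i_1}\leq\Delta$ that the paper uses only implicitly, but this is a streamlining rather than a genuinely different argument.
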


\begin{proof}

It suffices to show that for any $(\vu,z_k,\bm{v},\bm{\delta},\bm{\epsilon},w)$ that satisfies (\ref{eq:MILPmodel}), where $u_{i_j}=\bar{u}_{i_j}$ for \mbox{$j=1,\ldots,k-1$}, the vector $\vu$ satisfies (\ref{eq:valid0}) and (\ref{eq:valid}).  Since we know from Theorem~\ref{th:correct} that $\vu$ is feasible in (\ref{eq:seq2}), it suffices to show that (\ref{eq:seq2}) implies (\ref{eq:valid0}) and (\ref{eq:valid}).  To derive (\ref{eq:valid0}), we write (\ref{eq:seq2}$a$) as
\begin{equation}
z_k \leq \sum_{i\in I_k} \Big( \min\{\bar{u}_{i_1}+\Delta, u_{\langle k\rangle}\}
+(u_i - \bar{u}_{i_1} - \Delta)^+ \Big)
\end{equation}
For any term $i$ in the summation, we consider two cases.  If $u_i\leq \bar{u}_{i_1}+\Delta$, then $u_{\langle k\rangle} \leq \bar{u}_{i_1}+\Delta$ (because $u_{\langle k\rangle}\leq u_i$), and the term reduces to $u_{\langle k\rangle}$.  If $u_i> \bar{u}_{i_1}+\Delta$, term $i$ becomes
\[
\min\{\bar{u}_{i_1}+\Delta, u_{\langle k\rangle}\}
+(u_i - \bar{u}_{i_1} - \Delta)
= \min\{0, u_{\langle k\rangle} - \bar{u}_{i_1} - \Delta\} + u_i \leq u_i
\]
In either case, term $i$ is less than or equal to $u_i$, and (\ref{eq:valid0}) follows.

To establish (\ref{eq:valid}), it is enough to show that (\ref{eq:valid}) is implied by (\ref{eq:seq2}) for each $i\in I_k$.  We consider the same two cases as before.

Case 1: $u_i-\bar{u}_{i_1}\leq\Delta$, which implies $u_{\langle k\rangle}-\bar{u}_{i_1}\leq\Delta$.  Since $\vu$ satisfies (\ref{eq:seq2}$a$), we have
\begin{equation}
z_k \leq (n-k+1)u_{\langle k\rangle} 
+ \hspace{-2ex} \sum_{\substack{j\in I_k\setminus\{i\}\\u_j-\bar{u}_{i_1}>\Delta}} \hspace{-2ex} (u_j-\bar{u}_{i_1}-\Delta)
\label{eq:proof040}
\end{equation}
It suffices to show that this implies 
\begin{equation}
z_k \leq (n-k+1)u_i +
\beta \Big( \hspace{-2ex} \sum_{\substack{j\in I_k\setminus\{i\}\\u_j-\bar{u}_{i_1}\leq\Delta}} \hspace{-2.5ex} (u_j-\bar{u}_{i_{k-1}})
\hspace{0.5ex} + \hspace{-2ex} \sum_{\substack{j\in I_k\setminus\{i\}\\u_j-\bar{u}_{i_1}>\Delta}} \hspace{-2.5ex} (u_j-\bar{u}_{i_{k-1}}) \Big),
\label{eq:proof041}
\end{equation}
because (\ref{eq:proof041}) is equivalent to the desired inequality (\ref{eq:valid}).  But (\ref{eq:proof040}) implies (\ref{eq:proof041}) because $u_{\langle k\rangle} \leq u_i$ by definition of $u_{\langle k\rangle}$, $u_j-\bar{u}_{i_{k-1}}\geq 0$ for all $j\in I_k$ due to (\ref{eq:seq2}$b$), and it can be shown that
\begin{equation}
\beta (u_j-\bar{u}_{i_{k-1}}) \geq u_j - \bar{u}_{i_1} - \Delta
\label{eq:proof401}
\end{equation}
for any $j\in I_k$.  To show (\ref{eq:proof401}), we note that the definition of $\beta$ implies the following identity:
\[
\bar{u}_{i_1}-\bar{u}_{i_{k-1}} +\Delta = (1-\beta)(M + \bar{u}_{i_1} - \bar{u}_{i_{k-1}}).
\]
Adding $(1-\beta)\bar{u}_{i_{k-1}}$ to both sides, we obtain
\begin{equation}
\bar{u}_{i_{k-1}} - \beta \bar{u}_{i_{k-1}} + \Delta = (1-\beta)(M+\bar{u}_{i_1}) \geq (1-\beta)u_j,
\label{eq:proof402}
\end{equation}
where the inequality holds because $M+\bar{u}_{i_1}\geq u_j$ due to (\ref{eq:seq2}$c$).  We obtain (\ref{eq:proof401}) by rearranging (\ref{eq:proof402}).  

Case 2: $u_i-\bar{u}_{i_1}> \Delta$.  It again suffices to show that (\ref{eq:seq2}) implies (\ref{eq:proof041}).  Due to the case hypothesis, we have from (\ref{eq:seq2}$a$) that 
\[
z_k \leq (n-k+1)\min\{\bar{u}_1+\Delta,u_{\langle k\rangle}\}
+ (u_i-\bar{u}_{i_1}-\Delta) 
+ \hspace{-2.5ex} \sum_{\substack{j\in I_k\setminus\{i\}\\u_j-\bar{u}_{i_1}>\Delta}} \hspace{-2.5ex} (u_j-\bar{u}_{i_1}-\Delta)^+
\]
This can be written
\[
z_k \leq (n-k+1)u_i 
- (n-k+1)\Big( u_i - \min\{\bar{u}_1+\Delta,u_{\langle k\rangle}\} \Big)
+ (u_i-\bar{u}_{i_1}-\Delta) 
+ \hspace{-2.5ex} \sum_{\substack{j\in I_k\setminus\{i\}\\u_j-\bar{u}_{i_1}>\Delta}} \hspace{-2.5ex} (u_j-\bar{u}_{i_1}-\Delta)^+
\]
which can be written
\begin{equation}
z_k \leq (n-k+1)u_i 
- (n-k)\Big( u_i - \min\{\bar{u}_1+\Delta,u_{\langle k\rangle}\} \Big)
- \Big( \bar{u}_1 + \Delta - \min\{\bar{u}_1+\Delta,u_{\langle k\rangle} \} \Big)
+ \hspace{-2.5ex} \sum_{\substack{j\in I_k\setminus\{i\}\\u_j-\bar{u}_{i_1}>\Delta}} \hspace{-2.5ex} (u_j-\bar{u}_{i_1}-\Delta)^+
\label{eq:proof45}
\end{equation}
The second term is nonpositive because $u_i>\bar{u}_1+\Delta$ by the case hypothesis, and $u_i\geq u_{\langle k\rangle}$.  The third term is clearly nonpositive.  Thus (\ref{eq:proof45}) implies (\ref{eq:proof041}) because $u_j-\bar{u}_{i_{k-1}}\geq 0$ and (\ref{eq:proof401}) holds for $j\in I_k$ as before.
\end{proof}

	
\section{Modeling Groups of Individuals} \label{sec:group}

In many applications, utility is naturally allocated to groups rather than individuals, where individuals within each group receive an equal allocation.  This occurs, for example, in the health care example we discuss in Section~\ref{sec:exp}, in which groups correspond to classes of patients who have the same disease and a similar prognosis.  In other applications, the number of individuals may be too large for practical solution, since problem $P_i$ must be solved for each individual $i$.  In such cases, individuals can typically be grouped into a few classes within which the individual differences are small or irrelevant, thus making the problem tractable and the results easier to digest.
We therefore modify the above SWFs to accommodate groups rather than individuals.  Since the proofs are similar to those for individuals, we relegate the proofs to Appendix~2.

We suppose there are $n$ groups of possibly different sizes.  We let $u_i$ denote the utility of each individual in group $i$ and $s_i$ the number of individuals in the group.  We can modify $F_1$ for groups in the same fashion as \citeauthor{HooWil12}, namely by replacing the utility variables within group $i$ with the same variable $u_i$.  If $\vu=(u_1,\ldots,u_n)$, this yields the SWF 
\begin{equation}
    \label{eq:group1}
    G_1(\vu) = \Big(\sum_{i=1}^n s_i - 1\Big)\Delta + \Big(\sum_{i=1}^n s_i\Big) u_{\langle 1 \rangle} + \sum_{i=1}^n s_i (u_i - u_{\langle 1 \rangle}-\Delta)^+
\end{equation}
Hooker and Williams prove the following. 
\begin{theorem} \label{th:MILPGroup1}
The problem $P_1^{'}$, modified for groups, is equivalent to the MILP model
\begin{equation}
    \begin{array}{l}
    \max \; z_1 \vs \\
    {\ds
    	z_1 \leq \Big(\sum_{i=1}^n s_i - 1\Big)\Delta + \sum_{i=1}^n s_iv_i 
    } \vs \\
    u_i - \Delta \leq v_i \leq u_i - \Delta\delta_i, \;\; i=1,\ldots,n \vs \\
    w \leq v_i \leq w + (M-\Delta)\delta_i, \;\; i=1, \ldots, n \vs \\
    u_i\geq 0, \;\; \epsilon_i, \delta_i \in \{0,1\}, \; i=1, \ldots, n
    \end{array} \label{eq:MILP1group-new}
\end{equation}
\end{theorem}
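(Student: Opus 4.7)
The plan is to mirror the proof of the individual case (the MILP for $P'_1$), since model (\ref{eq:MILP1group-new}) differs from (\ref{eq:MILPmodel1}) only in the coefficients $s_i$ attached to each $v_i$ in the objective; the constraints on $v_i, w, \delta_i$ are identical. So I would first argue that the feasible region of (\ref{eq:MILP1group-new}) in the variables $(\vu,\bm{v},w,\bm{\delta})$ is exactly the same as in the individual case, and then show that the maximum value of $\sum_i s_i v_i$ over this region equals $(\sum_i s_i)u_{\langle 1\rangle} + \sum_i s_i (u_i - u_{\langle 1\rangle} - \Delta)^+$, which yields $G_1(\vu)$ after adding the constant $\bigl(\sum_i s_i - 1\bigr)\Delta$.

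To carry this out, I would first analyze the constraints case by case on $\delta_i$. If $\delta_i=0$, the constraints force $v_i = w$ and $u_i - \Delta \leq w \leq u_i$; if $\delta_i=1$, they force $v_i = u_i - \Delta$ with $w \leq u_i - \Delta$. So in either case $v_i = \max\{w, u_i - \Delta\}$, with $\delta_i$ choosable precisely when $w = u_i - \Delta$. Next, note that $\sum_i s_i v_i = \sum_i s_i \max\{w, u_i - \Delta\}$ is nondecreasing in $w$, while $w$ is bounded above by $u_i$ whenever $\delta_i=0$, and bounded above by $u_i - \Delta$ whenever $\delta_i=1$. Choosing $\delta_i = 0$ for indices $i$ with $u_i \leq u_{\langle 1\rangle} + \Delta$ and $\delta_i = 1$ otherwise, we can take $w = u_{\langle 1\rangle}$; this is the maximum feasible value of $w$ since $w \leq u_i$ is forced for the index attaining $u_{\langle 1\rangle}$.

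With $w = u_{\langle 1\rangle}$, each $v_i = \max\{u_{\langle 1\rangle}, u_i - \Delta\} = u_{\langle 1\rangle} + (u_i - u_{\langle 1\rangle} - \Delta)^+$, so that $\sum_i s_i v_i = (\sum_i s_i)u_{\langle 1\rangle} + \sum_i s_i (u_i - u_{\langle 1\rangle} - \Delta)^+$. Adding $(\sum_i s_i - 1)\Delta$ gives exactly $G_1(\vu)$, proving that the optimal $z_1$ at a given $\vu$ equals $G_1(\vu)$. Finally, I would verify in the other direction that for any $\vu$, the assignments just described yield a feasible point of (\ref{eq:MILP1group-new}) attaining $z_1 = G_1(\vu)$, so the MILP model correctly represents the problem.

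I do not expect any serious obstacle: the proof is essentially the Hooker--Williams argument with an extra linear weighting, and the $s_i$'s enter only as positive constants in the objective. The only mildly delicate step is verifying that $w = u_{\langle 1\rangle}$ is simultaneously achievable and optimal; this is handled by the explicit choice of $\bm{\delta}$ above, exactly as in the individual case.
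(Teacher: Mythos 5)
Your proof is correct. Note that the paper does not actually prove this statement itself---it defers to Hooker and Williams (2012)---so the closest in-paper comparison is the proof of the analogous correctness result for $k\geq 2$ (Theorem~\ref{th:correct} and, for groups, Theorem~\ref{th:MILPGroup} in Appendix~2). Those proofs are organized as a two-way feasibility correspondence: one direction exhibits an explicit witness assignment of $(\bm{v},\bm{\delta},\bm{\epsilon},w,\sigma)$ for a given feasible $(\vu,z_k)$ and checks each constraint, and the other direction shows the MILP constraints imply the original objective bound. You instead fully characterize the projection of the MILP's feasible set: the dichotomy on $\delta_i$ forces $v_i=\max\{w,u_i-\Delta\}$ together with $w\leq u_{\langle 1\rangle}$ and $u_i\leq w+M$, and then monotonicity in $w$ pins the optimum at $w=u_{\langle 1\rangle}$, recovering $G_1(\vu)$ exactly. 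The two routes establish the same hypograph identity; yours is somewhat more explanatory (it shows \emph{why} the auxiliary variables reproduce $(u_i-u_{\langle 1\rangle}-\Delta)^+$ and where the max over $w$ is attained), while the paper's witness-checking style generalizes more mechanically to the $k\geq 2$ models where the extra variables $\sigma$ and $\bm{\epsilon}$ play a genuine role. One detail worth making explicit in your write-up is that the constraint $u_i-u_j\leq M$ of $P'_1$ is recovered from the MILP side by combining $w\leq u_j$ with $u_i\leq w+M$ (using $M>\Delta$ in the $\delta_i=0$ case), so the $\vu$-projections of the two feasible sets genuinely coincide; you use this fact implicitly when you invoke $u_i\leq u_{\langle 1\rangle}+M$ to justify feasibility of $\delta_i=1$.
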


If we take a similar approach to modifying $\bar{F}_k(\vu)$ for $k\geq 2$ to accommodate groups, we obtain the following (as shown in Appendix~2):
\begin{equation}
    \label{eq:groupk}
    \bar{G}_k(\vu) =
    \Big( \sum_{i=k}^n s_{\langle i\rangle} \Big)
    \min\{u_{\langle 1\rangle} + \Delta, u_{\langle k\rangle}\}
    + \sum_{i=k}^n s_{\langle i\rangle}(u_{\langle i\rangle} - u_{\langle 1\rangle} - \Delta)^+
\end{equation}
As before, utilities $u_{i_1},\ldots, u_{i_{k-1}}$ are fixed by the solutions of $\bar{P}_1,\ldots,\bar{P}_{k-1}$.  Also $u_{\langle k\rangle}$ is the smallest utility among $\{u_i \;|\; i\in I_k\}$, where again $I_k=\{1,\ldots, n\}\setminus \{i_1,\ldots,i_{k-1}\}$.  If there is a tie for the smallest, we resolve the tie arbitrarily by choosing, say, $u_i$ as the smallest, in which case $s_{\langle k\rangle}=s_i$.  It is clear on inspection that $\bar{G}_k(\vu)$ is well defined because its value is not affected by how the tie is resolved.  Contours for $\bar{G}_2(0,u_2,u_3)$ are illustrated in Fig.~\ref{fig:3personGroups}.  Since $u_{\langle 1\rangle},\ldots,u_{\langle k-1 \rangle}$ are fixed in $\bar{P}_k$, we need only show continuity with respect to the remaining utilities.

\begin{figure}[!t]
	\centering
	\includegraphics[trim=120 450 250 120, clip, scale=0.9]{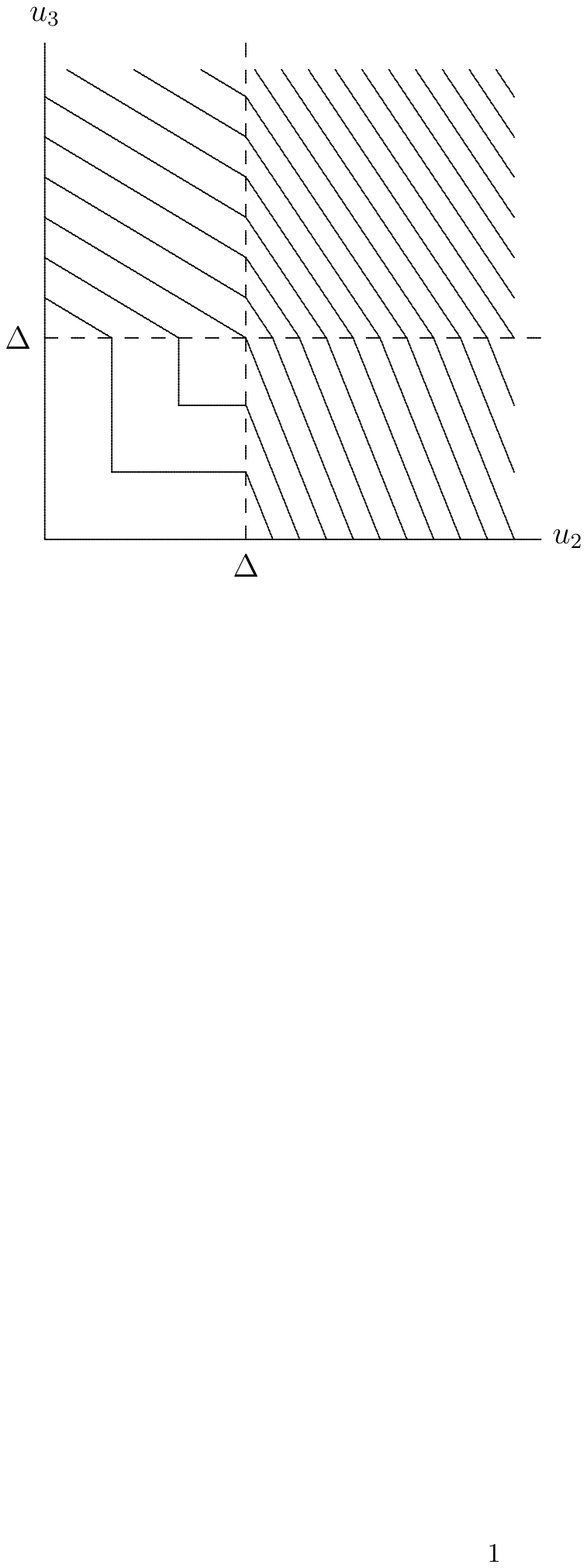}
	\caption{Contours of $\bar{G}_2(0,u_2,u_3)$ with $(s_2,s_3)=(2,3)$, $\Delta = 3$, and contour interval 5.}
	\label{fig:3personGroups}
\end{figure}

\begin{theorem} \label{th:continuity}
	The functions $\bar{G}_k(\vu)$ are continuous in $u_{\langle k \rangle}, \ldots, u_{\langle n \rangle}$ for $k = 1,\ldots,n$.
\end{theorem}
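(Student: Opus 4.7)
The plan is to split by cases on $k$. For $k=1$, $G_1(\vu)$ as given in \eqref{eq:group1} is a sum of a constant, a multiple of the order statistic $u_{\langle 1\rangle}$, and weighted positive-part terms $s_i(u_i-u_{\langle 1\rangle}-\Delta)^+$. Order statistics are continuous functions of the utility vector, the positive part is the pointwise maximum of two continuous functions, and finite sums of continuous functions are continuous, so $G_1$ is continuous throughout, exactly as in the proof of Theorem~\ref{thm:continuity}.

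The interesting case is $k\geq 2$. The apparent obstacle is that the coefficients $s_{\langle i\rangle}$ are assigned by sort order of the utilities and can be swapped when two utilities cross, which at first glance threatens continuity. The key observation is that this swapping is harmless: at a tie $u_{\langle i\rangle}=u_{\langle i+1\rangle}$, the corresponding terms $s_{\langle i\rangle}(u_{\langle i\rangle}-u_{\langle 1\rangle}-\Delta)^+$ and $s_{\langle i+1\rangle}(u_{\langle i+1\rangle}-u_{\langle 1\rangle}-\Delta)^+$ combine into $(s_{\langle i\rangle}+s_{\langle i+1\rangle})(u_{\langle i\rangle}-u_{\langle 1\rangle}-\Delta)^+$, which is invariant to how the tie is broken. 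I would formalize this by establishing the identity
\[
\bar{G}_k(\vu)
= \Big(\sum_{j\in I_k} s_j\Big)\min\{u_{\langle 1\rangle}+\Delta,\, u_{\langle k\rangle}\}
+ \sum_{j\in I_k} s_j\bigl(u_j-u_{\langle 1\rangle}-\Delta\bigr)^+,
\]
where $I_k=\{1,\ldots,n\}\setminus\{i_1,\ldots,i_{k-1}\}$. This rewriting is valid because the fixed utilities $u_{i_1},\ldots,u_{i_{k-1}}$ occupy positions $\langle 1\rangle,\ldots,\langle k-1\rangle$, so the ordered positions $\langle k\rangle,\ldots,\langle n\rangle$ are exactly a permutation of the groups in $I_k$; summing a quantity over $\langle k\rangle,\ldots,\langle n\rangle$ equals summing the same quantity directly over $I_k$, and the total size factor $\sum_{i=k}^n s_{\langle i\rangle}=\sum_{j\in I_k} s_j$ becomes a constant.

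Once the right-hand side is expressed in this order-free form, continuity follows immediately from elementary closure properties. The coefficient $\sum_{j\in I_k}s_j$ is a constant; the minimum $u_{\langle k\rangle}=\min_{j\in I_k}u_j$ is continuous as the pointwise minimum of continuous functions, as is $\min\{u_{\langle 1\rangle}+\Delta,\,u_{\langle k\rangle}\}$; and each $\bigl(u_j-u_{\langle 1\rangle}-\Delta\bigr)^+$ is continuous. Hence $\bar{G}_k$ is a continuous function of $(u_j)_{j\in I_k}$, which is precisely continuity in $u_{\langle k\rangle},\ldots,u_{\langle n\rangle}$ with $u_{\langle 1\rangle},\ldots,u_{\langle k-1\rangle}$ held fixed. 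The only step that requires care is justifying the rewriting above, since the original definition obscures the fact that the summation is fundamentally over groups rather than over sorted positions.
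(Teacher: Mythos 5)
Your proof is correct and follows essentially the same route as the paper's: decompose $\bar{G}_k$ into its terms and invoke closure of continuity under finite sums, minima, and maxima of continuous functions. Your order-free rewriting of the summation over $I_k$ is a worthwhile extra step of rigor --- the paper's proof simply asserts that each summand is ``a constant times the maximum of a continuous expression and zero,'' glossing over the fact that the coefficients $s_{\langle i\rangle}$ are assigned by sort order and could in principle swap when utilities cross; your identity makes precise why that is harmless.
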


The group version of $P'_k$ becomes 
\begin{equation}
\begin{array}{ll}
\max \; z_k \vs \\
{\ds
	z_k \leq \Big(\sum_{i\in I_k} s_i\Big) \min\{\bar{u}_{i_1} + \Delta, u_{\langle k\rangle} \}
	+ \sum_{i\in I_k} s_i(u_i-\bar{u}_{i_1}-\Delta)^+
} & (a) \vs \\
u_i \geq \bar{u}_{i_{k-1}}, \;\; i\in I_k & (b) \vs \\
u_i-\bar{u}_{i_1} \leq M, \;\; i\in I_k & (c) 
\end{array} \label{eq:seq2-group}
\end{equation}
The MILP model for (\ref{eq:seq2-group}) is very similar to the one we developed for (\ref{eq:seq2}):
\begin{equation}
    \begin{array}{ll}
    \max \; z_k \vs \\
    {\ds
    	z_k \leq \big(\sum_{i\in I_k} s_i\Big) \sigma + \sum_{i\in I_k} s_iv_i 
    } & (a) \vs \\
    (\ref{eq:MILPmodel}b)\mbox{--}(\ref{eq:MILPmodel}j) & (b)\mbox{--}(j) \vs \\
    \delta_i, \epsilon_i \in \{0,1\}, \; i\in I_k
    \end{array}
    \label{eq:MILPkgroup-new}
\end{equation}

\begin{theorem} \label{th:MILPGroup}
The problem $P_k'$, reformulated for groups, is equivalent to (\ref{eq:MILPkgroup-new}) for $k=2,\ldots,n$.
\end{theorem}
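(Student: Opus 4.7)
The plan is to adapt the argument of Theorem~\ref{th:correct} by carrying the group sizes $s_i$ through the derivation. The MILP model (\ref{eq:MILPkgroup-new}) differs from (\ref{eq:MILPmodel}) only in constraint $(a)$, where each summand is weighted by $s_i$ and $\sigma$ is weighted by $\sum_{i\in I_k} s_i$. Likewise, (\ref{eq:seq2-group}) differs from (\ref{eq:seq2}) only in the analogous $s_i$-weighted constraint $(a)$. Since constraints $(b)$--$(j)$ are identical in the individual and group versions of both models, only the weighted constraint $(a)$ requires rechecking.

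For the forward direction, I would begin with $(\vu, z_k)$ feasible for (\ref{eq:seq2-group}) and adopt the same assignment rule (\ref{eq:assign}) used in Theorem~\ref{th:correct}, choosing $\kappa \in I_k$ so that $u_\kappa = u_{\langle k\rangle}$. Constraints $(b)$--$(j)$ of (\ref{eq:MILPkgroup-new}) then hold by the same verifications given in Theorem~\ref{th:correct}, since none of those constraints involves $s_i$. For (\ref{eq:MILPkgroup-new})$(a)$, the assignment yields $\sigma = \min\{\bar{u}_{i_1}+\Delta,\, u_{\langle k\rangle}\}$ and $v_i \geq (u_i - \bar{u}_{i_1} - \Delta)^+$ for each $i \in I_k$. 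Because every $s_i \geq 0$, weighting these pointwise inequalities by $s_i$ and summing preserves their direction, so combining with (\ref{eq:seq2-group})$(a)$ yields the required upper bound on $z_k$.

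For the reverse direction, I would start with $(\vu, z_k, \bm{v}, \bm{\delta}, \bm{\epsilon}, w, \sigma)$ feasible for (\ref{eq:MILPkgroup-new}). Constraints (\ref{eq:seq2-group})$(b)$ and $(c)$ follow immediately from the unchanged constraints $(f)$, $(i)$, $(j)$. For (\ref{eq:seq2-group})$(a)$, let $\kappa$ be the unique index in $I_k$ with $\epsilon_\kappa = 1$, whose uniqueness follows from $(g)$--$(h)$. Exactly as in Theorem~\ref{th:correct}, constraints $(d)$, $(e)$, $(f)$ give $\sigma \leq \min\{\bar{u}_{i_1}+\Delta,\, u_\kappa\} = \min\{\bar{u}_{i_1}+\Delta,\, u_{\langle k\rangle}\}$, and constraints $(b)$, $(c)$ give $v_i \leq (u_i - \bar{u}_{i_1} - \Delta)^+$ for $i \in I_k$. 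Scaling these inequalities by the nonnegative weights $s_i$ and plugging into (\ref{eq:MILPkgroup-new})$(a)$ then delivers (\ref{eq:seq2-group})$(a)$.

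The main, and essentially only, obstacle is bookkeeping: one must confirm that introducing the multiplicative weights $s_i$ does not disturb any of the case analyses underlying the pointwise bounds $\sigma \leq \min\{\bar{u}_{i_1}+\Delta,\, u_\kappa\}$ and $v_i \leq (u_i - \bar{u}_{i_1} - \Delta)^+$. Since every such bound is established index-by-index and each $s_i$ is nonnegative, the weighted versions follow directly, and the proof reduces to the scalar case already settled in Theorem~\ref{th:correct}.
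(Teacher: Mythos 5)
Your proof is correct and takes essentially the same route as the paper's: the paper likewise reuses the assignment (\ref{eq:assign}) and the constraint-by-constraint verification from Theorem~\ref{th:correct}, noting that only constraint $(a)$ changes and that the pointwise bounds $\sigma \leq \min\{\bar{u}_{i_1}+\Delta,\, u_{\kappa}\}$ and $v_i \leq (u_i-\bar{u}_{i_1}-\Delta)^+$ survive weighting by the nonnegative $s_i$. No gaps.
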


\citeauthor{HooWil12} (\citeyear{HooWil12}) prove that (\ref{eq:MILP1group-new}) is a sharp representation of $P_1^{'}$ reformulated for groups.  We present a simpler proof in Appendix~2.

\begin{theorem} \label{th:sharp1-Group}
	The MILP model (\ref{eq:MILP1group-new}) is a sharp representation of $P'_1$ reformulated for groups.
\end{theorem}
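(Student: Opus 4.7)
The plan is to mirror the simplified proof of Theorem~\ref{th:sharp1} (outlined in the footnote and carried out in Appendix~2), rescaling each step by the group weights $s_i$. The key observation is that model (\ref{eq:MILP1group-new}) differs from (\ref{eq:MILPmodel1}) in only two respects: the coefficient of $v_i$ in the objective is $s_i$ rather than $1$, and the additive constant is $(\sum_i s_i-1)\Delta$ rather than $(n-1)\Delta$. The constraints defining the auxiliary variables $v_i,w,\delta_i$ in terms of $\vu$ are unchanged. Consequently the combinatorial structures of the LP relaxation of (\ref{eq:MILP1group-new}) and of $\mathrm{hyp}(G_1)$ mirror the non-group case exactly, with the group sizes entering only through the objective weights.

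First I would dispatch the easy inclusion $\mathrm{conv}(\mathrm{hyp}(G_1)) \subseteq \mathrm{proj}_{(\vu,z_1)}(\mathrm{LP})$, which follows because every integer-feasible point of the MILP projects onto a point of $\mathrm{hyp}(G_1)$. The substantive direction is to show that every nontrivial facet-defining inequality of $\mathrm{conv}(\mathrm{hyp}(G_1))$ is implied by the LP relaxation of (\ref{eq:MILP1group-new}). Writing $G_1(\vu)=(\sum_i s_i-1)\Delta + \sum_i s_i v_i$ with $v_i=\max\{u_{\langle 1\rangle},u_i-\Delta\}$, the hypograph decomposes into the same pieces as in the non-group analysis, each piece determined by whether $v_i$ binds at $u_{\langle 1\rangle}$ or at $u_i-\Delta$ for each $i$. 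The facets accordingly take the form $z_1 \leq \sum_i a_i u_i + c$ with $a_i = s_i \hat{a}_i$, where the reduced coefficients $\hat{a}_i$ have exactly the form appearing in the proof of Theorem~\ref{th:sharp1} and in particular satisfy $\hat{a}_i \geq 1-\Delta/M$.

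Next I would derive each such facet from the LP relaxation using nonnegative multipliers. The footnote prescribes, in the non-group case, multipliers $\alpha_i = \tfrac{M}{n\Delta}(\hat{a}_i - 1 + \Delta/M)$ on the upper-bound constraints $v_i \leq u_i - \Delta\delta_i$, together with straightforward multipliers on the remaining constraints. For the group version I would use the rescaled multipliers $s_i \alpha_i$ on those same constraints (and analogously rescaled multipliers on $v_i \geq u_i-\Delta$, $v_i \geq w$, $v_i \leq w+(M-\Delta)\delta_i$, and $\delta_i \leq 1$), so that the weighted sum produces the $s_i v_i$ contributions required by the group objective. Nonnegativity of $s_i \alpha_i$ follows from $s_i>0$ together with the same bound $\hat{a}_i \geq 1-\Delta/M$ that underpins the non-group argument.

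The main obstacle is purely bookkeeping: checking that the $s_i$ scaling propagates so as to reproduce the constant $(\sum_i s_i-1)\Delta$ alongside the $s_i$-weighted facet coefficients, and that every auxiliary multiplier remains nonnegative once rescaled. Because the underlying linear identity depends only on the constraint system, which is unchanged from the non-group case, the algebraic verification is structurally identical to the one in Appendix~2, and no new conceptual ingredient is needed.
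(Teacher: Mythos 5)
Your proposal is correct and follows essentially the same route as the paper's Appendix~2 proof: both express every valid (facet) inequality $z_1\leq \bm{a}^T\vu+b$ as a nonnegative combination of the LP constraints, using the bound $a_i\geq s_i(1-\Delta/M)$ obtained from the point $M\bm{e}_i$ and multipliers that are exactly the footnote's $\alpha_i$ rescaled by $s_i$ (the paper simply proves the group case directly and recovers Theorem~\ref{th:sharp1} as the $s_i=1$ specialization). The only bookkeeping item you flag but do not resolve is that the normalizing factor must change from $\tfrac{M}{n\Delta}$ to $\tfrac{M}{N\Delta}$ with $N=\sum_i s_i$ so that the multipliers sum to one and reproduce the constant $(N-1)\Delta$.
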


Finally, we describe a set of valid inequalities for the MILP model (\ref{eq:MILPkgroup-new}) for $k\geq 2$. 

\begin{theorem} \label{th:valid-group}
	The following inequalities are valid for the group problem $P'_k$ for $k\geq 2$:
	\begin{align}
	& z_k \leq \sum_{i\in I_k} s_iu_i \label{eq:valid0-group} \\
	& z_k \leq \Big(\sum_{j\in I_k} s_i\Big) u_j 
	+ \beta \hspace{-1.5ex} \sum_{j\in I_k\setminus\{i\}} \hspace{-1.5ex} s_j(u_j - \bar{u}_{i_{k-1}}), \;\; i\in I_k \label{eq:valid-group}
	\end{align}
	where
	\[
	\beta = \frac{M-\Delta}{M-(\bar{u}_{i_{k-1}}-\bar{u}_{i_1})} = \Big(1-\frac{\Delta}{M}\Big)\Big(1-\frac{\bar{u}_{i_{k-1}}-\bar{u}_{i_1}}{M}\Big)^{-1}
	\]
\end{theorem}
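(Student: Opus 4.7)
The plan is to mirror the proof of Theorem~\ref{th:valid} in the group setting, carrying the weights $s_j$ through the argument. By Theorem~\ref{th:MILPGroup}, it suffices to show that (\ref{eq:valid0-group}) and (\ref{eq:valid-group}) are implied by the hypograph description (\ref{eq:seq2-group}) of $P'_k$ for any $\vu$ with $u_{i_j}=\bar{u}_{i_j}$ for $j=1,\ldots,k-1$.

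For (\ref{eq:valid0-group}), I rewrite the right-hand side of (\ref{eq:seq2-group}$a$) as $\sum_{i\in I_k} s_i\bigl(\min\{\bar{u}_{i_1}+\Delta,\, u_{\langle k\rangle}\} + (u_i-\bar{u}_{i_1}-\Delta)^+\bigr)$ and argue termwise that each summand is at most $s_i u_i$. If $u_i\leq\bar{u}_{i_1}+\Delta$, then $u_{\langle k\rangle}\leq u_i\leq\bar{u}_{i_1}+\Delta$, so the positive part vanishes and the minimum equals $u_{\langle k\rangle}\leq u_i$. If $u_i>\bar{u}_{i_1}+\Delta$, the summand simplifies to $s_i\bigl(\min\{0,\,u_{\langle k\rangle}-\bar{u}_{i_1}-\Delta\}+u_i\bigr)\leq s_iu_i$. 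Summing yields (\ref{eq:valid0-group}).

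For (\ref{eq:valid-group}), I fix $i\in I_k$ and repeat the two-case analysis of Theorem~\ref{th:valid}. The crucial algebraic inequality
\[
\beta(u_j-\bar{u}_{i_{k-1}}) \;\geq\; u_j-\bar{u}_{i_1}-\Delta, \qquad j\in I_k,
\]
depends only on the definition of $\beta$ together with the upper bound $u_j\leq M+\bar{u}_{i_1}$ from (\ref{eq:seq2-group}$c$), so it transfers verbatim. In Case~1 ($u_i-\bar{u}_{i_1}\leq\Delta$), the right-hand side of (\ref{eq:seq2-group}$a$) collapses to $\bigl(\sum_{j\in I_k}s_j\bigr)u_{\langle k\rangle}+\sum_{j\in I_k\setminus\{i\},\,u_j>\bar{u}_{i_1}+\Delta} s_j(u_j-\bar{u}_{i_1}-\Delta)$; using $u_{\langle k\rangle}\leq u_i$ on the first term and the above algebraic inequality (together with $u_j\geq\bar{u}_{i_{k-1}}$, which makes the extra terms $\beta s_j(u_j-\bar{u}_{i_{k-1}})\geq 0$ for indices in the fair region) on each remaining term gives (\ref{eq:valid-group}). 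In Case~2 ($u_i-\bar{u}_{i_1}>\Delta$), I add and subtract $\bigl(\sum_{j\in I_k}s_j\bigr)u_i$, exactly as in the step leading to (\ref{eq:proof45}), which produces two nonpositive error terms—one because $u_i>\bar{u}_{i_1}+\Delta\geq\min\{\bar{u}_{i_1}+\Delta,u_{\langle k\rangle}\}$ and one because the minimum is bounded above by $\bar{u}_{i_1}+\Delta$—and then apply the same dominance $s_j(u_j-\bar{u}_{i_1}-\Delta)^+\leq\beta s_j(u_j-\bar{u}_{i_{k-1}})$ to conclude.

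The main obstacle is purely bookkeeping: the coefficient $n-k+1$ that led the argument in Theorem~\ref{th:valid} is replaced by $\sum_{j\in I_k}s_j$, and each individual-term inequality must be multiplied by the appropriate $s_j$ before summation. Since both (\ref{eq:valid0-group}) and (\ref{eq:valid-group}) are linear in the weights $s_j$, no new analytic content is needed, and the proof of Theorem~\ref{th:valid} lifts by linearity once the correct group-weighted summations are in place.
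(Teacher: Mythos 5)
Your proposal is correct and follows essentially the same route as the paper's proof in Appendix~2: reduce via Theorem~\ref{th:MILPGroup} to showing that (\ref{eq:seq2-group}) implies the two inequalities, prove (\ref{eq:valid0-group}) by the termwise two-case bound $s_i(\min\{\bar{u}_{i_1}+\Delta,u_{\langle k\rangle}\}+(u_i-\bar{u}_{i_1}-\Delta)^+)\leq s_iu_i$, and prove (\ref{eq:valid-group}) by the same two-case analysis as Theorem~\ref{th:valid} with the key inequality $\beta(u_j-\bar{u}_{i_{k-1}})\geq u_j-\bar{u}_{i_1}-\Delta$ carried over unchanged and every term weighted by $s_j$. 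Your observation that the argument lifts by linearity in the weights, with $n-k+1$ replaced by $\sum_{j\in I_k}s_j$, is exactly how the paper executes it.
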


\section{Step-by-Step Practical Guide} \label{sec:guide}

We now provide a practical guide for incorporating equity and efficiency within an existing optimization model.  We suppose that the model's constraint set is $\mathcal{C}$.  We will define a series of optimization models $\mathcal{M}_1,\ldots, \mathcal{M}_K$ that must be solved to obtain a solution that balances equity and efficiency.  Each $\mathcal{M}_k$ will contain the constraints in $\mathcal{C}$ as well as some additional constraints and an objective function as described below.  If $\mathcal{C}$ is a linear or mixed-integer constraint set, then each $\mathcal{M}_k$ will be an MILP model.  Nonlinear constraints in $\mathcal{C}$ will, of course, result in nonlinear models $\mathcal{M}_k$.

\begin{description}
 \item {\em Step 1.}  Decide what form of utility is to be distributed to the relevant parties.  Utility can be profit, negative cost, or any other benefit that is appropriate to the situation.  In the healthcare resource allocation problem of the next section, utility is the number of quality-adjusted life years experienced by a patient. In the emergency shelter location problem that follows it, utility is negative cost, where cost is measured as the distance one must travel to reach an assigned shelter.  
 
 \item {\em Step 2.}  Identify the parties to whom utility is to be distributed, and for whom fairness is a concern.  The parties could be individual persons or groups of people.  Examples of groups might be organizations, demographic groups, or geographic regions.  Both examples of the next section distribute utility to groups.  In the healthcare case, treatments are allocated to classes of patients who have a similar health condition, and in the shelter allocation case, shelter access is distributed to neighborhoods.  It is assumed that each individual in a given group receives roughly the same utility allotment, so that fairness to the group is interpreted as fairness to the individuals in the group.  When there are a large number of individuals concerned, they can normally be grouped into a few categories of individuals who are similar in relevant respects.  For example, if age-related fairness is desired, the population can be grouped into age categories.  This not only makes the solution of the problem intelligible and relevant to the fairness question, but it eases the computational burden.  The number of optimization models $\mathcal{M}_k$ to be solved is bounded above by the number of parties.
 
 \item {\em Step 3.} Let variable $u_i$ represent the utility allocated to party $i$, or when the party is a group, to each individual in the group $i$.  There are $n$ parties, and $s_i$ is the number of people in group $i$.  Add to $\mathcal{C}$ constraints that define $u_i$ in terms of existing variables, and let $\mathcal{C}'$ be the resulting constraint set.  In some applications, one may wish to model utility as a concave function of the resource that is distributed, to reflect decreasing returns to scale.  For example, the marginal utility of income may decrease with the recipient's wealth.  In such cases, constraints can be introduced to define $u_i$ in terms of resource variables in a nonlinear or piecewise linear fashion.  
 
 \item {\em Step 4.} Let $\mathcal{M}_1$ be (\ref{eq:MILPmodel1}) augmented by the constraints in $\mathcal{C}'$, or if the parties are groups, let $\mathcal{M}_1$ be (\ref{eq:MILP1group-new}) augmented by the constraints in $\mathcal{C}'$.  For $k\geq 2$, let $\mathcal{M}_k$ be (\ref{eq:MILPmodel}) augmented by $\mathcal{C}'$, or in the group model, let $\mathcal{M}_k$ be (\ref{eq:MILPkgroup-new}) augmented by $\mathcal{C}'$.  If desired, each model $\mathcal{M}_k$ for $k\geq 2$ can be tightened by adding the valid constraints (\ref{eq:valid0})--(\ref{eq:valid}) when the parties are individuals, or (\ref{eq:valid0-group})--(\ref{eq:valid-group}) when they are groups.  This may accelerate solution.  Since each $\mathcal{M}_k$ may have multiple optimal solutions, we suggest adding $\epsilon \sum_i u_i$ (or $\epsilon \sum_i s_iu_i$ when the parties are groups) to the objective function as a tie-breaking term, where $\epsilon>0$ is a very small number.
 
 \item {\em Step 5.}  Select $\Delta$ to reflect the desired trade-off between equity and efficiency, where $\Delta=0$ leads to a purely utilitarian solution, and sufficiently large $\Delta$ leads to a purely leximax solution.   Intermediate values of $\Delta$ indicate that parties whose utility is within $\Delta$ of the smallest utility are given a certain amount of priority.  A more detailed interpretation of $\Delta$ is given in Section~\ref{sec:defSWF}.  
 
 \item {\em Step 6.} The optimal solution of $\mathcal{M}_k$ determines the value of the $k$th smallest utility in the socially optimal distribution $\bar{\vu}$.  First, solve $\mathcal{M}_1$, in which $M$ is an upper bound on $u_i-u_j$ for all $i,j$.  Let $\vu^{[1]}$ be the value of $\vu$ in an optimal solution. Let $u_{i_1}$ be a variable for which $u_{i_1}^{[1]}=\min_i\{u_i^{[1]} \;|\; i=1,\ldots,n\}$, and let $\bar{u}_{i_1}=u_{i_1}^{[1]}$.  Now do the following for $k=2,3,\ldots,n$ until $\bar{u}_{i_k} > \bar{u}_{i_1} + \Delta$.  Solve $\mathcal{M}_k$, in which $I_k=\{1,\ldots,n\}\setminus\{i_1,\ldots,i_{k-1}\}$, and $M$ is an upper bound on $u_i-\bar{u}_{i_1}$ for all $i\in I_k$. Let $\vu^{[k]}$ be an optimal solution.  Let $u_{i_k}$ be a variable for which $u_{i_k}^{[k]}=\min_i\{u_i^{[k]} \;|\; i\in I_k\}$, and let $\bar{u}_{i_k}=u_{i_k}^{[k]}$.  If $\bar{u}_{i_k}>\bar{u}_{i_1}+\Delta$ and $k<n$, let $u_i=u_i^{[k]}$ for $i\in I_k\setminus\{k\}$.
 
 \item {\em Step 7.}  The resulting utility vector $\bar{\vu}$ is an optimal solution of the social welfare problem, using parameter $\Delta$.  At this point, decision makers may wish to re-solve the problem for different values of $\Delta$ to obtain a solution with the desired balance of equity and utility.  In particular, they can select a parameter value for which utilities $\bar{u}_i$ within $\Delta$ of $\bar{u}_{i_1}$ represent parties that should receive priority. 

\end{description}

\section{Applications} \label{sec:exp}
We now implement our approach on a healthcare resource allocation problem and a disaster management problem.  We solve all MILP instances using Gurobi 8.1.1 on a desktop PC running Windows 10.

\subsection{Healthcare Resource Allocation} \label{sec:healthexp}

A proper balance between fairness and efficiency is crucial in the allocation of healthcare resources.
\citeauthor{HooWil12} (\citeyear{HooWil12}) study a problem in which treatments are made available to patients on the basis of their disease and prognosis.  In discussing this case, we caution that the results we report should not be taken as general recommendations for the allocation of medical resources.  They are based on cost and clinical data that are outdated and specific to a particular set of circumstances.  We nonetheless use this example because it allows comparison with the published Hooker-Williams (H-W) results on the same problem instance.

Patients are divided into groups based on their disease and prognosis.  There is one treatment potentially available to each patient group, and for policy consistency, it is provided to either all or none of the group members.  Binary variable $y_i$ is 1 if group $i$ receives the recommended treatment and 0 otherwise.  
The average utility $u_i$ experienced by members of group $i$ is measured in terms of quality adjusted life years (QALYs); $q_i$ is the net gain in QALYs for a member of group $i$ when receiving the recommended treatment, and $\alpha_i$ is the expected QALYs experienced with medical management without the treatment. Thus
\begin{equation}
u_i = \alpha_i + q_i y_i, \;\; i=1,\ldots, n
\label{eq:health1}
\end{equation}
The budget constraint is 
\begin{equation}
\sum_{i}^n s_i c_i y_i \leq B
\label{eq:health2}
\end{equation}
where $s_i$ is the group size, $c_i$ the cost of treating one patient in group $i$, and $B$ the total available budget.  The specific data can be found in Table~1 of the H-W paper. The budget is set so as to force some hard decisions.  Referring to the notation of the previous section, the original problem constraint set $\mathcal{C}$ consists of (\ref{eq:health2}) and $y_i\in \{0,1\}$ for $i=1, \ldots, n$, and $\mathcal{C}'$ is obtained by adding constraints (\ref{eq:health1}).

The H-W results are reproduced here in Table~\ref{table:result-HW}, in which the columns indicate solutions values of $y_i$ for the 33 patient groups and various ranges of $\Delta$.   The treatments are pacemaker implant, hip replacement, aortic valve replacement, coronary artery bypass grafting (CABG), heart and kidney transplant, and kidney dialysis.  Three types pf CABG surgery are distinguished (left main, double, and triple bypass), and kidney dialysis patients are distinguished by years of life expectancy with dialysis.  Most of these categories are further divided into one, two, or three patient groups representing the degree of severity of the disease.  The last column indicates the average number of QALYs per patient for each $\Delta$ range.

		\begin{table}
		\centering
		\caption{Results of healthcare example from Table~2 of \citeauthor{HooWil12} (\citeyear{HooWil12}).  A $1$ indicates that the treatment is given to all members of a patient group, and a $0$ indicates that the treatment is given to none.} \label{table:result-HW}
		\vs
		{\footnotesize
		\begin{tabular}{ccccccccccccccc}
		$\Delta$ & Pace- & Hip & Aortic & \multicolumn{3}{c}{CABG} & Heart & Kidney & \multicolumn{5}{c}{Kidney dialysis} & Avg. \\
		range & maker & repl. & valve & L & 3 & 2 & trans. & trans. & $<$1 & 1-2 & 2-5 & 5-10 & $>$10 & QALYs. \\  \hline 
		0-3.3  & 111 & 111 & 111 & 111 & 111 & 111 & 1 & 11 & 0 & 0 & 000 & 0000 & 000 & 7.54 \\
		3.4-4.0 & 111 & 111 & 111 & 111 & 111 & 111 & 0 & 11 & 1 & 0 & 000 & 0000 & 000 & 7.54 \\
		4.0-4.4 & 111 & 111 & 111 & 111 & 111 & 111 & 0 & 01 & 1 & 0 & 000 & 0000 & 001 & 7.51 \\
		4.5-5.01 & 111 & 011 & 111 & 111 & 111 & 111 & 1 & 01 & 1 & 0 & 000 & 0000 & 011 & 7.43 \\
		5.02-5.55 & 111 & 011 & 011 & 111 & 111 & 111 & 1 & 01 & 1 & 0 & 000 & 0001 & 011 & 7.36 \\
		5.56-5.58 & 111 & 011 & 011 & 111 & 111 & 011 & 0 & 01 & 1 & 0 & 000 & 0001 & 111 & 7.36 \\
		5.59 & 111 & 011 & 011 & 110 & 111 & 111 & 0 & 01 & 1 & 0 & 000 & 0001 & 111 & 7.20 \\
		5.60-13.1 & 111 & 111 & 111 & 101 & 000 & 000 & 1 & 11 & 1 & 0 & 111 & 1111 & 111 & 7.06 \\
		13.2-14.2 & 111 & 011 & 111 & 011 & 000 & 000 & 1 & 11 & 1 & 1 & 111 & 1111 & 111 & 7.03 \\
		14.3-15.4 & 111 & 111 & 111 & 011 & 000 & 000 & 1 & 11 & 1 & 1 & 101 & 1111 & 111 & 7.13 \\
		15.5 up & 111 & 011 & 111 & 011 & 001 & 000 & 1 & 11 & 1 & 0 & 011 & 1111 & 111 & 7.19 \\
		 \hline
		\end{tabular}
		}
	\end{table}

The results contain several interesting features, but most obvious is the transfer of resources from heart bypass surgery to dialysis as $\Delta$ increases.  Kidney dialysis is quite costly, because the treatment is ongoing rather than a one-time event such as surgery.  The payoff in QALYs per unit cost is therefore relatively low, and bypass surgery is selected when the utilitarian objective dominates (smaller values of $\Delta$).  As $\Delta$ increases, resources are transferred to dialysis patients, who are the worst off without treatment; heart bypass patients tend to have a fairly long life expectancy without the surgery.  

Most relevant here is the fact that the average QALYs per patient decrease relatively little as $\Delta$ increases, as can be seen in the last column of Table~\ref{table:result-h}.  This is due to the fact that the H-W method does not take into account the utility levels of patients in the fair region (i.e., within $\Delta$ of the lowest), except for the very lowest.  This results in a large space of alternate optimal solutions, many of them quite different from each other.  To deal with this indeterminacy, the H-W experiments break ties by adding $\epsilon\sum_i s_iu_i$ to the objective function.  This means that utilities in the fair region (except the lowest) are treated in a  utilitarian fashion.  Thus as $\Delta$ increases, the solution becomes basically utilitarian again, except that the welfare of the very worst-off patient is maximized.    

Our results appear in Tables~\ref{table:result-h} and~\ref{table:result-h-bt}, one with and one without tie-breaking.  We discuss the merits of tie-breaking below, but in our experiments it has a rather small effect on the solution and does not change the analysis to follow.  The computation time for a given $\Delta$ is negligible, almost always less than 0.5 second, even though there are 33 groups.   The average utility per patient is plotted in Fig.~\ref{fig:health}.  The fluctuations in utility as $\Delta$ increases indicate the subtlety and complexity of balancing leximax fairness and utilitarianism.  The discontinuities are due to the discreteness of the problem.  When a group moves into or out of the fair region, the impact on average utility can be significant even when the effect on social welfare is small.  

	\begin{table}[!b]
		\centering
		\caption{Results of healthcare example, without tie-breaking.   
		}
		\label{table:result-h}
		\vs
		{\footnotesize
		\begin{tabular}{ccccccccccccccc}
		$\Delta$ & Pace- & Hip & Aortic & \multicolumn{3}{c}{CABG} & Heart & Kidney & \multicolumn{5}{c}{Kidney dialysis} & Avg. \\
		range & maker & repl. & valve & L & 3 & 2 & trans. & trans. & $<$1 & 1-2 & 2-5 & 5-10 & $>$10 & QALYs. \\ \hline 
		0-0.2  & 111 & 111 & 111 & 111 & 111 & 111 & 1 & 11 & 0 & 0 & 000 & 0000 & 000 & 7.544 \\
		0.3  & 111 & 111 & 111 & 111 & 111 & 011 & 1 & 11 & 0 & 0 & 100 & 0000 & 001 & 7.542 \\
		0.4 & 111 & 111 & 111 & 111 & 011 & 011 & 1 & 11 & 0 & 1 & 100 & 1000 & 000 & 7.516 \\
		0.5 & 111 & 111 & 111 & 111 & 011 & 011 & 0 & 11 & 0 & 1 & 110 & 1000 & 000 & 7.514 \\
		0.6 & 111 & 111 & 111 & 111 & 011 & 001 & 1 & 01 & 0 & 1 & 111 & 1100 & 000 & 7.453 \\
		0.7 & 111 & 111 & 111 & 111 & 011 & 001 & 0 & 00 & 0 & 1 & 111 & 1100 & 100 & 7.405 \\
		0.8 & 111 & 111 & 111 & 011 & 011 & 001 & 1 & 00 & 0 & 1 & 111 & 1100 & 110 & 7.354 \\
		0.9 & 111 & 111 & 111 & 011 & 001 & 001 & 1 & 00 & 0 & 1 & 111 & 1101 & 110 & 7.283 \\
		1-1.9 & 111 & 111 & 011 & 011 & 001 & 001 & 1 & 00 & 0 & 1 & 111 & 1101 & 111 & 7.226 \\
		2-3.3 & 111 & 111 & 111 & 011 & 001 & 000 & 1 & 00 & 0 & 1 & 111 & 1111 & 111 & 7.206 \\
		3.4-4.55 & 011 & 111 & 111 & 011 & 001 & 000 & 1 & 00 & 1 & 1 & 111 & 1111 & 111 & 7.091 \\
		4.56-5.06 & 001 & 111 & 111 & 011 & 001 & 001 & 1 & 00 & 1 & 1 & 111 & 1111 & 111 & 6.940 \\
	    5.07-5.34 & 011 & 111 & 111 & 011 & 001 & 000 & 1 & 00 & 1 & 1 & 111 & 1111 & 111 & 7.091 \\
	    5.35-6.59 & 111 & 111 & 111 & 001 & 001 & 000 & 1 & 01 & 1 & 1 & 111 & 1111 & 111 & 7.123 \\
	    6.60-8.43 & 011 & 111 & 111 & 011 & 001 & 000 & 1 & 00 & 1 & 1 & 111 & 1111 & 111 & 7.091 \\
	    8.44-11.5 & 001 & 111 & 111 & 011 & 001 & 001 & 1 & 00 & 1 & 1 & 111 & 1111 & 111 & 6.940 \\
	    11.6-13 & 010 & 111 & 111 & 011 & 001 & 001 & 1 & 00 & 1 & 1 & 111 & 1111 & 111 & 6.802 \\
	    13 up & 000 & 110 & 110 & 011 & 001 & 001 & 1 & 00 & 1 & 1 & 111 & 1111 & 111 & 5.942 \\
		 \hline
		\end{tabular}
		}
	\end{table}
	
	\begin{table}[!]
		\centering
		\caption{Results of the healthcare example, with tie-breaking:
		}\label{table:result-h-bt}
		\vs
		{\footnotesize
		\begin{tabular}{ccccccccccccccc}
		$\Delta$ & Pace- & Hip & Aortic & \multicolumn{3}{c}{CABG} & Heart & Kidney & \multicolumn{5}{c}{Kidney dialysis} & Avg. \\
		range & maker & repl. & valve & L & 3 & 2 & trans. & trans. & $<$1 & 1-2 & 2-5 & 5-10 & $>$10 & QALYs. \\ \hline 
		0-0.2  & 111 & 111 & 111 & 111 & 111 & 111 & 1 & 11 & 0 & 0 & 000 & 0000 & 000 & 7.544 \\
		0.3  & 111 & 111 & 111 & 111 & 111 & 011 & 1 & 11 & 0 & 0 & 100 & 0000 & 001 & 7.542 \\
		0.4 & 111 & 111 & 111 & 111 & 011 & 011 & 1 & 11 & 0 & 1 & 100 & 1000 & 000 & 7.516 \\
		0.5 & 111 & 111 & 111 & 111 & 011 & 011 & 0 & 11 & 0 & 1 & 110 & 1000 & 000 & 7.514 \\
		0.6 & 111 & 111 & 111 & 111 & 011 & 001 & 1 & 01 & 0 & 1 & 111 & 1100 & 000 & 7.453 \\
		0.7 & 111 & 111 & 111 & 111 & 011 & 001 & 0 & 00 & 0 & 1 & 111 & 1100 & 100 & 7.405 \\
		0.8 & 111 & 111 & 111 & 011 & 011 & 001 & 1 & 00 & 0 & 1 & 111 & 1100 & 110 & 7.354 \\
		0.9 & 111 & 111 & 111 & 011 & 001 & 001 & 1 & 00 & 0 & 1 & 111 & 1101 & 110 & 7.283 \\
		1-1.9 & 111 & 111 & 011 & 011 & 001 & 001 & 1 & 00 & 0 & 1 & 111 & 1101 & 111 & 7.226 \\
		2 & 111 & 111 & 111 & 011 & 011 & 000 & 1 & 00 & 0 & 1 & 111 & 1111 & 011 & 7.274 \\
		2.1-3.2 & 111 & 111 & 111 & 011 & 001 & 000 & 1 & 00 & 0 & 1 & 111 & 1111 & 111 & 7.206 \\
		3.3 & 111 & 111 & 111 & 011 & 001 & 000 & 1 & 00 & 1 & 1 & 111 & 1101 & 111 & 7.197 \\
		3.4-4.55 & 011 & 111 & 111 & 011 & 001 & 000 & 1 & 00 & 1 & 1 & 111 & 1111 & 111 & 7.091 \\
		4.56-5.06 & 001 & 111 & 111 & 011 & 001 & 001 & 1 & 00 & 1 & 1 & 111 & 1111 & 111 & 6.940 \\
	    5.07-5.34 & 011 & 111 & 111 & 011 & 001 & 000 & 1 & 00 & 1 & 1 & 111 & 1111 & 111 & 7.091 \\
	    5.35-5.59 & 111 & 111 & 111 & 001 & 001 & 000 & 1 & 01 & 1 & 1 & 111 & 1111 & 111 & 7.123 \\
	    5.60-6.50 & 111 & 111 & 111 & 011 & 000 & 000 & 1 & 01 & 1 & 1 & 111 & 1111 & 111 & 7.099 \\
	    6.51-6.59 & 111 & 111 & 111 & 001 & 001 & 000 & 1 & 01 & 1 & 1 & 111 & 1111 & 111 & 7.123 \\
	    6.60-7.59 & 011 & 111 & 111 & 011 & 001 & 000 & 1 & 00 & 1 & 1 & 111 & 1111 & 111 & 7.091 \\
	    7.60-9.6 & 101 & 111 & 111 & 011 & 001 & 000 & 1 & 01 & 1 & 1 & 111 & 1111 & 111 & 7.012 \\
	    9.7-12.2 & 000 & 111 & 111 & 011 & 001 & 001 & 1 & 01 & 1 & 1 & 111 & 1111 & 111 & 6.607 \\
	    12.3-12.7 & 010 & 111 & 111 & 011 & 001 & 001 & 1 & 00 & 1 & 1 & 111 & 1111 & 111 & 6.802 \\
	    12.8-13 & 000 & 111 & 110 & 011 & 001 & 001 & 1 & 01 & 1 & 1 & 111 & 1111 & 111 & 6.309 \\
	    13.1 & 000 & 111 & 111 & 011 & 001 & 001 & 1 & 00 & 1 & 1 & 111 & 1111 & 111 & 6.058 \\
	    13.2-14.5 & 000 & 110 & 110 & 011 & 001 & 001 & 1 & 00 & 1 & 1 & 111 & 1111 & 111 & 5.942 \\
	    14.6-15.5 & 000 & 111 & 110 & 111 & 001 & 001 & 1 & 00 & 1 & 1 & 111 & 1111 & 111 & 6.325 \\
	    15.6 up & 000 & 111 & 110 & 011 & 001 & 001 & 1 & 01 & 1 & 1 & 111 & 1111 & 111 & 6.309 \\
		 \hline
		\end{tabular}
		}
	\end{table}
	\begin{figure}
	    \centering
	    \includegraphics[scale=0.3,trim=130 25 140 60, clip]{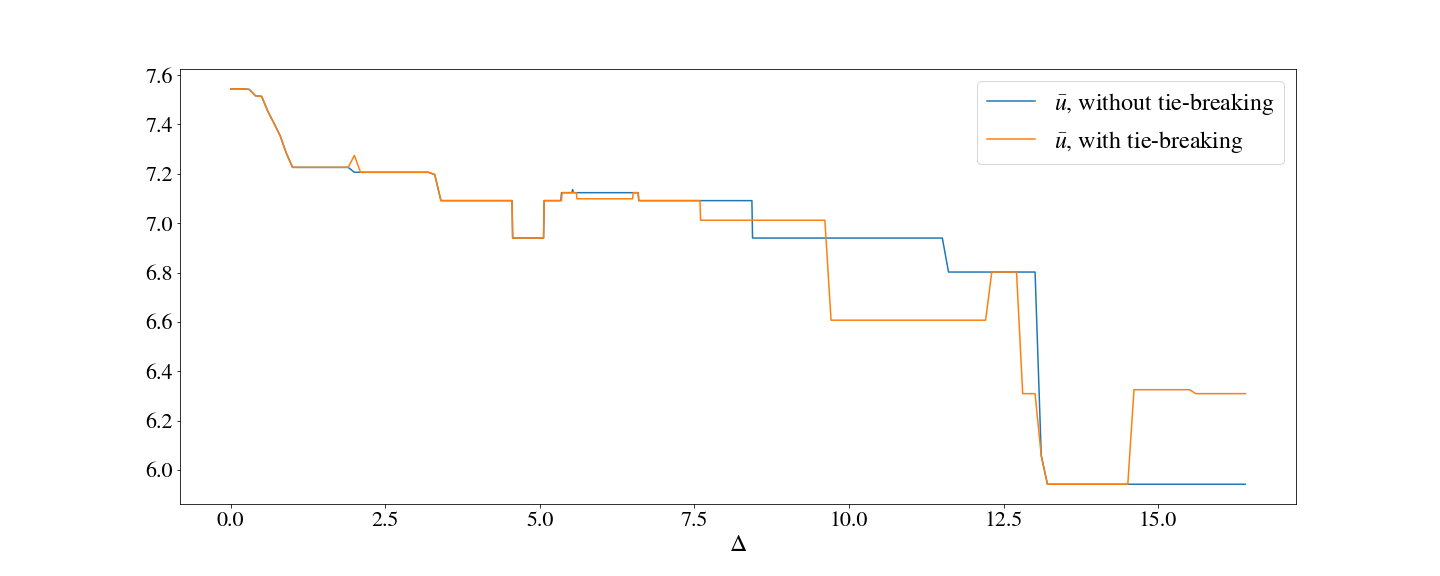}
	    \caption{Average QALYs of optimal outcomes in healthcare example}
	    \label{fig:health}
	\end{figure}

We note first that the average utility per patient drops considerably as $\Delta$ increases, indicating that equity plays a larger role for $\Delta>0$ than in the H-W solution.  Kidney dialysis enters the solution for much smaller values of $\Delta$, and the more seriously ill kidney patients enter first, the reverse of what occurs in the H-W solution.  This reflects the fact that our solution is sensitive to the utility levels of all disadvantaged patient groups rather than only the very worst-off.  The trade-off parameter $\Delta$ also conveys more information than in the H-W case.  Resources begin to transfer to dialysis patients when $\Delta$ is only a fraction of a QALY.  A value of $\Delta$ this small dictates that patients receive priority only when their life expectancy is within a few months of the worst-off.  One can therefore conclude that most dialysis patients are nearly as badly off as the worst-off patient in a socially optimal solution.  The H-W solution does not provide this information, because it brings in most dialysis patients only when $\Delta$ is greater than 5 QALYs or so.

There are other differences with the H-W solution.  Heart bypass surgery remains in the solution for the most seriously ill patients, with some exceptions, through the entire range of $\Delta$.  This is again because the solution is sensitive to their disadvantaged position even though they are not the worst-off.  Pacemakers now drop out of the solution for large $\Delta$, even though pacemaker implantation is relatively inexpensive.  This is because the pacemaker patients are better off without treatment than any of the other patients and therefore cease to receive priority as equity becomes more important.  In general, these results indicate that incorporating leximax rather than maximin fairness in a social welfare function yields more nuanced solutions that are more sensitive to equity considerations.

Finally, we note that due to the nonmonotonicity of average utility with respect to $\Delta$, solutions can sometimes dominate others with respect to both efficiency and equity.  For example, solutions with $\Delta\geq 14.5$ dominate solutions with $13\leq \Delta \leq 14.5$ (see Fig.~\ref{fig:health}), and the latter might be rejected on that basis.


We experimented with tie-breaking because alternate optimal solutions typically exist at each stage of the optimization procedure, if not to the same extent as when maximin fairness is used.  Tables~\ref{table:result-h} and~\ref{table:result-h-bt} indicate that tie breaking has only a minor effect on the solution but results in greater sensitivity to $\Delta$, and it may therefore allow greater control over the equity/efficiency tradeoff.  There is little difference in the average utility for $\Delta < 7.5$, as is evident in Fig.~\ref{fig:health}, while differences can be greater for larger $\Delta$.  This is perhaps because larger values of $\Delta$ lead to more iterations in the solution procedure (i.e., more sequential optimization problems are solved), and there is greater opportunity to introduce alternate optimal solutions with different utility values.  Tie-breaking may be desirable for policy-making purposes due to greater consistency in the outcome, since otherwise the solution obtained is a random choice among the optima.  Tie-breaking can also allow one to obtain greater utility at the same level of $\Delta$---although not always (as when $10\leq \Delta\leq 12$), since breaking ties early in the sequence can fix utilities that preclude solutions with greater utility later in the sequence.  In any case, alternate optimal solutions are a regular feature of combinatorial optimization problems, and we should expect nothing different here.

\subsection{Shelter Location and Assignment} \label{sec:shelterexp}

Disaster preparation and post-disaster response are important elements of humanitarian operations, in which equity is an essential consideration. We apply our approach to the shelter location and assignment problem investigated in \citeauthor{Sibel2019inequity}\ (\citeyear{Sibel2019inequity}). There are two sets of decisions: where to construct shelters in preparation for natural disasters, and how to assign one shelter to each potentially affected residential area. When a disaster strikes, those affected will evacuate to their assigned shelter for protection and supplies.  \citeauthor{Sibel2019inequity}\ solve a model with multiple scenarios representing possible demands and street disruptions, but we simplify the problem by removing the stochastic element so as to clarify the equity/efficiency trade-off. 

Utility is measured as negative cost, where cost is taken to be the travel distance between a residential area and its assigned shelter. A conventional efficiency objective is to minimize the average travel distance among all individuals. Optimizing the efficiency objective alone may result in unreasonable solutions that force people in some areas to travel a long distance to their shelter. We argue that a just and reasonable solution should try to reduce the distance for less advantaged areas without increasing average travel distance inordinately. We show that our approach is applicable to obtain such a trade-off between equity and efficiency.

To formulate a MILP model of the problem, suppose $m$ is the number of candidate locations for shelter, and $n$ is the number of population areas. For $j \in \{1,\ldots,m\}$, $c_j$ is the shelter capacity, and $e_j$ is the cost of opening a shelter at location $j$. For $i \in \{1,\ldots,n\}$, $s_i$ is the population of area $i$. We suppose that each person living in area $i$ must travel a distance of $D_{ij}$ to reach location $j$. We use two sets of binary decision variables: for location $j$, $y_j = 1$ means that a shelter is open at location $j$; for area $i$, $X_{ij} = 1$ means all persons living in the area are assigned to shelter $j$. Additionally, we impose a budget constraint requiring that the total cost of opening shelters does not exceed $B$. The model of \citeauthor{Sibel2019inequity}\ assumes that each shelter is large enough for the entire population of any individual area, so that it is unnecessary to split areas between shelters. The constraint set $\mathcal{C}$ for the problem is therefore
\begin{align*}
    &\sum_{j=1}^m X_{ij} = 1, \;\; i=1, \ldots, n  \\
    &\sum_{i=1}^n s_i X_{ij} \leq c_j y_j, \;\; j=1,\ldots,m \\ 
    &\sum_{j=1}^m e_j y_j \leq B \\
    &X_{ij}, \; y_j \in \{0,1\}, \;\; i=1,\ldots, n, \; j=1,\ldots, m 
\end{align*}
The utility $u_i$ of each person in area $i$ is defined by the following constraints, which are added to $\mathcal{C}$ to obtain the constraint set $\mathcal{C}'$: 
\[
u_i = -\sum_{j=1}^m D_{ij}X_{ij}, \;\; i=1,\ldots, n
\]

We generate problem instances with one of the methods used by \citeauthor{Sibel2019inequity} Instances of the capacitated warehouse location problem from \citeauthor{beasley1988algorithm} (\citeyear{beasley1988algorithm}) are converted to shelter location instances by identifying shelters with warehouses, residential areas with customers, and population counts $s_i$ with customer demands.  The distance $D_{ij}$ is taken to be $C_{ij}/s_i$, where $C_{ij}$ is the cost of meeting all of customer $i$'s demand from warehouse $j$.  We use instances cp92 (25 locations) and cp122 (50 locations), both having 50 customers.  The budget $B$ is set to 150000 for cp92 and 300000 for cp122.

The resulting average per capita utility is plotted against $\Delta$ in Fig.~\ref{fig:cap}, both with and without tie-breaking.  In this case, tie-breaking has very little effect, except for the pure leximax solution ($\Delta\geq 37$) of cap92.  Computation time is plotted against $\Delta$ in Fig.~\ref{fig:shelter-time}.  Run time is greater for intermediate values of $\Delta$, but never more than 18 seconds, and less than 10 seconds in most cases.

\begin{figure}[!t]
    \centering
    \begin{subfigure}{.5\textwidth}
  \centering
  \includegraphics[scale=0.3,trim=50 25 80 60, clip]{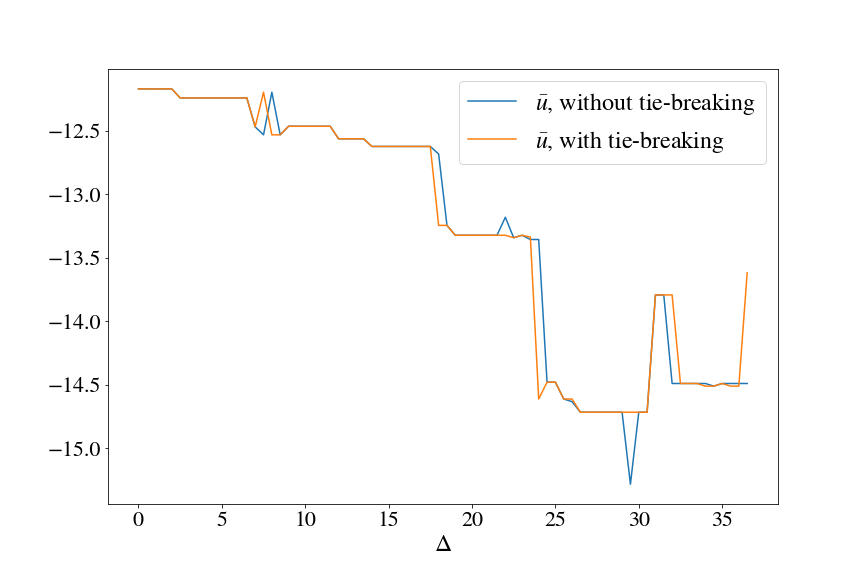}
  \caption{Instance cp92 ($n=50$, $m=25$)}
  \label{fig:cap92}
\end{subfigure}%
\begin{subfigure}{.5\textwidth}
  \centering
  \includegraphics[scale=0.3,trim=50 25 80 60, clip]{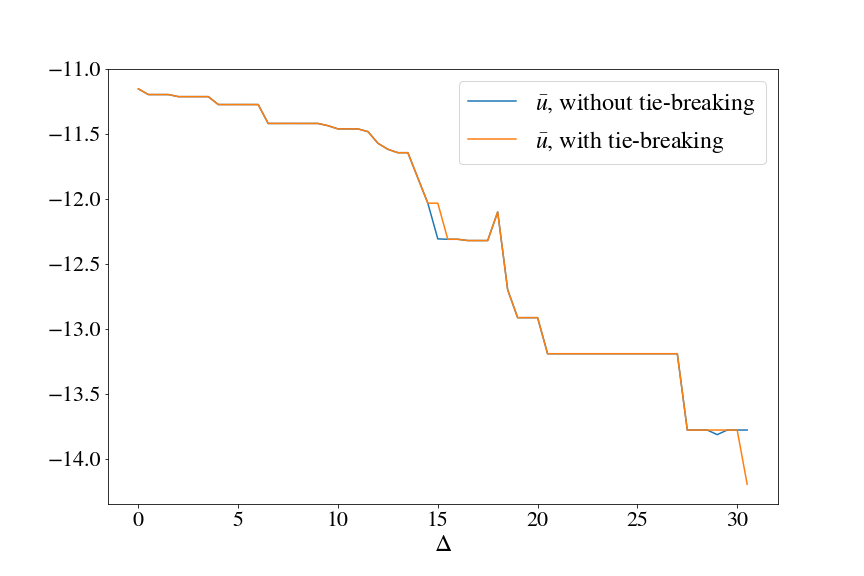}
  \caption{Instance cp122 ($n=50$, $m=50$)}
  \label{fig:cap122}
\end{subfigure}
    \caption{Average utility per person for the shelter location instances}
    \label{fig:cap}
\end{figure}

\begin{figure}[!t]
    \centering
    \begin{subfigure}{.5\textwidth}
  \centering
  \includegraphics[width=\linewidth]{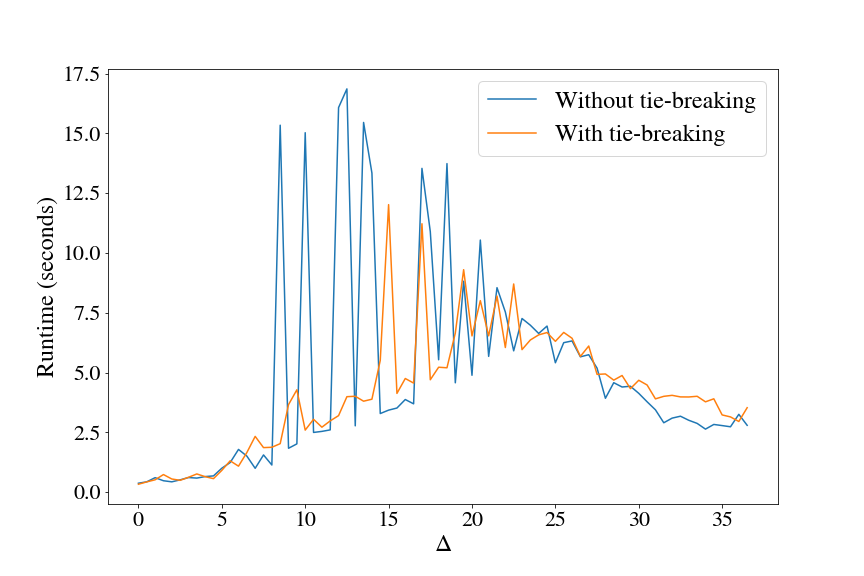}
  \caption{Instance cp92 ($n=50$, $m=25$)}
  \label{fig:cap92-t}
\end{subfigure}%
\begin{subfigure}{.5\textwidth}
  \centering
  \includegraphics[width=\linewidth]{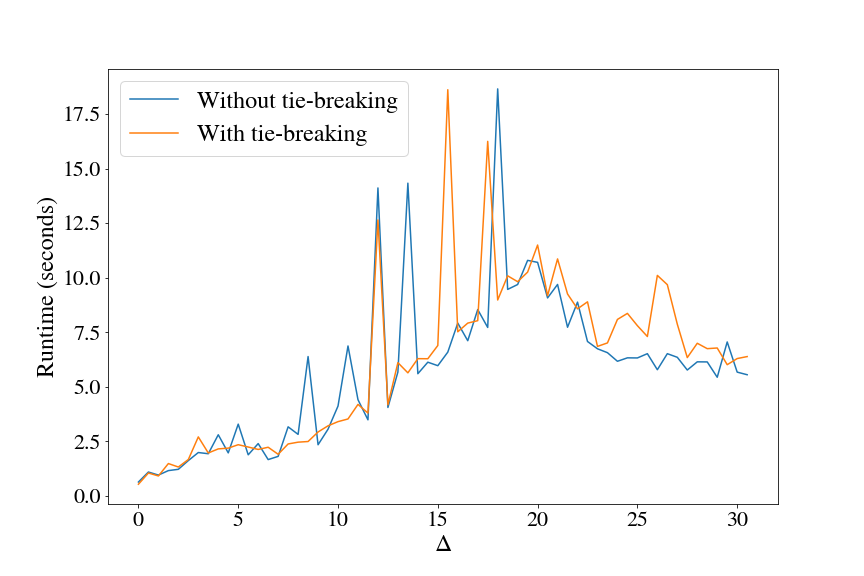}
  \caption{Instance cp122 ($n=50$, $m=50$)}
  \label{fig:cap122-t}
\end{subfigure}
    \caption{Runtime in shelter allocation example}
    \label{fig:shelter-time}
\end{figure}

Figure~\ref{fig:cap} shows that, as expected, there is a generally decreasing trend in average per capita utility as $\Delta$ increases. The trend is almost strictly monotonic in the cap122 instance, but utility jumps up when $\Delta$ is about 30 in the cap92 instance.  The instability is mostly due to the inherent complexity of trade-offs in this instance, and the fact that neighborhood populations vary greatly.  The nonmonotonicity can assist decision making, as solutions corresponding to $\Delta$ values between 23 and 30 are dominated by more equitable solutions with respect to utility and may be discarded on this basis.

\begin{figure}[!t]
    \centering
    \begin{subfigure}{.5\textwidth}
  \centering
  \includegraphics[scale=0.38,trim=50 25 80 60, clip]{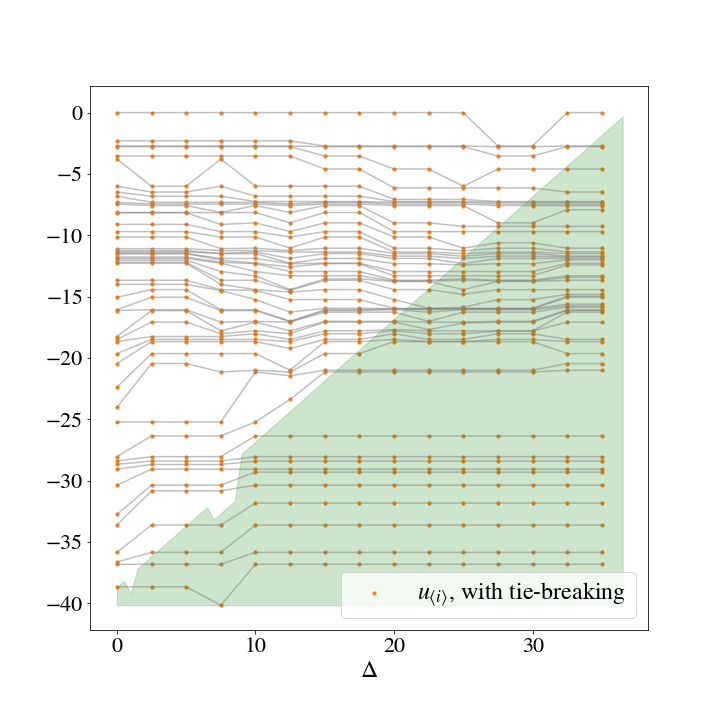}
  \caption{Instance cp92 ($n=50$, $m=25$)}
  \label{fig:cap92}
\end{subfigure}%
\begin{subfigure}{.5\textwidth}
  \centering
  \includegraphics[scale=0.38,trim=50 25 80 60, clip]{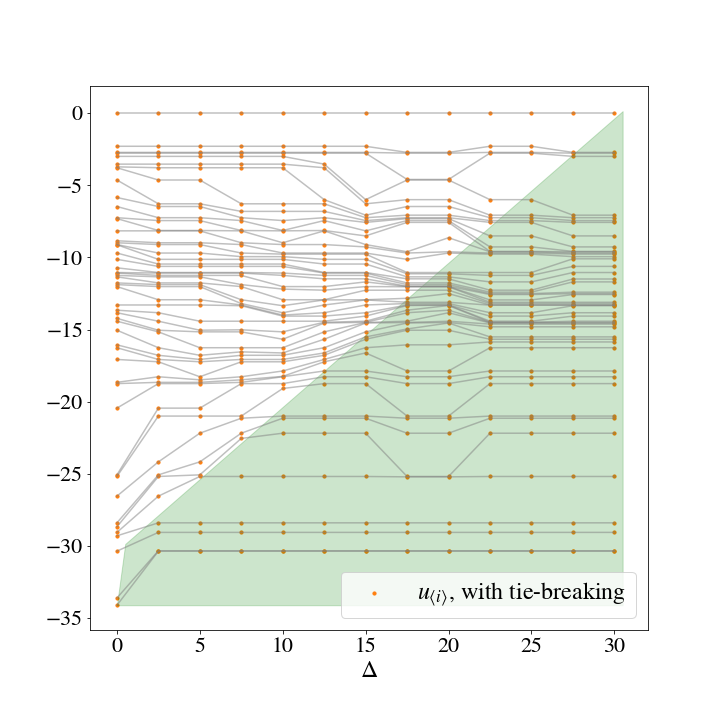}
  \caption{Instance cp122 ($n=50$, $m=50$)}
  \label{fig:cap122}
\end{subfigure}
    \caption{Utility distributions in the shelter allocation instances}
    \label{fig:shelter-u}
\end{figure}

To study the effect of increasing $\Delta$ values more closely, we plot the socially optimal utility distribution (with tie-breaking) against $\Delta$ in Fig.~\ref{fig:shelter-u}. These plots show the evolution of per capita utility in individual neighborhoods as $\Delta$ increases.  The shaded region indicates which utilities are in the fair region (within $\Delta$ of the worst).  We see immediately that the problem is highly constrained, because the lowest utilities quickly reach a plateau and remain at a low level even for large $\Delta$ values.  These neighborhoods are located at a considerable distance from candidate shelter locations, and so they remain disadvantaged even when given high priority.  In this type of situation, it is particularly important to use a leximax rather than a maximin criterion of fairness, so as to take into account the situation of disadvantaged neighborhoods other than the very worst-off.  

The leximax component yields the desired result, because several of the less advantaged neighborhoods improve their status as $\Delta$ increases.  At the same time, some of the more privileged neighborhoods lose utility as they begin to sacrifice somewhat for the sake of the more remote neighborhoods.  Interestingly, the less advantaged neighborhoods typically start to benefit from an increasing $\Delta$ shortly before they enter the fair region.   The reason for this is that when a low utility level enters the fair region, it immediately becomes suboptimal due to the greater weight it receives there.  It is therefore pushed up to one of the discrete feasible levels outside the fair region.  We also see in the plot for cp122 that some of the disadvantaged neighborhoods temporarily sacrifice utility as $\Delta$ enters the range from 15 to 22.  This is to benefit better-off neighborhoods to which priority is extended at this point.  Their utility is restored for larger values of $\Delta$ as the solution becomes still more equity-based and additional utility is transferred from the best-off neighborhoods.


\section{Conclusion} \label{sec:conclusion}
We propose a new systematic approach to balancing efficiency and equity in an optimization model, in which utility serves as the efficiency measure and Rawlsian leximax fairness as the equity measure. We are guided by the intuition that less advantaged utility recipients should be prioritized, but without an excessive reduction in overall utility.

We combine leximax and utilitarian criteria by defining a trade-off parameter $\Delta$ and dividing the feasible utility range into a \emph{fair} region and a \emph{utilitarian} region, where the fair region consists of utilities within $\Delta$ of the utility of the worst-off party.  Leximax fairness is the dominating objective in the fair region, and utility dominates otherwise. Thus a single parameter allows a decision maker to control the balance between equity and efficiency by deciding which parties are sufficiently disadvantaged---that is, sufficiently near the worst-off---to deserve some degree of priority.  

For a given optimization model, we show how to accommodate both equity and efficiency by solving a sequence of optimization problems, each of which maximizes a social welfare function (SWF) subject to constraints from the original model.  The SWFs are formulated using mixed integer constraints.  They successively give priority to the worst-off, the second worst-off, and so on, with the degree of priority gradually decreasing relative to utilities in the utilitarian region.  

As proof of concept, we apply our method to health resource allocation and disaster preparation problems.  The solution time is at most a matter of seconds for a given value of $\Delta$.  We find that the solutions are not only sensitive to equity considerations but reveal complex and subtle trade-offs.  This suggests that the modeling approach developed here can potentially serve as a useful mathematical tool for balancing fairness and efficiency in real-world situations.

	
\bibliographystyle{abbrvnat}
\bibliography{paperref}

\begin{thebibliography}{31}
\providecommand{\natexlab}[1]{#1}
\providecommand{\url}[1]{\texttt{#1}}
\expandafter\ifx\csname urlstyle\endcsname\relax
  \providecommand{\doi}[1]{doi: #1}\else
  \providecommand{\doi}{doi: \begingroup \urlstyle{rm}\Url}\fi

\bibitem[Beasley(1988)]{beasley1988algorithm}
J.~E. Beasley.
\newblock An algorithm for solving large capacitated warehouse location
  problems.
\newblock \emph{European Journal of Operational Research}, 33\penalty0
  (3):\penalty0 314--325, 1988.

\bibitem[{Bertsimas} et~al.(2012){Bertsimas}, {Farias}, and
  {Trichakis}]{Bertsimas2012}
D.~{Bertsimas}, V.~{Farias}, and N.~{Trichakis}.
\newblock On the efficiency-fairness trade-off.
\newblock \emph{Management Science}, 58:\penalty0 2234–--2250, 2012.

\bibitem[Binmore et~al.(1986)Binmore, Rubinstein, and Wolinsky]{BinRubWol86}
K.~Binmore, A.~Rubinstein, and A.~Wolinsky.
\newblock The {Nash} bargaining solution in economic modeling.
\newblock \emph{{RAND} Journal of Economics}, 17:\penalty0 176--188, 1986.

\bibitem[Charnes and Cooper(1962)]{ChaCoo62}
A.~Charnes and W.~W. Cooper.
\newblock Programming with linear fractional functionals.
\newblock \emph{Naval Research Logistics Quarterly}, 9:\penalty0 181--186,
  1962.

\bibitem[Chateauneuf and Moyes(2005)]{ChaMoy05}
A.~Chateauneuf and P.~Moyes.
\newblock Measuring inequality without the {Pigou-Dalton} condition.
\newblock WIDER Working Paper Series 2005/02, World Institute for Development
  Economic Research (UNU-WIDER), 2005.

\bibitem[Chen and Hooker(2020)]{CheHoo20}
V.~Chen and J.~N. Hooker.
\newblock A just approach balancing {Rawlsian} leximax fairness and
  utilitarianism.
\newblock In \emph{AAAI/ACM Conference on AI, Ethics, and Society (AIES)},
  pages 221--227, 2020.

\bibitem[Cowell(2000)]{Cow00}
F.~A. Cowell.
\newblock Measurement of inequality.
\newblock In A.~B. Atkinson and F.~Bourguignon, editors, \emph{Handbook of
  Income Distribution}, volume~1, pages 89--166. Elsevier, 2000.

\bibitem[Eisenhandler and Tzur(2019)]{EisTzu19}
O.~Eisenhandler and M.~Tzur.
\newblock The humanitarian pickup and distribution problem.
\newblock \emph{Operations Research}, 67:\penalty0 10--32, 2019.

\bibitem[Frankfurt(2015)]{Fra15}
H.~G. Frankfurt.
\newblock \emph{On Inequality}.
\newblock Princeton University Press, 2015.

\bibitem[Freeman(2003)]{Fre03}
S.~Freeman, editor.
\newblock \emph{The Cambridge Companion to Rawls}.
\newblock Cambridge University Press, 2003.

\bibitem[Gautier(1983)]{Gau87}
D.~Gautier.
\newblock \emph{Morals by Agreement}.
\newblock Oxford University Press, 1983.

\bibitem[Gerdessen et~al.(2018)Gerdessen, Kanellopoulos, and
  Claassen]{GerKanCla18}
J.~C. Gerdessen, A.~Kanellopoulos, and G.~Claassen.
\newblock ``{Combining} equity and utilitarianism''---{Additional} insights
  into a novel approach.
\newblock \emph{International Transactions in Operational Research},
  25:\penalty0 983--1000, 2018.

\bibitem[Harsanyi(1977)]{Har77}
J.~C. Harsanyi.
\newblock \emph{Rational Behavior and Bargaining Equilibrium in Games and
  Social Situations}.
\newblock Cambridge University Press, 1977.

\bibitem[Hooker(2013)]{Hoo13}
J.~N. Hooker.
\newblock Moral implications of rational choice theories.
\newblock In C.~{L\"{u}tge}, editor, \emph{Handbook of the Philosophical
  Foundations of Business Ethics}, pages 1459--1476. Springer, 2013.

\bibitem[Hooker and Williams(2012)]{HooWil12}
J.~N. Hooker and H.~P. Williams.
\newblock Combining equity and utilitarianism in a mathematical programming
  model.
\newblock \emph{Management Science}, 58:\penalty0 1682--1693, 2012.

\bibitem[Jenkins and {Van Kerm}(2011)]{JenVanKer11}
S.~P. Jenkins and P.~{Van Kerm}.
\newblock The measurement of economic inequality.
\newblock In B.~Nolan, W.~Salverda, and T.~M. Smeeding, editors, \emph{The
  Oxford Handbook of Economic Inequality}. Oxford University Press, 2011.

\bibitem[{Jeroslow}(1987)]{Jeroslow1987}
R.~{Jeroslow}.
\newblock Representability in mixed integer programming, {I:}
  {Characterization} results.
\newblock \emph{Discrete Applied Mathematics, 17:223-243}, 1987.

\bibitem[Kalai and Smorodinsky(1975)]{KalSmo75}
E.~Kalai and M.~Smorodinsky.
\newblock Other solutions to {Nash's} bargaining problem.
\newblock \emph{Econometrica}, 43:\penalty0 513--518, 1975.

\bibitem[{Karsu} and {Morton}(2015)]{Karsu2015}
O.~{Karsu} and A.~{Morton}.
\newblock Inequity averse optimization in operational research.
\newblock \emph{European Journal of Operational Research}, pages 343--359,
  2015.

\bibitem[Lan et~al.(2010)Lan, Kao, Chiang, and Sabharwal]{LanKaoChiSab2010}
T.~Lan, D.~Kao, M.~Chiang, and A.~Sabharwal.
\newblock An axiomatic theory of fairness in network resource allocation.
\newblock In \emph{Conference on Information Communications (INFOCOM 2010)},
  pages 1343--1351. IEEE, 2010.

\bibitem[{McElfresh} and {Dickerson}(2018)]{McElfresh2018}
C.~{McElfresh} and J.~{Dickerson}.
\newblock Balancing lexicographic fairness and a utilitarian objective with
  application to kidney exchange.
\newblock In \emph{Proceedings of AAAI Conference on Artificial Intelligence
  (AAAI 2018)}, pages 1161--1168, 2018.

\bibitem[Mostajabdaveh et~al.(2019)Mostajabdaveh, Gutjahr, and
  Sibel~Salman]{Sibel2019inequity}
M.~Mostajabdaveh, W.~J. Gutjahr, and F.~Sibel~Salman.
\newblock Inequity-averse shelter location for disaster preparedness.
\newblock \emph{IISE Transactions}, 51\penalty0 (8):\penalty0 809--829, 2019.

\bibitem[Nash(1950)]{Nas50}
J.~Nash.
\newblock The bargaining problem.
\newblock \emph{Econometrica}, 18:\penalty0 155–--162, 1950.

\bibitem[{Ogryczak} and {Sliwinski}(2006)]{Ogryczak2006}
W.~{Ogryczak} and T.~{Sliwinski}.
\newblock On direct methods for lexicographic min-max optimization.
\newblock In M.~Gavrilova, O.~Gervasi, V.~Kumar, C.~J.~K. Tan, D.~Taniar,
  A.~{Lagan\'{a}}, Y.~Mun, and H.~Choo, editors, \emph{Proceedings of
  International Conference on Computational Science and Its Applications (ICCSA
  2006)}, volume 3982 of \emph{LNCS}, pages 802--811, 2006.

\bibitem[Parfit(1997)]{Par97}
D.~Parfit.
\newblock Equality and priority.
\newblock \emph{Ratio}, pages 201--221, 1997.

\bibitem[Rawls(1999)]{Raw99}
J.~Rawls.
\newblock \emph{A Theory of Justice {\rm (revised)}}.
\newblock Harvard University Press (original edition 1971), 1999.

\bibitem[Richardson and Weithman(1999)]{RicWei99}
H.~S. Richardson and P.~J. Weithman, editors.
\newblock \emph{The Philosophy of Rawls {\rm (5 volumes)}}.
\newblock Garland, 1999.

\bibitem[Rubinstein(1982)]{Rub82}
A.~Rubinstein.
\newblock Perfect equilibrium in a bargaining model.
\newblock \emph{Econometrica}, 50:\penalty0 97--–109, 1982.

\bibitem[Scanlon(2003)]{Sca03}
T.~M. Scanlon.
\newblock The diversity of objections to inequality.
\newblock In T.~M. Scanlon, editor, \emph{The Difficulty of Tolerance: Essays
  in Political Philosophy}, pages 202--218. Cambridge University Press, 2003.

\bibitem[Thompson(1994)]{Tho94}
W.~Thompson.
\newblock Cooperative models of bargaining.
\newblock In R.~J. Aumann and S.~Hart, editors, \emph{Handbook of Game Theory},
  volume~2, pages 1237--1284. North-Holland, 1994.

\bibitem[{Williams} and {Cookson}(2000)]{WilliamsCookson2000}
A.~{Williams} and R.~{Cookson}.
\newblock {Equity in health}.
\newblock In A.~Culyer and J.~Newhouse, editors, \emph{Handbook of Health
  Economics}. Elsevier, 2000.

\end{thebibliography}

\appendix

\section*{Appendix 1} 

In this Appendix, we prove that $F_k(\vu)$ satisfies the Chateauneuf-Moyes condition for $k\geq 2$.

\medskip
{\em Proof of Theorem~\ref{th:CM2}.}
	It is clear that a sufficiently small utility-invariant transfer satisfies the C-M condition when $k > t(\vu)$, because in this case $F_k(\vu)$ is simply utilitarian. We therefore need only consider the six cases illustrated in Fig.~\ref{fig:CM2}, in which $k\leq t(\vu)$.  It is convenient to write $F_k(\vu)$ in the following form:
	\[
	F_k(\vu) = t(\vu)u_{\langle 1\rangle}
	+ \sum_{i=2}^k (n-i+1)u_{\langle i\rangle} 
	+ \hspace{-1.5ex} \sum_{i=t(\vu)+1}^n \hspace{-2ex} (u_{\langle i\rangle} - \Delta)
	\]
	The resulting gain by individuals $1,\ldots\ell$, and loss by individuals $h,\ldots,n$, are indicated in Table~\ref{ta:CM2}.  In cases (b)--(f), it is clear on inspection of Fig.~\ref{fig:CM2} that the gain is more than $\epsilon$ in each case, and the loss never more than $\epsilon$. 
	In case (a), we note first that the gain can be written
	\[
	n - \frac{\ell-1}{2} - \frac{n-t(\vu)}{\ell}
	\]
	To show that the loss is no greater than the gain, it suffices to show this when $h=\ell + 1$, since $h\geq \ell+1$ and the loss is nonincreasing with respect to $h$.  Thus it suffices to show
	\[
	n - \frac{\ell-1}{2} - \frac{n-t(\vu)}{\ell} \geq
	\frac{1}{n-\ell} 
	\Big( \sum_{i=\ell +1}^k \hspace{-0.5ex} (n-i+1) + n - t(\vu) \Big)
	\]
	Since $k\leq t(\vu)$ and each term of the summation is at most $n-\ell$, it suffices to show
	\[
	n - \frac{\ell-1}{2} - \frac{n-t(\vu)}{\ell} \geq
	\frac{\big(t(\vu)-\ell\big)(n-\ell) + n - t(\vu)}{n-\ell}
	\]
	Rearranging, we obtain
	\begin{equation}
	\big( n-t(\vu)\big) \Big(\frac{1}{\ell} + \frac{1}{n-\ell} - 1 \Big) 
	\leq \frac{\ell + 1}{2}
	\label{eq:proof41}
	\end{equation}
	This inequality is clearly satisfied when the following is false:
	\begin{equation}
	\frac{1}{\ell} + \frac{1}{n-\ell} \geq 1
	\label{eq:proof40}
	\end{equation}
	We therefore assume (\ref{eq:proof40}) is true.  Since (\ref{eq:proof41}) is clearly satisfied when $\ell = 1$,  we suppose $\ell\geq 2$, in which case (\ref{eq:proof40}) implies $n < \ell^2/(\ell-1)$.  Since $\ell<h\leq n$, we can state
	\[
	\ell + 1 \leq n < \frac{\ell^2}{\ell-1}
	\]
	or $\ell^2-1 \leq n(\ell-1) < \ell^2$.  Since $n$ and $\ell$ are positive integers, this implies $n=\ell + 1$, in which case (\ref{eq:proof41}) reduces to
	\[
	\frac{\ell+1-t(\vu)}{\ell} \leq \frac{\ell+1}{2}
	\]
	This holds because $t(\vu)\geq \ell+1$, and the theorem follows.
	$\Box$
	\medskip

\begin{figure}[!h]
	\centering
	\includegraphics[trim=100 590 140 60, clip]{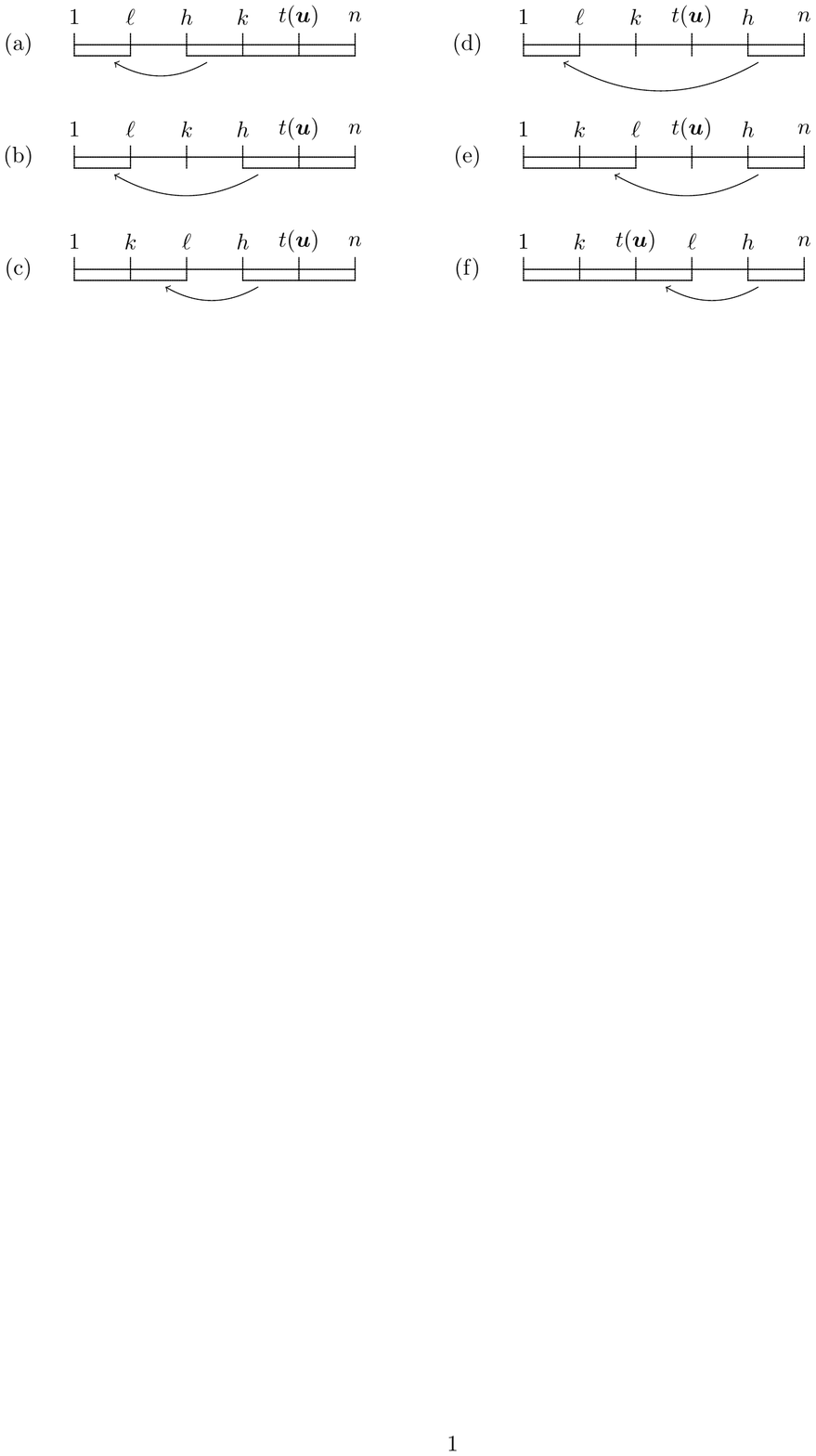}
	\caption{Illustration of proof of Theorem \ref{th:CM2}.}
	\label{fig:CM2}
\end{figure}

\begin{table}
	\centering
	\caption{Verifying the Chateauneuf-Moyes condition for $F_k(\vu)$} \label{ta:CM2}
	\vs\vs
		\begin{tabular}{cc@{\hspace{1ex}}c}
			Case & Gain & Loss \\  
			\hline \\ [-2ex]
			(a) & 
			${\ds  \frac{1}{\ell}\Big(t(\vu) + \sum_{i=2}^{\ell}(n-i+1)\Big)\epsilon } $ &
			${\ds \frac{1}{n-h+1}\Big(\sum_{i=h}^k(n-i+1) + n-t(\vu) \Big)\epsilon }$
			\vs\vs \\
			(b) & 
			${\ds \frac{1}{\ell}\Big(t(\vu) + \sum_{i=2}^{\ell}(n-i+1)\Big)\epsilon \geq \frac{t(\vu)}{\ell}\epsilon > \epsilon} $ 
			& ${\ds \frac{n-t(\vu)}{n-h+1}\epsilon < \epsilon}$ \vs\vs \\
			(c) 
			& ${\ds \frac{1}{\ell}\Big(t(\vu) + \sum_{i=2}^k(n-i+1)\Big)\epsilon \geq \frac{t(\vu)}{\ell}\epsilon > \epsilon }$ 
			& ${\ds \frac{n-t(\vu)}{n-h+1}\epsilon < \epsilon}$ \vs\vs \\
			(d) & 
			${\ds \frac{1}{\ell}\Big(t(\vu) + \sum_{i=2}^{\ell}(n-i+1)\Big)\epsilon \geq \frac{t(\vu)}{\ell}\epsilon > \epsilon} $ 
			& ${\ds \frac{n-h+1}{n-h+1}\epsilon = \epsilon}$ \vs\vs \\
			(e) & ${\ds \frac{1}{\ell}\Big(t(\vu) + \sum_{i=2}^k(n-i+1)\Big)\epsilon \geq \frac{t(\vu)}{\ell}\epsilon > \epsilon }$ 
			& ${\ds \frac{n-h+1}{n-h+1}\epsilon = \epsilon}$ \vs\vs \\ 
			(f) & ${\ds \frac{1}{\ell}\Big(t(\vu) + \sum_{i=2}^k (n-i+1) + \ell-t(\vu)\Big)\epsilon \geq \epsilon }$  
			& ${\ds \frac{n-h+1}{n-h+1}\epsilon = \epsilon}$ \vs\vs \\
			\hline
		\end{tabular}
\end{table}

\section*{Appendix 2}

In this Appendix, we derive the group-related SWFs $G_2(\vu),\ldots,G_m(\vu)$ and prove the relevant theorems.  We obtain the SWFs by treating the group members as individuals and applying the the SWFs $F_k(\vu)$ for individuals, with the assumption that all individuals in a group have the same utility.  

We begin by deriving $G_1(\vu)$.  Let $u'_{i'}$ be the utility of {\em individual} $i'$, and let $u_i$ be the utility of each individual in {\em group} $i$.  There are $n'$ individuals and $n$ groups.  Let $s_i$ be the size of group $i$, so that
\begin{equation}
n'=\sum_{i=1}^n s_i
\label{00}
\end{equation}
Then 
\[
F_1(\vu') = n' u'_{\langle 1\rangle} + (n'-1)\Delta
+ \hspace{-0.5ex} \sum_{i'=1}^{n'} (u'_{i'} - u'_{\langle 1\rangle} - \Delta)^+ 
\]
Since $u_{\langle 1\rangle}=u'_{\langle 1\rangle}$ and group $i$ has size $s_i$, we have
\[
G_1(\vu) = \Big(\sum_{i=1}^n s_i\Big)u_{\langle 1\rangle} + \Big( \sum_{i=1}^n s_i - 1\Big) \Delta + 
\sum_{i=1}^n s_i (u_i - u_{\langle 1\rangle} - \Delta)^+
\]
This is the formula used in \citeauthor{HooWil12} (\citeyear{HooWil12}).  

Hooker and Williams prove that (\ref{eq:MILPmodel1}) is a sharp representation of $P_1^{'}$, and  (\ref{eq:MILP1group-new}) a sharp representation of $P_1^{'}$ reformulated for groups.  We present a simpler proof of both theorems.  It is necessary only to prove the latter, because the former is a special case of it.

\medskip
{\em Proof of Theorems~\ref{th:sharp1} and~\ref{th:sharp1-Group}}.  
	We prove Theorem~\ref{th:sharp1-Group}, of which Theorem~\ref{th:sharp1} is a special case in which $s_i=1$ for each $i$.  It suffices to show that any inequality $z_1 \leq \va^T \vu + b$ that is valid for $P'_1$ is a surrogate (nonnegative linear combination) of inequalities in (\ref{eq:MILP1group-new}).  Let
	\[
	N = \sum_{i=1}^n s_i
	\]
	We first show that the following is a surrogate of (\ref{eq:MILP1group-new}) for any $i$:
	\begin{equation}
	z_1 \leq (N-1)\Delta  + \Big(s_i + (N-s_i)\frac{\Delta}{M}\Big)u_i  + \Big(1-\frac{\Delta}{M}\Big)\sum_{j\neq i} s_j u_j.
	\label{eq:proof10-g1sharp}
	\end{equation}
	We then show that $z_1 \geq \va^T \vu + b$ is a surrogate of the inequalities (\ref{eq:proof10-g1sharp}).  The theorem follows.
	
	To show that (\ref{eq:proof10-g1sharp}) is a surrogate of (\ref{eq:MILP1group-new}), we first note that the following is a linear combination of the upper bounds on $v_i$ in (\ref{eq:MILP1group-new}b) and (\ref{eq:MILP1group-new}c), using multipliers $1/\Delta$ and $1/(M-\Delta)$, respectively:
	\begin{equation}
	v_j \leq \frac{\Delta}{M}w + \Big(1 - \frac{\Delta}{M}\Big)u_j.
	\label{eq:proof11-g1sharp}
	\end{equation}
	We also have the following from (\ref{eq:MILP1group-new}b) and (\ref{eq:MILP1group-new}c):
	\begin{equation}
	v_i \leq u_i. \label{eq:proof12-1-g1sharp}
	\end{equation}
	\begin{equation}
	w \leq v_i. \label{eq:proof12-g1sharp}
	\end{equation}
	We now obtain the following, for any given $i$ and $j$, as a linear combination of (\ref{eq:proof11-g1sharp}) and (\ref{eq:proof12-g1sharp}), using multipliers 1 and $\Delta/M$, respectively:
	\begin{equation}
	v_j \leq \frac{\Delta}{M}v_i + \Big(1 - \frac{\Delta}{M}\Big)u_j.
	\label{eq:proof13-g1sharp}
	\end{equation}
	Finally, we obtain (\ref{eq:proof10-g1sharp}) for any given $i$ by summing (\ref{eq:MILP1group-new}a) with multiplier 1, (\ref{eq:proof12-1-g1sharp}) with multiplier 
	\[
	s_i+(N-s_i)\frac{\Delta}{M}
	\]
	and (\ref{eq:proof13-g1sharp}) over all $j\neq i$ with multiplier $s_j$.
	
	It remains to show that $z_1 \leq \bm{a}^T\vu+b$ is a surrogate of (\ref{eq:proof10-g1sharp}) for $i=1,\ldots, n$.  We first observe that $(\vu,z)=(\vec{0},(N-1)\Delta)$ is feasible in $P^{'}_1$ and must therefore satisfy $z_1 \leq \bm{a}^T \vu+b$, which implies $b\geq (N-1)\Delta$.  We can assume without loss of generality that $b=(N-1)\Delta$, since otherwise we can add an appropriate multiple of the valid inequality $0\leq b$ to obtain the desired inequality $z_1 \leq \bm{a}^T \vu+b$.  We also note that $(\vu,z)=(M,\ldots,M,NM+(N-1)\Delta)$ is feasible and must satisfy $z_1\leq \va^T \vu+(N-1)\Delta$, which means 
	\begin{equation}
	\sum_{j=1}^m a_j \geq N
	    \label{eq:proof19-g1sharp}
	\end{equation}
	Finally, we note that $(\vu,z)=(M\bm{e}_i,(N-1)\Delta + s_i(M - \Delta))$ is feasible for $P^{'}_1$, where $\bm{e}_i$ is the $i$th unit vector.  Substituting this into $z_1\leq \bm{a}^T \vu+(N-1)\Delta$, we obtain
	\begin{equation}
	a_i \geq \Big(1 - \frac{\Delta}{M}\Big)s_i
	\label{eq:proof14-g1sharp}
	\end{equation}
	Due to (\ref{eq:proof19-g1sharp}), we can suppose without loss of generality that $\sum_{j=1}^m a_j = N$, since otherwise we can add appropriate multiples of the valid inequalities $0\leq a_j$ to obtain $z_1 \leq \bm{a}^T \vu+b$.  
	
	To obtain $z\leq \bm{a}^T \vu+b$ as a surrogate of (\ref{eq:proof10-g1sharp}), we sum (\ref{eq:proof10-g1sharp}) over all $j$ using the multipliers
	\begin{equation}
	\alpha_i = \frac{M}{N\Delta} \Big( a_i - \Big(1 - \frac{\Delta}{M}\Big)s_i \Big)
	\label{eq:proof15-g1sharp}
	\end{equation}
	for each $i$.  It is easily checked that $\sum_{i=1}^m \alpha_i = 1$, so that the linear combination has the form 
	\begin{equation}
	z\leq \bm{d}^T \vu + (N-1)\Delta
	\label{eq:proof90}
	\end{equation}
	We wish to show that $\bm{d}=\bm{a}$. Note that
	\[
	d_i = \Big(s_i + N\frac{\Delta}{M} \Big) \alpha_i + \Big( 1 - \frac{\Delta}{M}\Big) s_i\sum_{j\neq i} \alpha_j
	\]
	Using the fact that $\sum_{j=1}^m \alpha_j = 1$, this becomes
	\[
	d_i = N\frac{\Delta}{M}\alpha_i + \Big(1-\frac{\Delta}{M}\Big) s_i
	\]
	which immediately reduces to $d_i=a_i$.  We conclude that (\ref{eq:proof90}) is a linear combination of the inequalities (\ref{eq:proof10-g1sharp}) using multipliers $\alpha_i$.  It remains to show that each $\alpha_i$ is nonnegative, but this follows from (\ref{eq:proof14-g1sharp}) and (\ref{eq:proof15-g1sharp}).  $\Box$
\medskip

We now derive $\bar{G}_k(\vu)$ for $k\geq 2$.  Recall that the SWF for individuals is
\begin{equation}
\bar{F}_{k'}(\vu') = (n'-k'+1)\min\{u'_{\langle 1\rangle}+\Delta, u'_{\langle k'\rangle}\}
+ \sum_{i'=k'}^{n'} \hspace{-0.5ex} (u'_{\langle i'\rangle} - \bar{u}'_{\langle 1\rangle} - \Delta)^+ 
\label{1}
\end{equation}
To obtain $\bar{G}_k(\vu)$, we again assume the individuals in each group $i$ have the same utility $u_i$.   The first individual in (\ref{1}) that belongs to group $k$ is individual
\begin{equation}
k' = 1 + \sum_{j=1}^{k-1} s_{i_j}
\label{0}
\end{equation}
Due to (\ref{00}) and (\ref{0}), the first term on the RHS of (\ref{1}) is 
\[
\Big( n' - 1 - \sum_{j=1}^{k-1}s_{i_j} + 1 \Big)
\min\{u_{\langle k\rangle} + \Delta, u_{\langle k\rangle}\}
= \Big( \sum_{i=k}^n s_{\langle i\rangle} \Big) 
\min\{u_{\langle k\rangle} + \Delta, u_{\langle k\rangle}\}
\]
since all the utilities in a group are the same.  
Thus we have
\begin{equation}
\bar{G}_k(\vu) =
\Big( \sum_{i=k}^n s_{\langle i\rangle} \Big)
\min\{u_{\langle 1\rangle} + \Delta, u_{\langle k\rangle}\}
+ \sum_{i=k}^n s_{\langle i\rangle}(u_{\langle i\rangle} - u_{\langle 1\rangle} - \Delta)^+
\label{eq:groupk-2}
\end{equation}
We show as follows that $\bar{G}_k(\vu)$ is continuous in $u_{\langle k \rangle}, \ldots, u_{\langle n \rangle}$.
\medskip

{\em Proof of Theorem~\ref{th:continuity}.}
    It suffices to show each term of (\ref{eq:groupk-2}) is a continuous function of $u_{\langle k \rangle}, \ldots, u_{\langle n \rangle}$, with $u_{\langle 1 \rangle}, \ldots, u_{\langle k-1 \rangle}$ and the corresponding group sizes $s_{\langle 1 \rangle}, \ldots, s_{\langle k-1 \rangle}$ fixed.  The first term is continuous because it is equal to a constant time the maximum of order statistics $u_{\langle 1\rangle}$ and $u_{\langle k\rangle}$, which are continuous functions of $\vu$.  Similarly, each term of the summation is a constant times the maximum of a continuous expression and zero.
$\Box$
\medskip

We can now establish that the MILP model (\ref{eq:MILPkgroup-new}) is correct.

\medskip
{\em Proof of Theorem~\ref{th:MILPGroup}.}
We first show that given any $(\vu,z_k)$ that is feasible for (\ref{eq:seq2}), where $u_{i_j}=\bar{u}_{i_j}$ for $j=1,\ldots, k-1$, there exist $\bm{v}, \bm{\delta},\bm{\epsilon},w,\sigma$ for which $(\vu,z_k,\bm{v},\bm{\delta},\bm{\epsilon},w,\sigma)$ is feasible for (\ref{eq:MILPkgroup-new}).  Constraint (\ref{eq:MILPkgroup-new}$j$) follows directly from (\ref{eq:seq2}$c$).  To satisfy the remaining constraints in (\ref{eq:MILPkgroup-new}), we assign values to $\bm{v}, \bm{\delta},\bm{\epsilon},w,\sigma$ as in (\ref{eq:assign}),
where $\kappa$ is an arbitrarily chosen index in $I_k$ such that $u_{\kappa}=u_{\langle k\rangle}$.
It is easily checked that these assignments satisfy constraints (\ref{eq:MILPkgroup-new}$b$)--(\ref{eq:MILPkgroup-new}$h$).  They satisfy (\ref{eq:MILPkgroup-new}$i$) because (\ref{eq:seq2}$b$) implies that $u_{\kappa}\geq \bar{u}_{i_{k-1}}$.  To show they satisfy (\ref{eq:MILPkgroup-new}$a$), we note that (\ref{eq:MILPkgroup-new}$a$) is implied by (\ref{eq:seq2}$a$) because  $\min\{\bar{u}_{i_1}+\Delta,u_{\kappa}\}\leq \sigma$ and $(u_i-\bar{u}_{i_1}-\Delta)^+\leq v_i$ for $i\in I_k$.  Since (\ref{eq:seq2}$a$) is satisfied by $(\vu,z)$, it follows that (\ref{eq:MILPkgroup-new}$a$) is satisfied by (\ref{eq:assign}).

For the converse, we show that for any $(\vu,z_k,\bm{v},\bm{\delta},\bm{\epsilon},w,\sigma)$ that satisfies (\ref{eq:MILPkgroup-new}), $(\vu,z_k)$ satisfies (\ref{eq:seq2}). Constraint (\ref{eq:seq2}$b$) follows from (\ref{eq:MILPmodel}$f$) and (\ref{eq:MILPmodel}$i$), and (\ref{eq:seq2}$c$) is identical to (\ref{eq:MILPkgroup-new}$j$).  To verify that (\ref{eq:seq2}$a$) is satisfied, we let $\kappa$ be the index for which $\epsilon_{\kappa}=1$, which is unique due to (\ref{eq:MILPkgroup-new}$g$).  It suffices to show that (\ref{eq:MILPkgroup-new}$a$) implies (\ref{eq:seq2}$a$) when the remaining constraints of (\ref{eq:MILPkgroup-new}) are satisfied.  For this it suffices to show that 
\begin{equation}
\sigma \leq \min\{\bar{u}_{i_1}+\Delta, u_{\kappa}\}
\label{eq:proof1group}
\end{equation}
\begin{equation}
v_i \leq (u_i - \bar{u}_{i_1} - \Delta)^+, \; i\in I_k
\label{eq:proof2group}
\end{equation}
(\ref{eq:proof1group}) follows from ($d$), ($e$), and ($f$) of (\ref{eq:MILPkgroup-new}).  (\ref{eq:proof2group}) follows from ($b$) and ($c$) of (\ref{eq:MILPkgroup-new}).  This proves the theorem.
$\Box$
\medskip

Finally, we show that (\ref{eq:valid0-group})--(\ref{eq:valid-group}) are valid inequalities for the group version of $P'_k$ for $k\geq 2$.

\medskip
{\em Proof of Theorem~\ref{th:valid-group}.}  
It suffices to show that for any $(\vu,z_k,\bm{v},\bm{\delta},\bm{\epsilon},w)$ that satisfies (\ref{eq:MILPkgroup-new}), where $u_{i_j}=\bar{u}_{i_j}$ for \mbox{$j=1,\ldots,k-1$}, the vector $\vu$ satisfies (\ref{eq:valid0}) and (\ref{eq:valid}).  Since we know from Theorem~\ref{th:MILPGroup} that $\vu$ is feasible in (\ref{eq:seq2-group}), it suffices to show that (\ref{eq:seq2-group}) implies (\ref{eq:valid0-group}) and (\ref{eq:valid-group}).  To derive (\ref{eq:valid0-group}), we write (\ref{eq:seq2-group}$a$) as
\begin{equation}
z_k \leq \sum_{i\in I_k} s_i\Big( \min\{\bar{u}_{i_1}+\Delta, u_{\langle k\rangle}\}
+(u_i - \bar{u}_{i_1} - \Delta)^+ \Big)
\end{equation}
For any term $i$ in the summation, we consider two cases.  If $u_i\leq \bar{u}_{i_1}+\Delta$, then $u_{\langle k\rangle} \leq \bar{u}_{i_1}+\Delta$ (because $u_{\langle k\rangle}\leq u_i$), and the term reduces to $s_iu_{\langle k\rangle}$.  If $u_i> \bar{u}_{i_1}+\Delta$, term $i$ becomes
\[
s_i\Big(\min\{\bar{u}_{i_1}+\Delta, u_{\langle k\rangle}\}
+(u_i - \bar{u}_{i_1} - \Delta)\Big)
= s_i\Big(\min\{0, u_{\langle k\rangle} - \bar{u}_{i_1} - \Delta\} + u_i\Big) \leq s_iu_i
\]
In either case, term $i$ is less than or equal to $u_i$, and (\ref{eq:valid0-group}) follows.

To establish (\ref{eq:valid-group}), it is enough to show that (\ref{eq:valid-group}) is implied by (\ref{eq:seq2-group}) for each $i\in I_k$.  We consider the same two cases as before.

Case 1: $u_i-\bar{u}_{i_1}\leq\Delta$, which implies $u_{\langle k\rangle}-\bar{u}_{i_1}\leq\Delta$.  Since $\vu$ satisfies (\ref{eq:seq2-group}$a$), we have
\begin{equation}
z_k \leq \Big(\sum_{j\in I_k} s_j\Big)u_{\langle k\rangle} 
+ \hspace{-2ex} \sum_{\substack{j\in I_k\setminus\{i\}\\u_j-\bar{u}_{i_1}>\Delta}} \hspace{-2.5ex} s_j(u_j-\bar{u}_{i_1}-\Delta)
\label{eq:proof040-group}
\end{equation}
It suffices to show that this implies 
\begin{equation}
z_k \leq \Big(\sum_{j\in I_k} s_j\Big)u_i +
\beta \Big( \hspace{-2ex} \sum_{\substack{j\in I_k\setminus\{i\}\\u_j-\bar{u}_{i_1}\leq\Delta}} \hspace{-2.5ex} s_j(u_j-\bar{u}_{i_{k-1}})
\hspace{0.5ex} + \hspace{-2ex} \sum_{\substack{j\in I_k\setminus\{i\}\\u_j-\bar{u}_{i_1}>\Delta}} \hspace{-2.5ex} s_j(u_j-\bar{u}_{i_{k-1}}) \Big),
\label{eq:proof041-group}
\end{equation}
because (\ref{eq:proof041-group}) is equivalent to the desired inequality (\ref{eq:valid-group}).  But (\ref{eq:proof040-group}) implies (\ref{eq:proof041-group}) because $u_{\langle k\rangle} \leq u_i$ by definition of $u_{\langle k\rangle}$, $u_j-\bar{u}_{i_{k-1}}\geq 0$ for all $j\in I_k$ due to (\ref{eq:seq2-group}$b$), and (\ref{eq:proof40}) for all $j\in I_k$. 
 
Case 2: $u_i-\bar{u}_{i_1}> \Delta$.  It again suffices to show that (\ref{eq:seq2-group}) implies (\ref{eq:proof041-group}).  Due to the case hypothesis, we have from (\ref{eq:seq2-group}$a$) that 
\[
z_k \leq \Big(\sum_{j\in I_k} s_j\Big)\min\{\bar{u}_1+\Delta,u_{\langle k\rangle}\}
+ s_i(u_i-\bar{u}_{i_1}-\Delta) 
+ \hspace{-2.5ex} \sum_{\substack{j\in I_k\setminus\{i\}\\u_j-\bar{u}_{i_1}>\Delta}} \hspace{-2.5ex} s_j(u_j-\bar{u}_{i_1}-\Delta)
\]
This can be written
\[
z_k \leq \Big(\sum_{j\in I_k} s_j\Big)u_i 
- \Big(\sum_{j\in I_k} s_j\Big)\Big( u_i - \min\{\bar{u}_1+\Delta,u_{\langle k\rangle}\} \Big)
+ s_i(u_i-\bar{u}_{i_1}-\Delta) 
+ \hspace{-2.5ex} \sum_{\substack{j\in I_k\setminus\{i\}\\u_j-\bar{u}_{i_1}>\Delta}} \hspace{-2.5ex} s_j(u_j-\bar{u}_{i_1}-\Delta)
\]
which can be written
\begin{equation}
z_k \leq \Big(\sum_{j\in I_k} s_j\Big)u_i 
- \Big( \hspace{-2ex} \sum_{j\in I_k\setminus\{i\}} \ \hspace{-3ex} s_j\Big)\Big( u_i - \min\{\bar{u}_1+\Delta,u_{\langle k\rangle}\} \Big)
- s_i\Big( \bar{u}_1 + \Delta - \min\{\bar{u}_1+\Delta,u_{\langle k\rangle} \} \Big)
+ \hspace{-2.5ex} \sum_{\substack{j\in I_k\setminus\{i\}\\u_j-\bar{u}_{i_1}>\Delta}} \hspace{-2.5ex} s_j(u_j-\bar{u}_{i_1}-\Delta)
\label{eq:proof45-group}
\end{equation}
The second term is nonpositive because $u_i>\bar{u}_1+\Delta$ by the case hypothesis, and $u_i\geq u_{\langle k\rangle}$.  The third term is clearly nonpositive.  Thus (\ref{eq:proof45-group}) implies (\ref{eq:proof041-group}) because $u_j-\bar{u}_{i_{k-1}}\geq 0$ and (\ref{eq:proof401}) holds for $j\in I_k$ as before. $\Box$

\end{document}